\documentclass[12pt,reqno]{amsart}
\usepackage{amsmath,amssymb,extarrows}
\usepackage{url}
\usepackage{tikz,enumerate}
\usepackage{diagbox}
\usepackage{appendix}
\usepackage{epic}

\usepackage{float} %support many floats

\vfuzz2pt % Don't report small over-full v-boxes

\usepackage{cite}

\usepackage{hyperref}
\usepackage{array}

\usepackage{booktabs}

\setlength{\topmargin}{-3mm}

\setlength{\oddsidemargin}{0.2in}
\setlength{\evensidemargin}{0.2in}
\setlength{\textwidth}{5.9in}
\setlength{\textheight}{8.9in}
\allowdisplaybreaks[4]

% THEOREM Environments ------------------------------------
\newtheorem{theorem}{Theorem}
\newtheorem{corollary}[theorem]{Corollary}

\newtheorem{lemma}[theorem]{Lemma}

\theoremstyle{definition}

\theoremstyle{remark}
\newtheorem{rem}{Remark}
\numberwithin{equation}{section}
\numberwithin{theorem}{section}
%\numberwithin{lemma}{section}
\numberwithin{defn}{section}
%\numberwithin{corollary}{section}
% MATH ----------------------------------------------------

\DeclareMathOperator{\diag}{diag}

% -----------------------------------------------------------

\begin{document}
\title[Mizuno's Rank Three Nahm Sums \uppercase\expandafter{\romannumeral1}]{Mizuno's rank three Nahm sums \uppercase\expandafter{\romannumeral1}: identities of index $(1,1,2)$}

\author{Boxue Wang and Liuquan Wang}
\address[B. Wang]{School of Mathematics and Statistics, Wuhan University, Wuhan 430072, Hubei, People's Republic of China}
\email{boxwang@whu.edu.cn}
\address[L. Wang]{School of Mathematics and Statistics, Wuhan University, Wuhan 430072, Hubei, People's Republic of China}
\email{wanglq@whu.edu.cn;mathlqwang@163.com}

\subjclass[2010]{11P84, 33D15, 33D60, 11F03}

\keywords{Rogers-Ramanujan type identities; Nahm sums; sum-product identities; Bailey pairs}

\begin{abstract}
Mizuno provided 19 examples of generalized rank three Nahm sums with symmetrizer $\diag(1,1,2)$ which are conjecturally modular. We confirm their modularity by establishing Rogers--Ramanujan type identities of index $(1,1,2)$ for these examples. We first reduce these Nahm sums to some double sums or single sums, and then we use known results or apply the theory of Bailey pairs to prove the desired identities. Meanwhile, we generalize some triple sum identities to general multi-sum identities.
\end{abstract}

\maketitle

\section{Introduction}\label{sec-intro}
The famous Rogers--Ramanujan identities assert that
\begin{align}
    \sum_{n=0}^\infty\frac{q^{n^2}}{(1-q)(1-q^2)\cdots (1-q^n)}
    =\prod\limits_{n=0}^\infty \frac{1}{(1-q^{5n+1})(1-q^{5n+4})}, \label{R.R.1}
    \\
    \sum_{n=0}^\infty\frac{q^{n^2+n}}{(1-q)(1-q^2)\cdots (1-q^n)}
    =\prod\limits_{n=0}^\infty \frac{1}{(1-q^{5n+2})(1-q^{5n+3})}. \label{R.R.2}
\end{align}
These identities were first proved by Rogers \cite{Rogers1894} and later rediscovered by Ramanujan. They inspired lots of works on finding similar sum-product identities which were usually called as Rogers-Ramanujan type identities.

For convenience, from now on we shall use standard $q$-series notation:
\begin{align}
   & (a;q)_\infty:=\prod\limits_{k=0}^\infty (1-aq^k), \quad |q|<1, \label{aq-infinite-defn}\\
   & (a;q)_n:=\frac{(a;q)_\infty}{(aq^n;q)_\infty}, \quad n\in \mathbb{R}. \label{aq-defn}
\end{align}
We will frequently use compressed notation:
\begin{align}
    (a_1,a_2,\dots,a_m;q)_n:=(a_1;q)_n(a_2;q)_n\cdots (a_m;q)_n, \quad n \in \mathbb{N}\cup \{\infty\}.
\end{align}

As a multi-sum generalization of \eqref{R.R.1} and \eqref{R.R.2}, the Andrews--Gordon identity (see \cite{Andrews1974,Gordon1961}) states that for integer $k>2$ and $1\leq i \leq k$,
\begin{align}
&\sum_{n_1,\dots,n_{k-1}\geq 0} \frac{q^{N_1^2+\cdots+N_{k-1}^2+N_i+\cdots +N_{k-1}}}{(q;q)_{n_1}\cdots (q;q)_{n_{k-2}} (q;q)_{n_{k-1}}} &=\frac{(q^i,q^{2k+1-i},q^{2k+1};q^{2k+1})_\infty}{(q;q)_\infty},\label{AG}
\end{align}
where $N_j=n_j+\cdots+n_{k-1}$ if $j\leq k-1$ and $N_k=0$.  As the even moduli companion of it, Bressoud's identity \cite{Bressoud1979} asserts that for integers $k\geq 2$ and $1\leq i \leq k$,
\begin{align}\label{eq-Bressoud}
\sum_{n_1,\dots,n_{k-1}\geq 0} \frac{q^{N_1^2+\cdots+N_{k-1}^2+N_i+\cdots +N_{k-1}}}{(q;q)_{n_1}\cdots (q;q)_{n_{k-2}} (q^2;q^2)_{n_{k-1}}} =\frac{(q^i,q^{2k-i},q^{2k};q^{2k})_\infty}{(q;q)_\infty}
\end{align}
where $N_j$ is defined as before.

For convenience, we shall adopt a notion from \cite{Wang2021}. Let $n_1,\ldots,n_k$ be positive integers whose greatest common divisor equals 1. Let $t(i_1,\ldots,i_k)$ denote some integer-valued functions, and let $Q(i_1,\dots,i_k)$ be a rational polynomial in variables $i_1,\dots,i_k$. If the sum side of a Rogers--Ramanujan type identity is a mixed sum of
$$\sum_{(i_1,\dots,i_k)\in \mathbb{N}^k}\frac{(-1)^{t(i_1,\dots,i_k)}q^{Q(i_1,\dots,i_k)}}{(q^{n_1};q^{n_1})_{i_1}\cdots (q^{n_k};q^{n_k})_{i_k}},$$
then we call it as an identity of index $(n_1,\dots,n_k)$. Using this notion, we see that the Andrews--Gordon identity is of index $(1,1,\dots,1)$ and the Bressoud identity is of index $(1,\dots, 1,2)$. In the past decade, a number of multi-sum Rogers--Ramanujan type identities have been discovered. For example, Kanade and Russell \cite{KR2015,KR2019} conjectured some identities of indexes $(1,2,3)$ and $(1,4,6)$. Most of them have been confirmed by Bringmann, Jennings-Shaffer and Mahlburg \cite{BJM} and Rosengren \cite{Rosengren}. Cao and Wang \cite{CW} proved some identities of indexes
$$(1, 1),(1, 2),(1, 1, 1),(1, 1, 2),(1, 1, 3),(1, 2, 2),(1, 2, 3),(1, 2, 4).$$
Li and Wang \cite{LW} found many identities of indexes
$$(1,1),(1,2),(1,4),(1,1,1),(1,1,3),(1,2,2),(1,2,4),(1,1,1,2).$$
See the references therein for more related works.

Rogers--Ramanujan type identities have important implications in various areas of mathematics such as combinatorics, Lie theory and number theory. We refer the reader to Sills' book \cite{Sills-book} for more detailed introduction. Here we focus on the relations between them and modular forms.
As is well known, these identities serve as one of the bridges linking the theory of $q$-series and modular forms. The product sides of \eqref{R.R.1} and \eqref{R.R.2} are clearly modular forms, but this fact is not easy to be observed from the sum sides. A key problem arises here: what kinds of $q$-hypergeometric series are modular forms?  In this aspect, Nahm \cite{Nahm1994,Nahmconf,Nahm2007} considered a particular class of $q$-hypergeometric series:
$$f_{A,b,c}(q):=\sum_{n=(n_1,\dots,n_r)^\mathrm{T} \in (\mathbb{Z}_{\geq 0})^r} \frac{q^{\frac{1}{2}n^\mathrm{T} An+n^\mathrm{T} b+c}}{(q;q)_{n_1}\cdots (q;q)_{n_r}},$$
where $r\geq 1$ is a positive integer, $A$ is a real positive definite symmetric $r\times r$ matrix, $b$ is a vector of length $r$, and $c$ is a scalar. The series $f_{A,b,c}(q)$ is later referred as Nahm series. Nahm's problem is to determine all $(A,b,c)$ with rational entries such that $f_{A,b,c}(q)$ is modular. For convention, we shall call such $(A,b,c)$ as a rank $r$ modular triple and call $A$ as its matrix part.

Nahm  made a conjecture which provides a sufficient and necessary condition on $A$ so that it is the matrix part of a modular triple. This conjecture is known to be false in general but one direction has been confirmed recently by works of Calegari, Garoufalidis and Zagier \cite{CGZ}. In 2007, Zagier systematically investigated Nahm's problem. He proved that there are exactly seven modular triples in the rank one case. Zagier also provided two lists of possible modular triples in the rank two and rank three cases, respectively. The modularity of these triples have now all been confirmed by works of Zagier \cite{Zagier}, Vlasenko and Zwegers \cite{VZ}, Cherednik and Feigin \cite{Feigin}, Wang \cite{Wang-rank2,Wang-rank3}, Cao, Rosengren and Wang \cite{CRW}, etc.

In 2023, Mizuno considered a more general class of $q$-hypergeometric series which we shall still call as (generalized) Nahm sums.  Suppose $d=(d_1,\dots,d_r)\in \mathbb{Z}_{>0}^r$,   $b \in \mathbb{Q}^r$ is a vector and  $c \in \mathbb{Q}$ is a scalar. We call $A \in \mathbb{Q}^{r \times r} $ a symmetrizable matrix with the symmetrizer $D := \mathrm{diag}(d_1,\dots, d_r)$ if $AD$ is symmetric positive definite. Mizuno defined   \begin{align}
   \widetilde{f}_{A,b,c,d}(q):= \sum_{n=(n_1,\dots,n_r)^\mathrm{T} \in (\mathbb{Z}_{\geq 0})^r} \frac{q^{\frac{1}{2}n^\mathrm{T}ADn+n^\mathrm{T}b+c}}{(q^{d_1};q^{d_1})_{n_1}\cdots (q^{d_r};q^{d_r})_{n_r}}.
\end{align}
In particular, when $d_1=d_2=\cdots=d_r=1$, this is the original series considered by Nahm. As before, we shall call $(A,b,c,d)$ as a rank $r$ modular quadruple if $f_{A,b,c,d}(q)$ is modular.

Following the numerical method used by Zagier \cite{Zagier}, Mizuno \cite{Mizuno} searched for possible modular quadruples of rank 2 or 3. In the rank two case, Mizuno \cite[Table 1]{Mizuno} provided a list of 14 possible modular quadruples and verified the modularity of four of them. The authors \cite{WW} confirmed the modularity of eight  modular quadruples and provided conjectural identities for the remaining two examples.

In Tables 2 and 3 of \cite{Mizuno}, Mizuno provided 19 and 15 possible rank three modular quadruples associated with the symmetrizer $d=(1,1,2)$ and $d=(1,2,2)$, respectively. In this sequel of papers, we will discuss these examples and confirm their modularity. In particular, this paper discuss the examples in Table 2. We will prove that all the Nahm sums associated with the quadruples in \cite[Table 2]{Mizuno} are indeed modular. The examples in Table 3 will be discussed in a separate paper.

For convenience, we label the examples from 1 to 19 according to their order of appearances in \cite[Table 2]{Mizuno}. Each example consists of several quadruples with the common matrix part. To prove the modularity of the corresponding Nahm sums, we will establish Rogers--Ramanujan type identities for them. That is, for those quadruples $(A,b,c,d)$ in Table 2, we establish identities of index $(1,1,2)$ for each of the series $f_{A,b,c,d}(q)$. The product sides of these identities clearly reveal the modularity of these Nahm sums. For instance, Example 13 of Table 2 corresponds to
\begin{align*}
A=\begin{pmatrix}
1 & 0 & 1/2\\
0 & 2 & 1\\
1 & 2 & 2
\end{pmatrix},
\quad
b\in  \bigg\{
\begin{pmatrix}
0 \\ 0 \\ 0
\end{pmatrix},
\begin{pmatrix}
0 \\ 0 \\ 1
\end{pmatrix},
\begin{pmatrix}
0 \\ 1 \\ 2
\end{pmatrix},
\begin{pmatrix}
1 \\ 1 \\ 2
\end{pmatrix}
\bigg\}.
\end{align*}
We find the modular product expressions for the corresponding Nahm sums:
\begin{align}
    \sum_{i,j,k\ge 0}\frac{q^{i^2+2j^2+4k^2+2ik+4jk}}{(q^2;q^2)_i(q^2;q^2)_j(q^4;q^4)_k}
    &=
    \frac{(q^5,q^6,q^{11};q^{11})_\infty}{(q;q^2)_\infty(q^4;q^4)_\infty},\label{intro-table2.13.1}
    \\
    \sum_{i,j,k\ge 0}\frac{q^{i^2+2j^2+4k^2+2ik+4jk+2k}}{(q^2;q^2)_i(q^2;q^2)_j(q^4;q^4)_k}
    &=
    \frac{(q^4,q^7,q^{11};q^{11})_\infty}{(q;q^2)_\infty(q^4;q^4)_\infty},\label{intro-table2.13.2}
    \\\sum_{i,j,k\ge 0}\frac{q^{i^2+2j^2+4k^2+2ik+4jk+2j+4k}}{(q^2;q^2)_i(q^2;q^2)_j(q^4;q^4)_k}
    &=
    \frac{(q^2,q^9,q^{11};q^{11})_\infty}{(q;q^2)_\infty(q^4;q^4)_\infty},\label{intro-table2.13.3}
    \\\sum_{i,j,k\ge 0}\frac{q^{i^2+2j^2+4k^2+2ik+4jk+2i+2j+4k}}{(q^2;q^2)_i(q^2;q^2)_j(q^4;q^4)_k}
    &=
    \frac{(q,q^{10},q^{11};q^{11})_\infty}{(q;q^2)_\infty(q^4;q^4)_\infty}.\label{intro-table2.13.4}
\end{align}
The right side of these identities indicates that there might exist an identity with the numerator $(q^3,q^8,q^{11};q^{11})_\infty$. We find an identity for this missing case:
\begin{align}
    \sum_{i,j,k\ge 0}\frac{q^{i^2+2j^2+4k^2+2ik+4jk+2j+2k}(1+q^{2i+2j+4k+2})}{(q^2;q^2)_i(q^2;q^2)_j(q^4;q^4)_k}
    =
    \frac{(q^3,q^8,q^{11};q^{11})_\infty}{(q;q^2)_\infty(q^4;q^4)_\infty}.
    \end{align}

For most of the examples, we first establish the corresponding identities by reducing triple sums to double or single sums, and then we employ some known Rogers--Ramanujan type identities. In fact, except for Examples 11, 12, 13 and 15, all other 15 examples can be reduced to the rank two examples considered by Zagier \cite{Zagier}, and then we can use identities from the work of Wang \cite{Wang-rank2}. Example 11 corresponds to Bressoud's identity \eqref{eq-Bressoud} and its companion identity (see \eqref{eq-Bailey-general-id}). To establish identities for Examples 12 and 13, we use the machinery of Bailey pairs. Example 15 can be reduced to the Rogers--Ramanujan identities \eqref{R.R.1}--\eqref{R.R.2}.

Meanwhile, we find generalizations for many identities involved in Mizuno's examples. The key step in such generalization is to find some multi-sum Rogers--Ramanujan type identities. Some of them are known but occasionally we obtain some apparently new results. For example, to generalize the identities \eqref{intro-table2.13.1}--\eqref{intro-table2.13.3}, we first establish the following multi-sum identities.
\begin{theorem}\label{thm-gen-13-original}
Let $N_i=n_i+n_{i+1}+\cdots+n_k$ ($1\leq i\leq k$) and $N_i=0$ for $i>k$. For $1\leq i \leq k+1$ we have
\begin{align}
\sum_{n_1,\dots,n_k\ge 0} \frac{q^{N_1^2+N_2^2+\cdots+N_k^2+N_i+N_{i+1}+\cdots+N_k}}{(q;q)_{n_1}\cdots(q;q)_{n_{k-1}}(-q^{\frac{1}{2}};q)_{n_k}(q^2;q^2)_{n_k}}
 =
    \frac{(q^{i},q^{\frac{3}{2}+2k-i},q^{\frac{3}{2}+2k};q^{\frac{3}{2}+2k})_\infty}{(q;q)_\infty}.
\end{align}
\end{theorem}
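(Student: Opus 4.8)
The plan is to prove this by the Bailey chain, mirroring the classical derivation of the Andrews--Gordon identity \eqref{AG} and of Bressoud's identity \eqref{eq-Bressoud}. The point is that the outer structure $q^{N_1^2+\cdots+N_k^2}$ together with the denominators $(q;q)_{n_1}\cdots(q;q)_{n_{k-1}}$ is exactly what the iterated Bailey lemma produces, so the entire novelty of the theorem — the anomalous last factor $(-q^{1/2};q)_{n_k}(q^2;q^2)_{n_k}$ — must be packaged into a single seed Bailey pair attached to the innermost variable $n_k$.

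Recall that $(\alpha_n,\beta_n)$ is a Bailey pair relative to $a=1$ if $\beta_n=\sum_{r=0}^{n}\alpha_r/\big((q;q)_{n-r}(q;q)_{n+r}\big)$. First I would isolate $n_k$ and read off the seed. Using $(q;q^2)_n(q^2;q^2)_n=(q;q)_{2n}$ and $(q^{1/2};q)_n(-q^{1/2};q)_n=(q;q^2)_n$, the last factor simplifies as $1/\big((-q^{1/2};q)_n(q^2;q^2)_n\big)=(q^{1/2};q)_n/(q;q)_{2n}$, which singles out the candidate
\[ \beta_n=\frac{(q^{1/2};q)_n}{(q;q)_{2n}},\qquad \alpha_0=1,\quad \alpha_n=(-1)^n\big(q^{(3n^2-n)/4}+q^{(3n^2+n)/4}\big)\ \ (n\ge 1). \]

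The crux — and the step I expect to be the main obstacle — is verifying that this pair is genuinely a Bailey pair. Writing $\alpha_r/\big((q;q)_{n-r}(q;q)_{n+r}\big)=\alpha_r\begin{bmatrix}2n\\ n-r\end{bmatrix}_q\big/(q;q)_{2n}$ and folding the two pentagonal-type pieces of $\alpha_r$ into one bilateral sum (via $\begin{bmatrix}2n\\ n-r\end{bmatrix}_q=\begin{bmatrix}2n\\ n+r\end{bmatrix}_q$), the verification collapses to the finite, Schur-type form of the Jacobi triple product
\[ \sum_{r=-\infty}^{\infty}(-1)^{r}q^{(3r^2+r)/4}\begin{bmatrix}2n\\ n+r\end{bmatrix}_q=(q^{1/2};q)_n . \]
I would prove this by induction on $n$ using the $q$-Pascal recurrence $\begin{bmatrix}2n\\ n+r\end{bmatrix}_q=\begin{bmatrix}2n-1\\ n+r-1\end{bmatrix}_q+q^{\,n+r}\begin{bmatrix}2n-1\\ n+r\end{bmatrix}_q$ (or by $q$-Zeilberger); the half-integer and quarter-integer powers demand careful bookkeeping but present no essential difficulty, and the identity is readily confirmed in the small cases $n=0,1,2$.

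With the seed in hand the remainder is routine. Iterating Bailey's lemma $k-1$ times attaches $q^{N_1^2+\cdots+N_{k-1}^2}$ and the denominators $(q;q)_{n_1}\cdots(q;q)_{n_{k-1}}$ to $\beta_{n_k}$, the parameter $i$ is produced by the standard insertion of the linear factor $q^{N_i+\cdots+N_k}$ exactly as in the proof of \eqref{AG}, and the terminal large-$n$ limit of Bailey's lemma reduces the whole left side to $\frac{1}{(q;q)_\infty}$ times a single theta-type sum in $\alpha_n$. Evaluating that sum by the Jacobi triple product — with base $q^{3/2+2k}$ and linear coefficient moving from $q^{k+1}$ (the $i=k+1$, no-linear-term case) down to $q^{i}$ as the linear factor is inserted — yields the product $(q^{i},q^{3/2+2k-i},q^{3/2+2k};q^{3/2+2k})_\infty$, completing the proof. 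As a consistency check, the $i=k+1$ case already follows from the seed after a single pass of the chain, and one verifies directly that $\sum_{n\ge0}q^{kn^2}\alpha_n=(q^{k+1},q^{k+1/2},q^{2k+3/2};q^{2k+3/2})_\infty$.
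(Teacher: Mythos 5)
Your overall architecture --- absorb the anomalous factor $1/\big((-q^{1/2};q)_{n_k}(q^2;q^2)_{n_k}\big)$ into a seed Bailey pair and run the Bailey chain --- is exactly the paper's, and your seed is correct as far as it goes: the pair $\beta_n=(q^{1/2};q)_n/(q;q)_{2n}=1/\big((-q^{1/2};q)_n(q^2;q^2)_n\big)$, $\alpha_n=(-1)^n\big(q^{(3n^2-n)/4}+q^{(3n^2+n)/4}\big)$ relative to $a=1$ is precisely Slater's pair \eqref{B-G(1)}, which the paper simply cites. So the step you flag as the main obstacle (verifying the seed via a finite theta identity) is already in the literature and is not where the difficulty lies. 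Applying \eqref{Bailey lemma-infty} to this pair $k-1$ times and then invoking \eqref{cor1} does prove the theorem --- but only in the case $i=k+1$, which is exactly the case your consistency check at the end confirms.

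The gap is the assertion that the parameter $i$ is then "produced by the standard insertion of the linear factor exactly as in the proof of \eqref{AG}". That insertion is not available from a single seed relative to $a=1$: to place the linear terms $N_i+\cdots+N_k$ at an interior point of the chain one needs the Bailey lattice, namely (a) a companion seed with essentially the same $\beta_n$ but relative to $a=q$, and (b) a lemma shifting a Bailey pair from $a=q$ down to $a=1$ partway through the iteration (the paper's Lemmas \ref{lem-DJK} and \ref{lem-BP-mod}). The $a=q$ companion --- the pair \eqref{B-G(1.1)} with $\beta_n(q;q)=1/\big((q^2;q^2)_n(-q^{1/2};q)_n\big)$ and $\alpha_n(q;q)=(-1)^nq^{\frac{3}{2}\binom{n+1}{2}}(q^{-n}-q^{n+1})/(1-q)$ --- is new, does not follow from the $a=1$ pair you verified, and is established in Lemma \ref{lem-new-Bailey} by specializing a bilateral summation of Slater; this is the genuine technical content of the paper's proof. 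In Andrews--Gordon the analogous step is invisible because the seed there is the unit pair $\beta_n=\delta_{n,0}$, trivially a Bailey pair for both $a=1$ and $a=q$; here it is not trivial. So your outline establishes $i=k+1$ (and, using Slater's \eqref{G(3)}, could reach $i=k$), but the cases $1\le i\le k-1$ require the additional $a=q$ seed and the lattice step, neither of which your proposal supplies.
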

As a consequence of this theorem, we have the following identity (see Corollary \ref{cor-gen-13}) which includes \eqref{intro-table2.13.1}--\eqref{intro-table2.13.3} as special instances: For $k \in \mathbb{N}^+, 1 \le i \le k+1$,
\begin{align}
   & \sum_{m,n_1,\dots,n_k\ge 0}\frac{q^{\frac{1}{2}m^2+mn_k+(N_1^2+N_2^2+\cdots+N_k^2)+N_i+N_{i+1}+\cdots+N_k}}{(q;q)_{m}(q;q)_{n_1}\cdots(q;q)_{n_{k-1}}(q^2;q^2)_{n_k}} \nonumber \\
&    =
    \frac{(-q^{\frac{1}{2}};q)_\infty(q^{i},q^{\frac{3}{2}+2k-i},q^{\frac{3}{2}+2k};q^{\frac{3}{2}+2k})_\infty}{(q;q)_\infty}. \label{intro-ex13-general}
\end{align}
We also find a generalization of \eqref{intro-table2.13.4} based on the following theorem (see Corollary \ref{cor-gen-13-last}).
\begin{theorem}\label{thm-gen-13-odd}
Let $N_i=n_i+n_{i+1}+\cdots+n_k$ ($1\leq i\leq k$). We have
\begin{align}
\sum_{n_1,\dots,n_k\geq 0} \frac{q^{N_1^2+N_2^2+\cdots+N_k^2+N_1+\cdots+N_k}}{(q;q)_{n_1}\cdots(q;q)_{n_{k-1}}(-q^{\frac{1}{2}};q)_{n_k+1}(q^2;q^2)_{n_k}}
=\frac{(q^{\frac{1}{2}},q^{1+2k},q^{\frac{3}{2}+2k};q^{\frac{3}{2}+2k})_\infty}{(q;q)_\infty}.
\end{align}
\end{theorem}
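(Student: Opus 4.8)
The plan is to realize the left-hand side as an iterated Bailey chain relative to $a=q$ and then to evaluate the resulting single sum by the Jacobi triple product. First I would simplify the innermost factor: using $(a;q)_m(-a;q)_m=(a^2;q^2)_m$ one checks the identity
\begin{align}
(-q^{1/2};q)_{n+1}(q^2;q^2)_n=(q;q)_n\,(-q^{1/2};q^{1/2})_{2n+1},\nonumber
\end{align}
so that, writing $m_i=N_i$ (equivalently $n_j=m_j-m_{j+1}$ with $m_{k+1}=0$), the left-hand side becomes
\begin{align}
\sum_{m_1\ge\cdots\ge m_k\ge 0}\frac{\prod_{i=1}^{k}q^{m_i^2+m_i}}{\prod_{i=1}^{k-1}(q;q)_{m_i-m_{i+1}}}\,\beta_{m_k},\qquad \beta_n:=\frac{1}{(q;q)_n\,(-q^{1/2};q^{1/2})_{2n+1}}.\nonumber
\end{align}
Because the quadratic weight appears as $q^{m_i^2+m_i}=a^{m_i}q^{m_i^2}$ with $a=q$, this is exactly the shape produced by iterating the Bailey chain relative to $a=q$ with seed $\beta$.

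The heart of the matter is to identify the companion sequence $\alpha=(\alpha_n)$ of the Bailey pair $(\alpha,\beta)$ relative to $a=q$, i.e. to satisfy $\beta_n=\sum_{r=0}^{n}\alpha_r/\big((q;q)_{n-r}(q^2;q)_{n+r}\big)$. Guided by the target theta function I expect
\begin{align}
\alpha_n=\frac{(-1)^n q^{(3n^2+n)/4}\,(1-q^{n+1/2})}{1-q},\nonumber
\end{align}
whose pentagonal-type exponent already signals the eventual triple product; one checks directly that $\alpha_0=\beta_0=1/(1+q^{1/2})$. Proving that this $(\alpha,\beta)$ is genuinely a Bailey pair is the step I expect to be the main obstacle, because the half-integer factor $(-q^{1/2};q^{1/2})_{2n+1}$ forces a base-$q^{1/2}$ computation; I would establish it either by a direct (terminating) verification of the defining relation, by $q$-Zeilberger, or by exhibiting it as a base change of a known Bailey pair attached to the pentagonal number theorem, in parallel with the seed used for Theorem \ref{thm-gen-13-original}.

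Granting the Bailey pair, I would iterate the chain $k-1$ times and then sum the outermost level. Each iteration replaces $\alpha_n$ by $q^{n^2+n}\alpha_n$, and the final summation over $m_1$ reduces, after the Durfee-rectangle identity $\sum_{s\ge 0}q^{s^2+Ns}/\big((q;q)_s(q;q)_{s+N}\big)=1/(q;q)_\infty$ (taken with $N=2r+1$), to
\begin{align}
\sum_{n_1,\dots,n_k\ge 0}\frac{q^{N_1^2+\cdots+N_k^2+N_1+\cdots+N_k}}{(q;q)_{n_1}\cdots(q;q)_{n_{k-1}}(-q^{1/2};q)_{n_k+1}(q^2;q^2)_{n_k}}=\frac{1-q}{(q;q)_\infty}\sum_{r\ge 0}q^{kr^2+kr}\,\alpha_r.\nonumber
\end{align}

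Finally I would evaluate the single sum. Substituting the closed form of $\alpha_r$ and unfolding the factor $(1-q^{r+1/2})$ recombines the two resulting series into one bilateral sum indexed by $m\in\mathbb{Z}$ (with $r=-m$ for $m\le 0$ and $r=m-1$ for $m\ge 1$), and the Jacobi triple product $(z,q/z,q;q)_\infty=\sum_{m\in\mathbb{Z}}(-1)^m z^m q^{\binom m2}$, applied with base $q^{3/2+2k}$ and $z=q^{1/2}$, gives
\begin{align}
(1-q)\sum_{r\ge 0}q^{kr^2+kr}\,\alpha_r=(q^{1/2},q^{1+2k},q^{3/2+2k};q^{3/2+2k})_\infty.\nonumber
\end{align}
Dividing by $(q;q)_\infty$ yields the claimed identity. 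The second delicate point is precisely this last bookkeeping: matching the chain-generated exponent $kr^2+kr$ with the characteristic $(1/2,\,1+2k)$ of the theta function, which is what pins down the residue $q^{1/2}$ in the numerator.
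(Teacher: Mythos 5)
Your proposal is correct and follows essentially the same route as the paper: iterate the $(a,q)\to(aq^{n^2}$-type$)$ Bailey chain relative to $a=q$ starting from the seed $\beta_n=1/\bigl((q^2;q^2)_n(-q^{3/2};q)_n(1+q^{1/2})\bigr)$, pass to the limiting form of Bailey's lemma, and finish with the Jacobi triple product. The one step you flag as the main obstacle is not one: your guessed $\alpha_n=(-1)^nq^{(3n^2+n)/4}(1-q^{n+1/2})/(1-q)$ together with your $\beta_n$ is exactly $1/(1+q^{1/2})$ times Slater's known pair \eqref{G(2)}, which the paper simply cites, so no independent verification is needed.
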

There might be an identity similar to Theorem \ref{thm-gen-13-original} that generalizes Theorem \ref{thm-gen-13-odd}. But we are unable to find it at this stage.
We also find similar generalizations for identities in some other examples.

The paper is organized as follows. In Section \ref{sec-pre} we first collect some auxiliary identities which will be used in our proofs. Then we briefly discuss the theory of Bailey pairs and introduce some known and new Bailey pairs. In Section \ref{sec-exam} we discuss Mizuno's examples in \cite[Table 2]{Mizuno} one by one and present identities for them. Finally, in Section \ref{sec-general} we discuss multi-sum generalizations of many identities in Section \ref{sec-exam}. In particular, we will present proofs for Theorems \ref{thm-gen-13-original} and \ref{thm-gen-13-odd} as well as an identity of Warnaar.

\section{Preliminaries}\label{sec-pre}

\subsection{Auxiliary identities}
First, we need Euler's $q$-exponential identities \cite[Corollary 2.2]{Andrews}
\begin{align}\label{euler}
\sum_{n=0}^{\infty}\frac{z^n}{(q;q)_n}
=
\frac{1}{(z;q)_{\infty}}, \quad|z|<1, \quad
\sum_{n=0}^{\infty}\frac{q^{(^n_2)} z^n}{(q;q)_n}
=
(-z;q)_{\infty}
\end{align}
and the Jacobi triple product identity \cite[Theorem 2.8]{Andrews}
\begin{align}\label{Jacobi}
(q,z,q/z;q)_\infty=\sum_{n=-\infty}^\infty (-1)^nq^{\binom{n}{2}}z^n.
\end{align}

We establish two useful lemmas which play as a key role in simplifying multi-sums.
\begin{lemma}\label{lem-12}
We have
\begin{align}
\sum_{i,j\geq 0}\frac{u^{i+2j}q^{\binom{i}{2}}}{(q;q)_i(q^2;q^2)_j}
&=\frac{1}
{(u;q)_{\infty}},\label{eq-lem}
\end{align}
and as a consequence: for $n\geq 0$,
\begin{align}
\sum_{i+2j=n} \frac{q^{i(i-1)/2}}{(q;q)_i(q^2;q^2)_j}=\frac{1}{(q;q)_n}.
\end{align}
\end{lemma}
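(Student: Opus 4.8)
The plan is to exploit the fact that the summand on the left of \eqref{eq-lem} factors completely across the two summation indices. Writing $u^{i+2j}=u^i(u^2)^j$, the double sum separates as a product of two independent single sums:
\[
\sum_{i,j\geq 0}\frac{u^{i+2j}q^{\binom{i}{2}}}{(q;q)_i(q^2;q^2)_j}
=\left(\sum_{i\geq 0}\frac{q^{\binom{i}{2}}u^i}{(q;q)_i}\right)\left(\sum_{j\geq 0}\frac{(u^2)^j}{(q^2;q^2)_j}\right).
\]
Each factor is then handled by one of Euler's $q$-exponential identities \eqref{euler}. The first factor equals $(-u;q)_\infty$ by the second identity in \eqref{euler} with $z=u$, and the second factor equals $1/(u^2;q^2)_\infty$ by the first identity in \eqref{euler} applied with $q$ replaced by $q^2$ and $z=u^2$ (the geometric convergence condition $|u^2|<1$ for the formal/analytic manipulation being the same as for the right-hand side).

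The remaining step is to reconcile the product $(-u;q)_\infty/(u^2;q^2)_\infty$ with the claimed answer $1/(u;q)_\infty$. This is immediate from the elementary factorization $1-u^2q^{2k}=(1-uq^k)(1+uq^k)$, which gives $(u^2;q^2)_\infty=(u;q)_\infty(-u;q)_\infty$; substituting this in cancels the factor $(-u;q)_\infty$ and leaves exactly $1/(u;q)_\infty$, proving \eqref{eq-lem}.

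For the consequence, I would extract the coefficient of $u^n$ on both sides of \eqref{eq-lem}. On the right, the first Euler identity in \eqref{euler} with $z=u$ expands $1/(u;q)_\infty=\sum_{n\geq 0}u^n/(q;q)_n$, so the coefficient of $u^n$ is $1/(q;q)_n$. On the left, collecting the terms with $i+2j=n$ shows the coefficient of $u^n$ is $\sum_{i+2j=n}q^{i(i-1)/2}/\big((q;q)_i(q^2;q^2)_j\big)$. Equating the two coefficients yields the stated identity.

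The argument is essentially bookkeeping, so there is no substantial obstacle; the only point requiring a moment's attention is recognizing that the double sum decouples and that the two Euler identities must be applied at different bases ($q$ for the $i$-sum and $q^2$ for the $j$-sum), together with the splitting $(u^2;q^2)_\infty=(u;q)_\infty(-u;q)_\infty$ that makes the two exponential factors collapse to the desired single reciprocal product.
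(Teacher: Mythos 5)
Your proof is correct and is essentially the same as the paper's: both rest on the factorization $(u^2;q^2)_\infty=(u;q)_\infty(-u;q)_\infty$ combined with Euler's two $q$-exponential identities (one at base $q$, one at base $q^2$), followed by extracting the coefficient of $u^n$. The only difference is the direction of presentation — the paper starts from $1/(u;q)_\infty$ and expands, while you start from the double sum and collapse it — which is immaterial.
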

\begin{proof}
The first identity follows from the fact that
\begin{align*}
    \frac{1}{(u;q)_\infty}=\frac{(-u;q)_\infty}{(u^2;q^2)_\infty}=\sum_{i=0}^\infty \frac{u^iq^{i(i-1)/2}}{(q;q)_i} \sum_{j=0}^\infty \frac{u^{2j}}{(q^2;q^2)_j}.
\end{align*}
The second identity follows by comparing the coefficients of $u^n$ on both sides.
\end{proof}

\begin{lemma}\label{lem-13}
We have
\begin{align}
    \sum_{i,j\ge 0}\frac{u^{i+j}q^{i^2+j^2-i}}{(q^2;q^2)_i(q^2;q^2)_j}
    =
    (-u;q)_\infty,
\end{align}
and as a consequence: for $n\geq 0$,
\begin{align}
    \sum_{i+j=n}\frac{q^{i^2+j^2-i}}{(q^2;q^2)_i(q^2;q^2)_j}
    =\frac{q^{(n^2-n)/2}}{(q;q)_n}.
\end{align}
\end{lemma}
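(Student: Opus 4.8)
The plan is to mirror the proof of Lemma~\ref{lem-12}: split the summand so that the double sum factors as a product of two independent single sums, evaluate each factor by the second of Euler's identities in \eqref{euler}, and then recombine the resulting infinite products.

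First I would observe that the exponent separates as $u^{i+j}q^{i^2+j^2-i}=(u^iq^{i^2-i})(u^jq^{j^2})$, so that the double sum factors:
\begin{align*}
\sum_{i,j\ge 0}\frac{u^{i+j}q^{i^2+j^2-i}}{(q^2;q^2)_i(q^2;q^2)_j}
=\left(\sum_{i\ge 0}\frac{u^iq^{i^2-i}}{(q^2;q^2)_i}\right)\left(\sum_{j\ge 0}\frac{u^jq^{j^2}}{(q^2;q^2)_j}\right).
\end{align*}
Next I would apply the second identity in \eqref{euler} with base $q^2$, namely $\sum_{n\ge 0}(q^2)^{\binom{n}{2}}z^n/(q^2;q^2)_n=(-z;q^2)_\infty$. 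Since $q^{i^2-i}=(q^2)^{\binom{i}{2}}$, the first factor equals $(-u;q^2)_\infty$ directly. For the second factor I would write $q^{j^2}=(q^2)^{\binom{j}{2}}q^j$ and absorb the extra $q^j$ into the summation variable, obtaining $\sum_{j\ge 0}(q^2)^{\binom{j}{2}}(uq)^j/(q^2;q^2)_j=(-uq;q^2)_\infty$. Multiplying the two and using the even/odd splitting $(-u;q^2)_\infty(-uq;q^2)_\infty=(-u;q)_\infty$ yields the first identity.

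For the consequence I would simply extract the coefficient of $u^n$ from both sides. On the left, collecting terms with $i+j=n$ gives $\sum_{i+j=n}q^{i^2+j^2-i}/\big((q^2;q^2)_i(q^2;q^2)_j\big)$. On the right, I would expand $(-u;q)_\infty$ by the second Euler identity in base $q$, namely $(-u;q)_\infty=\sum_{n\ge 0}q^{\binom{n}{2}}u^n/(q;q)_n$, whose $u^n$-coefficient is $q^{(n^2-n)/2}/(q;q)_n$. Equating the two coefficients produces the claimed finite identity.

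The only real obstacle is the bookkeeping of exponents: one must verify that $q^{i^2-i}$ matches the base-$q^2$ Gaussian weight $(q^2)^{\binom{i}{2}}$ on the nose, whereas $q^{j^2}$ carries a surplus $q^j$ that must be reinterpreted as the substitution $u\mapsto uq$ in the second factor. Once these two shifts are aligned, the entire argument collapses to Euler's identity together with the elementary product rearrangement $(-u;q^2)_\infty(-uq;q^2)_\infty=(-u;q)_\infty$, exactly paralleling the structure of Lemma~\ref{lem-12}.
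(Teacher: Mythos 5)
Your proposal is correct and is essentially the paper's own proof: the paper likewise writes $(-u;q)_\infty=(-u,-uq;q^2)_\infty$, expands each factor by Euler's second identity in base $q^2$, and then extracts the coefficient of $u^n$ via the base-$q$ Euler expansion of $(-u;q)_\infty$. Your exponent bookkeeping ($q^{i^2-i}=(q^2)^{\binom{i}{2}}$ and $q^{j^2}=(q^2)^{\binom{j}{2}}q^j$) is exactly the verification the paper leaves implicit.
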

\begin{proof}
The first identity follows from the fact that
\begin{align*}
(-u;q)_\infty=(-u,-uq;q^2)_\infty=\sum_{i=0}^\infty\frac{u^iq^{i^2-i}}{(q^2;q^2)_i}\sum_{j\ge 0}\frac{u^jq^{j^2}}{(q^2;q^2)_j}.
\end{align*}
The second identity follows by comparing the coefficients of $u^n$ on both sides.
\end{proof}

\subsection{Bailey pairs}
 A pair of sequences $(\alpha_n(a;q),\beta_n(a;q))$ is called a Bailey pair relative to $a$ if for all $n\geq 0$,
 \begin{align}
     \beta_n(a;q)=\sum_{k=0}^n\frac{\alpha_k(a;q)}{(q;q)_{n-k}(aq;q)_{n+k}}.
 \end{align}

 \begin{lemma}[Bailey's Lemma]
Suppose that $(\alpha_n(a;q),\beta_n(a;q))$ is a Bailey pair relative to $a$. Then $(\alpha_n'(a;q),\beta_n'(a;q))$ is also a Bailey pair relative to $a$ where
 \begin{equation}
 \begin{split}
 \alpha_n'(a;q)&:=\frac{(\rho_1,\rho_2;q)_n(aq/\rho_1\rho_2)^n}{(aq/\rho_1,aq/\rho_2;q)_n}\alpha_n(a;q), \\ \beta_n'(a;q)&:=\sum_{r=0}^n\frac{(\rho_1,\rho_2;q)_r(aq/\rho_1\rho_2;q)_{n-r}(aq/\rho_1\rho_2)^r}{(aq/\rho_1,aq/\rho_2;q)_n(q;q)_{n-r}}\beta_r(a;q).
 \end{split}
 \end{equation}
 Equivalently, if $(\alpha_n(a;q),\beta_n(a;q))$ is a Bailey pair, then
\begin{align}\label{eq-Bailey-general-id}
&\frac{1}{(aq/\rho_1,aq/\rho_2;q)_n}\sum_{j=0}^n \frac{(\rho_1,\rho_2;q)_j(aq/\rho_1\rho_2;q)_{n-j}}{(q;q)_{n-j}}\Big(\frac{aq}{\rho_1\rho_2} \Big)^j\beta_j(a;q) \nonumber \\
&=\sum_{r=0}^n \frac{(\rho_1,\rho_2;q)_r}{(q;q)_{n-r}(aq;q)_{n+r}(aq/\rho_1,aq/\rho_2;q)_r}\Big(\frac{aq}{\rho_1\rho_2} \Big)^r \alpha_r(a;q).
\end{align}
 \end{lemma}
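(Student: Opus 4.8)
The two assertions in the lemma are equivalent once the definitions are unwound, so it suffices to prove the first one. Indeed, writing out the defining relation of a Bailey pair for the proposed pair, namely $\beta_n'(a;q)=\sum_{r=0}^n \alpha_r'(a;q)/\big((q;q)_{n-r}(aq;q)_{n+r}\big)$, and substituting the explicit expressions for $\alpha_r'$ and $\beta_n'$, one sees that its left-hand side reproduces exactly the left-hand side of \eqref{eq-Bailey-general-id} and its right-hand side reproduces the right-hand side of \eqref{eq-Bailey-general-id}. Thus \eqref{eq-Bailey-general-id} is nothing but the Bailey-pair relation for $(\alpha_n',\beta_n')$, and both forms follow once we establish that $(\alpha_n',\beta_n')$ is a Bailey pair relative to $a$.

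To prove the latter, the plan is to start from the definition of $\beta_n'(a;q)$, insert the hypothesis that $(\alpha_n,\beta_n)$ is a Bailey pair so as to replace each $\beta_r(a;q)$ by $\sum_{k=0}^r \alpha_k(a;q)/\big((q;q)_{r-k}(aq;q)_{r+k}\big)$, and then interchange the order of the double summation so that $k$ runs first. This yields
$$\beta_n'(a;q)=\sum_{k=0}^n \alpha_k(a;q)\, S_{n,k},$$
where $S_{n,k}$ denotes the inner sum over $r$ from $k$ to $n$. The goal then reduces to showing that $S_{n,k}$ equals $(\rho_1,\rho_2;q)_k(aq/\rho_1\rho_2)^k/\big((aq/\rho_1,aq/\rho_2;q)_k(q;q)_{n-k}(aq;q)_{n+k}\big)$, since this is precisely $\alpha_k'(a;q)/\big((q;q)_{n-k}(aq;q)_{n+k}\big)$ and would complete the verification.

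The heart of the matter is the evaluation of $S_{n,k}$. Shifting the index by $r=k+s$ and extracting the $s$-independent prefactors by means of the splitting $(x;q)_{k+s}=(x;q)_k(xq^k;q)_s$ together with the reflection formulas for $(q;q)_{n-k-s}$ and $(aq/\rho_1\rho_2;q)_{n-k-s}$ (which introduce the factors $(q^{k-n};q)_s$ and $(\rho_1\rho_2 q^{k-n}/a;q)_s$), one rewrites $S_{n,k}$ as a constant times a terminating ${}_3\phi_2$ series in $s$ of argument $q$, with numerator parameters $q^{-(n-k)},\rho_1 q^k,\rho_2 q^k$ and denominator parameters $aq^{2k+1},\,\rho_1\rho_2 q^{k-n}/a$. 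A direct check of the balancing condition shows $de=bc\,q^{1-(n-k)}$ for $b=\rho_1 q^k$, $c=\rho_2 q^k$, $d=aq^{2k+1}$, $e=\rho_1\rho_2 q^{k-n}/a$ (both sides equal $\rho_1\rho_2 q^{3k+1-n}$), so the series is Saalschützian and may be summed in closed form by the $q$-Pfaff--Saalschütz theorem. Substituting the resulting product and simplifying the Pochhammer symbols collapses $S_{n,k}$ to the claimed value.

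The main obstacle is precisely this last computation: carrying the reflection formulas and the power-of-$q$ bookkeeping carefully enough that the prefactor extracted from $S_{n,k}$ combines with the $q$-Pfaff--Saalschütz evaluation to yield the exact factor $\alpha_k'/\big((q;q)_{n-k}(aq;q)_{n+k}\big)$; the delicate points are the verification of the balancing condition and the cancellation of the $q^{\binom{s}{2}}$ and $q^{(n-k)s}$ contributions coming from the two reflection formulas (after which the net argument reduces to $q$). Note also that, unlike Euler's identities \eqref{euler} and the Jacobi triple product \eqref{Jacobi}, the $q$-Pfaff--Saalschütz summation is not recorded in the excerpt, so I would invoke it as a classical result from the theory of basic hypergeometric series.
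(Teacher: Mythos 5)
Your proposal is correct. Note first that the paper itself supplies no proof of this lemma: it is recorded as the classical ``Bailey's Lemma'' and used as a black box, so there is no internal argument to compare yours against. Your reduction of the two formulations to one another is right --- the identity \eqref{eq-Bailey-general-id} is literally the defining relation $\beta_n'=\sum_{r=0}^n\alpha_r'/\bigl((q;q)_{n-r}(aq;q)_{n+r}\bigr)$ with the explicit expressions for $\alpha_r'$ and $\beta_n'$ inserted. Your proof of the first formulation is the standard one from the literature (Andrews, Gasper--Rahman): substitute the Bailey relation for $\beta_r$, interchange the order of summation, and evaluate the inner sum $S_{n,k}$ by the $q$-Pfaff--Saalsch\"utz theorem. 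I checked the key computation: after the shift $r=k+s$ the two reflection formulas contribute $(-\rho_1\rho_2/a)^s q^{\binom{s}{2}-(n-k)s}$ from $(aq/\rho_1\rho_2;q)_{n-k-s}$ and $(-1)^{-s}q^{-\binom{s}{2}+(n-k)s}$ from $1/(q;q)_{n-k-s}$, which together with $(aq/\rho_1\rho_2)^s$ give argument $q$ as you claim; the balancing condition $de=bcq^{1-(n-k)}$ holds with both sides equal to $\rho_1\rho_2q^{3k+1-n}$; and the Saalsch\"utz evaluation $(aq^{k+1}/\rho_1,aq^{k+1}/\rho_2;q)_{n-k}/\bigl((aq^{2k+1},aq/\rho_1\rho_2;q)_{n-k}\bigr)$ combines with the extracted prefactor, via $(aq/\rho_i;q)_k(aq^{k+1}/\rho_i;q)_{n-k}=(aq/\rho_i;q)_n$ and $(aq;q)_{2k}(aq^{2k+1};q)_{n-k}=(aq;q)_{n+k}$, to give exactly $\alpha_k'/\bigl((q;q)_{n-k}(aq;q)_{n+k}\bigr)$. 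Invoking $q$-Pfaff--Saalsch\"utz as an external classical result is entirely appropriate here, just as the paper invokes the whole lemma without proof.
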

We now recall several limiting cases of this lemma, which have been widely used in the literature. If we let $\rho_2 \to \infty$, we obtain the Bailey pair:
\begin{align}
\begin{split}
   & \alpha_n'(a;q)=\frac{(\rho_1;q)_n(-aq/\rho_1)^nq^{\binom{n}{2}}}{(aq/\rho_1;q)_n}\alpha_n(a;q),\\
   & \beta_n'(a;q)=\sum_{r=0}^n\frac{(\rho_1;q)_r(-aq/\rho_1)^rq^{\binom{r}{2}}}{(aq/\rho_1;q)_r(q;q)_{n-r}}\beta_r(a;q).
\end{split}
\end{align}

If we further let $\rho_1\to \infty$ as well, we obtain the Baily pair \cite[Eq. (S1)]{BIS}:
\begin{align}
        \alpha'_n(a;q)=a^nq^{n^2}\alpha_n(a;q), \quad
        \beta'_n(a;q)=\sum_{r=0}^n\frac{a^rq^{r^2}}{(q;q)_{n-r}}\beta_r(a;q).  \tag{S1} \label{Bailey lemma-infty}
\end{align}
Here and below the labels (S1), (S3) and (S5) of Bailey pairs are inherited from \cite{BIS}.

If we take $(\rho_1,\rho_2)\rightarrow (\infty,-q^{1/2})$, we obtain the Bailey pair \cite[Eq.\ (S3)]{BIS}:
\begin{align}\label{BL3}
\begin{split}
   & \alpha_n'(a;q)=\frac{(-q^{1/2};q)_n}{(-aq^{1/2};q)_n}a^nq^{n^2/2}\alpha_n(a;q), \\
   & \beta_n'(a;q)=\sum_{r=0}^n\frac{(-q^{1/2};q)_r}{(q;q)_{n-r}(-aq^{1/2};q)_n}a^rq^{r^2/2}\beta_r(a;q).
\end{split} \tag{S3}
\end{align}

If we take $(\rho_1,\rho_2)\rightarrow (\infty,-a^{1/2}q)$, we obtain the Bailey pair \cite[Eq.\ (S5)]{BIS}:
\begin{align}\label{BL5}
\begin{split}
    &\alpha_n'(a;q)=\frac{(-a^{1/2}q;q)_n}{(-a^{1/2};q)_n}a^{n/2}q^{(n^2-n)/2}\alpha_n(a;q), \\
    &\beta_n'(a;q)=\sum_{r=0}^n \frac{(-a^{1/2}q;q)_r}{(q;q)_{n-r}(-a^{1/2};q)_n}a^{r/2}q^{(r^2-r)/2}\beta_r(a;q).
\end{split} \tag{S5}
\end{align}

If we let $n,\rho_1,\rho_2\to \infty$ in \eqref{eq-Bailey-general-id}, we obtain the following result (see, e.g. \cite[Eq.\ (1.2.8)]{MSZ}).
\begin{lemma}
If $(\alpha_n(a;q),\beta_n(a;q))$ is a Bailey pair, we have
\begin{align}
    \sum_{n=0}^\infty a^nq^{n^2}\beta_n(a;q)=\frac{1}{(aq;q)_\infty}\sum_{n=0}^{\infty}a^nq^{n^2}\alpha_n(a;q). \label{cor1}
\end{align}
\end{lemma}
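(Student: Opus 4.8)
The plan is to derive \eqref{cor1} exactly as the text suggests, namely as the triple limit $n,\rho_1,\rho_2\to\infty$ of the general Bailey transform \eqref{eq-Bailey-general-id}, carrying out the two limiting steps one at a time and checking that every factor converges termwise.

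First I would send $\rho_1,\rho_2\to\infty$. The key elementary estimate is that for each fixed nonnegative integer $m$,
\[
\lim_{\rho_1,\rho_2\to\infty}(\rho_1,\rho_2;q)_m\Big(\frac{aq}{\rho_1\rho_2}\Big)^m=a^mq^{m^2}.
\]
Indeed $(\rho;q)_m=(-\rho)^mq^{\binom{m}{2}}\bigl(1+O(\rho^{-1})\bigr)$, so $(\rho_1;q)_m(\rho_2;q)_m(\rho_1\rho_2)^{-m}\to q^{2\binom{m}{2}}=q^{m^2-m}$, and the surviving factor $(aq)^m$ supplies the remaining $a^mq^m$. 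Simultaneously $(aq/\rho_1,aq/\rho_2;q)_n\to1$, $(aq/\rho_1,aq/\rho_2;q)_r\to1$, and $(aq/\rho_1\rho_2;q)_{n-j}\to1$, since each argument tends to $0$. Applying these limits termwise to \eqref{eq-Bailey-general-id} collapses it to the finite identity
\[
\sum_{j=0}^n\frac{a^jq^{j^2}}{(q;q)_{n-j}}\beta_j(a;q)
=\sum_{r=0}^n\frac{a^rq^{r^2}}{(q;q)_{n-r}(aq;q)_{n+r}}\alpha_r(a;q).
\]

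Second I would let $n\to\infty$. On the left the only remaining $n$-dependence is $(q;q)_{n-j}\to(q;q)_\infty$ for each fixed $j$; on the right both $(q;q)_{n-r}\to(q;q)_\infty$ and $(aq;q)_{n+r}\to(aq;q)_\infty$ for each fixed $r$. Hence the displayed finite identity becomes
\[
\frac{1}{(q;q)_\infty}\sum_{j=0}^\infty a^jq^{j^2}\beta_j(a;q)
=\frac{1}{(q;q)_\infty(aq;q)_\infty}\sum_{r=0}^\infty a^rq^{r^2}\alpha_r(a;q),
\]
and multiplying through by $(q;q)_\infty$ yields \eqref{cor1}. The only genuine issue is the rigor of passing the limits inside the (eventually infinite) sums, but I expect this to be routine rather than a real obstacle: for $|q|<1$ one can work in the ring of formal power series in $q$, where each power $q^M$ receives contributions from only finitely many summands so that the termwise limits above determine its coefficient exactly; alternatively, one dominates the tails uniformly in $\rho_1,\rho_2$ and $n$ and invokes dominated convergence. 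Either route legitimizes the interchange and completes the derivation.
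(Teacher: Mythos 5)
Your proposal is correct and follows exactly the route the paper indicates: the lemma is obtained by letting $n,\rho_1,\rho_2\to\infty$ in \eqref{eq-Bailey-general-id}, which is precisely the limit you compute (the paper simply cites this limiting procedure without writing out the details you supply). Your termwise asymptotics $(\rho;q)_m=(-\rho)^mq^{\binom{m}{2}}(1+O(\rho^{-1}))$ and the resulting intermediate finite identity are both accurate, and the justification of the limit interchange via formal power series is the standard and adequate one.
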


We need some known Bailey pairs from \cite[p.\ 469]{Slater1951}. We write $\alpha_n(a,q)$ in a unified expression while Slater \cite{Slater1951} wrote different expressions for even  and odd values of $n$ separately. Below we always let $\alpha_{0}=1$. The expressions of Bailey pairs for $n\geq 1$ are given as follows:
\begin{align}
    &\alpha_n(1;q)=(-1)^nq^{\frac{1}{2}n^2+\frac{1}{2}\binom{n}{2}}(1+q^{\frac{1}{2}n}), \quad \beta_n(1;q)=\frac{1}{(q^2;q^2)_n(-q^{\frac{1}{2}};q)_n}; \tag{G1}
    \label{B-G(1)}
    \\
    &\alpha_n(q;q)=(-1)^n q^{\frac{3}{2}\binom{n+1}{2}}\frac{q^{-{n}/{2}}-q^{{(n+1)}/{2}}}{1-q^{{1}/{2}}}, \quad \beta_n(q;q)=\frac{1}{(q^2;q^2)_n(-q^{{3}/{2}};q)_n};   \tag{G2}\label{G(2)} \\
    &\alpha_n(1;q)=(-1)^nq^{\frac{3}{2}\binom{n}{2}}(1+q^{\frac{3}{2}n}), \quad \beta_n(1;q)=\frac{q^n}{(q^2;q^2)_n(-q^{1/2};q)_n}. \tag{G3} \label{G(3)}
\end{align}

For our purpose, we also need the following Bailey pair which appears to be new:
\begin{equation}
\begin{split}
 &\alpha_n(q;q)=(-1)^nq^{\frac{3}{2}\binom{n+1}{2}}\frac{q^{-n}-q^{n+1}}{1-q}, \\
 &\beta_n(q;q)=\frac{1}{(q^2;q^2)_n(-q^{{1}/{2}};q)_n}.
\end{split} \tag{G1*} \label{B-G(1.1)}
\end{equation}
To verify that it is indeed a Bailey pair, we need the identity
\cite[Eq.\ (4.2)]{Slater1951}:
\begin{align}\label{bilateral-id}
    &\sum_{r=[-n/2]}^{[n/2]}\frac{(1-aq^{4r})(q^{-n};q)_{2r}a^{2r}q^{2nr+r}(d;q^2)_r(e;q^2)_r}{(1-a)(aq^{n+1};q)_{2r}d^re^r(aq^2/d;q^2)_r(aq^2/e;q^2)_r}\nonumber \\
    &=\frac{(q^2/a,aq/d,aq/e,aq^2/de;q^2)_\infty}{(q,q^2/d,q^2/e,a^2q/de;q^2)_\infty}\frac{(q;q)_n(aq;q)_n(a^2q/de;q^2)_n}{(aq;q^2)_n(aq/d,aq/e;q)_n}
\end{align}
where  $a$ has such a value that the series terminates below at $r=[-n/2]$.
\begin{lemma}\label{lem-new-Bailey}
We have
\begin{align}
    \sum_{i=0}^k\frac{(-1)^iq^{\frac{3}{4}i^2-\frac{1}{4}i}(1-q^{2i+1})}{(1-q)(q^2;q)_{k+i}(q;q)_{k-i}}=\frac{1}{(-q^{{1}/{2}};q)_k(q^2;q^2)_k}.\label{G(1)-new}
\end{align}
\end{lemma}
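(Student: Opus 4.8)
The plan is to recognize that the left-hand side of \eqref{G(1)-new} is precisely the quantity $\beta_k(q;q)$ generated by the candidate Bailey pair \eqref{B-G(1.1)}. Inserting $\alpha_i(q;q)=(-1)^iq^{\frac32\binom{i+1}{2}}(q^{-i}-q^{i+1})/(1-q)$ into the defining relation $\beta_k=\sum_{i=0}^k\alpha_i(q;q)/\big((q;q)_{k-i}(q^2;q)_{k+i}\big)$ (here $a=q$, so that $aq=q^2$), and using $q^{\frac32\binom{i+1}{2}}(q^{-i}-q^{i+1})=q^{\frac34 i^2-\frac14 i}(1-q^{2i+1})$, reproduces exactly the summand on the left of \eqref{G(1)-new}. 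Thus proving \eqref{G(1)-new} is the same as verifying that \eqref{B-G(1.1)} is a Bailey pair, and the whole task reduces to evaluating this single sum in closed form.

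First I would put the sum into hypergeometric shape by means of the reflection formulas $(q^{-k};q)_i=(-1)^iq^{\binom{i}{2}-ki}(q;q)_k/(q;q)_{k-i}$ and $(q^{k+2};q)_i=(q^2;q)_{k+i}/(q^2;q)_k$, which replace $1/\big((q;q)_{k-i}(q^2;q)_{k+i}\big)$ by $(q^{-k};q)_i/(q^{k+2};q)_i$ up to the $i$-independent factor $(q;q)_k(q^2;q)_k$. After pulling out this constant, the left side of \eqref{G(1)-new} matches the exact template of Slater's bilateral summation \eqref{bilateral-id} with $a=q$ and $n=k$: the well-poised factor $(1-aq^{4r})/(1-a)$ becomes $(1-q^{2i+1})/(1-q)$ under the identification $i=2r$, and the symbols $(q^{-n};q)_{2r}$, $(aq^{n+1};q)_{2r}$ match $(q^{-k};q)_i$, $(q^{k+2};q)_i$.

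Next I would specialize the two remaining free parameters to $d\to\infty$ and $e=-q^{3/2}$, a choice whose point is twofold. On the left of \eqref{bilateral-id} the $d,e$-dependent quotient $\frac{(d;q^2)_r(e;q^2)_r}{d^re^r(aq^2/d;q^2)_r(aq^2/e;q^2)_r}$ collapses to a pure power of $q$: the limit $d\to\infty$ contributes $(-1)^rq^{r^2-r}$, while $e=-q^{3/2}$ makes the $q^2$-shifted factorials in numerator and denominator cancel and leaves $(-1)^rq^{-3r/2}$, so the two signs cancel and the quotient equals $q^{r^2-5r/2}$; tracking the elementary powers then shows that each term of \eqref{bilateral-id} equals $(q;q)_k(q^2;q)_k$ times the corresponding term of \eqref{G(1)-new}. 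On the right of \eqref{bilateral-id} the infinite-product prefactor collapses to $1$ because $a=q$ forces its numerator and denominator to coincide, while $aq/e=q^2/e=-q^{1/2}$ produces the factor $(-q^{1/2};q)_k$ and $aq=q^2$ the factor $(q^2;q^2)_k$, so the surviving finite products simplify to $(q;q)_k(q^2;q)_k/\big((q^2;q^2)_k(-q^{1/2};q)_k\big)$. Dividing by the pulled-out constant $(q;q)_k(q^2;q)_k$ then yields the right-hand side of \eqref{G(1)-new}.

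The step I expect to be the most delicate is the reindexing, since \eqref{bilateral-id} is a bilateral sum over $r=[-k/2],\dots,[k/2]$ carrying the subscript $2r$, whereas \eqref{G(1)-new} is a one-sided sum over all $0\le i\le k$. The reconciliation is a parity split: the nonnegative $r$ supply the even indices $i=2r$, and the negative $r$ supply the odd indices $i=-2r-1$. Making this precise requires the negative-subscript convention $(x;q)_{-m}=1/(xq^{-m};q)_m$ and careful bookkeeping of signs and of half-integer powers of $q$ to check that, for a negative $r$, the factor $(1-q^{1+4r})=(1-q^{-2i-1})$ together with the reflected Pochhammers still collapses into the clean summand $q^{\frac34 i^2-\frac14 i}(1-q^{2i+1})$ of \eqref{G(1)-new}; the condition on $a=q$ guaranteeing lower termination at $r=[-k/2]$ is exactly what makes the odd indices stop at $i=k$ (or $k-1$), so that both parities together cover $i=0,\dots,k$. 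As a sanity check I would first confirm $T_{-1}=(q;q)_k(q^2;q)_k\cdot(\text{the }i=1\text{ term})$ and test the identity for $k=0,1,2$ before committing to the general bilateral manipulation.
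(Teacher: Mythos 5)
Your proposal is correct and follows essentially the same route as the paper: the authors also prove \eqref{G(1)-new} by specializing Slater's bilateral summation \eqref{bilateral-id} at $a=q$, $n=k$, with one of the two free parameters set to $-q^{3/2}$ and the other sent to infinity (you take $(d,e)\to(\infty,-q^{3/2})$, they take $(d,e)=(-q^{3/2},\infty)$, which is the same by symmetry), and then split the bilateral range by the parity substitution $r\mapsto -r-1$ exactly as you describe. All of your intermediate computations (the quotient $q^{r^2-5r/2}$, the collapse of the infinite-product prefactor at $a=q$, and the surviving finite products $(q;q)_k(q^2;q)_k/\bigl((q^2;q^2)_k(-q^{1/2};q)_k\bigr)$) agree with the paper's.
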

This lemma proves that \eqref{B-G(1.1)} is indeed a Bailey pair.
\begin{proof}
By definition \eqref{aq-defn}, for $r,k\geq 0$ we have
\begin{align}
    (q^{-k};q)_{2r}=(-1)^kq^{2r^2-2kr-r}(1-q^k)(1-q^{k-1})\cdots (1-q^{k-2r+1}), \label{prod-1}\\
    \frac{(q^{-k};q)_{-2r-2}}{(q^{k+2};q)_{-2r-2}}=q^{(r+1)(2k+2r+3)}\frac{(1-q^{k-2r})(1-q^{k-2r+1})\cdots (1-q^{k+1})}{(1-q^{k+1})(1-q^{k+2})\cdots (1-q^{k+2r+2})}. \label{prod-2}
\end{align}
Setting $a=q, d=-q^{\frac{3}{2}}, e\to \infty$ in \eqref{bilateral-id}, we have
    \begin{align*}
        &\frac{(q;q)_k(q^2;q)_k}{(q^2;q^2)_k(-q^{\frac{1}{2}};q)_k}=
        \sum_{r=[-k/2]}^{[k/2]}\frac{(1-q^{4r+1})(q^{-k};q)_{2r}q^{2kr+r^2+\frac{1}{2}r}}{(1-q)(q^{k+2};q)_{2r}}
         \\
        &=\sum_{r\ge 0}\frac{(1-q^{4r+1})(q^{-k};q)_{2r}q^{2kr+r^2+\frac{1}{2}r}}{(1-q)(q^{k+2};q)_{2r}}+\sum_{r < 0}\frac{(1-q^{4r+1})(q^{-k};q)_{2r}q^{2kr+r^2+\frac{1}{2}r}}{(1-q)(q^{k+2};q)_{2r}}
        \\
         &=\sum_{r\ge 0}\frac{(1-q^{4r+1})(q^{-k};q)_{2r}q^{2kr+r^2+\frac{1}{2}r}}{(1-q)(q^{k+2};q)_{2r}}  \\
        &\quad \quad \quad
        +\sum_{r\geq 0}\frac{(1-q^{-4r-3})(q^{-k};q)_{-2r-2}q^{-2k(r+1)+(r+1)^2-\frac{1}{2}(r+1)}}{(1-q)(q^{k+2};q)_{-2r-2}}
        \\
        &=(q;q)_k(q^2;q)_k\Big(\sum_{\begin{smallmatrix}
i=2r\geq 0
\end{smallmatrix}}\frac{(-1)^iq^{\frac{3}{4}i^2-\frac{1}{4}i}(1-q^{2i+1})}{(1-q)(q^2;q)_{k+i}(q;q)_{k-i}} \nonumber \\
&\quad \quad  \quad \quad    \quad \quad   \quad \quad   \quad \quad        +
        \sum_{\begin{smallmatrix}
i=2r+1\geq 0
\end{smallmatrix}}\frac{(-1)^iq^{\frac{3}{4}i^2-\frac{1}{4}i}(1-q^{2i+1})}{(1-q)(q^2;q)_{k+i}(q;q)_{k-i}}\Big)
\\
    &=(q;q)_k(q^2;q)_k\sum_{i=0}^k\frac{(-1)^iq^{\frac{3}{4}i^2-\frac{1}{4}i}(1-q^{2i+1})}{(1-q)(q^2;q)_{k+i}(q;q)_{k-i}}.
    \end{align*}
Here for the third equality we replaced $r$ by $-r-1$ in the second sum, and for the fourth equality we used \eqref{prod-1} and \eqref{prod-2} for simplifications.   This proves \eqref{G(1)-new}.
\end{proof}

\begin{lemma}\label{lem-DJK}
(Cf. \cite[Lemma 1.12]{DJK}).
If $(\alpha_n(a;q),\beta_n(a;q))$ is a Bailey pair related to $a$, then
\begin{equation}\label{Douse-lemma1.12}
\begin{split}
    &\alpha_n'(a/q;q):=(1-a)\Big(\frac{1-bq^n}{1-b}\frac{\alpha_n}{1-aq^{2n}}-\frac{q^{n-1}(aq^{n-1}-b)}{1-b}\frac{\alpha_{n-1}}{1-aq^{2n-2}}\Big) \\
    &\beta_n'(a/q;q):=\frac{(bq;q)_n}{(b;q)_n}\beta_n
\end{split}
\end{equation}
is a Bailey pair related to $a/q$.
\end{lemma}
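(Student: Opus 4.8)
The plan is to verify directly that the pair $(\alpha_n'(a/q;q),\beta_n'(a/q;q))$ satisfies the defining relation of a Bailey pair relative to $a/q$. Since $((a/q)q;q)_{n+k}=(a;q)_{n+k}$, this amounts to proving
\[
\frac{(bq;q)_n}{(b;q)_n}\beta_n=\sum_{k=0}^n\frac{\alpha_k'(a/q;q)}{(q;q)_{n-k}(a;q)_{n+k}},
\]
where I abbreviate $\alpha_k=\alpha_k(a;q)$ and $\beta_k=\beta_k(a;q)$. First I would substitute the definition of $\alpha_k'(a/q;q)$ into the right-hand side and split the resulting sum into the part $S_1$ carrying $\alpha_k$ and the part $S_2$ carrying $\alpha_{k-1}$.

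Next, in $S_2$ I would shift the summation index by one (replacing $k$ by $k+1$ and using the convention $\alpha_{-1}=0$, so the $k=0$ term drops out), so that both sums run over $\alpha_k$ with $0\le k\le n-1$, leaving an isolated boundary term $k=n$ coming only from $S_1$. Collecting the coefficient of $\alpha_k$ for $0\le k\le n-1$ and using $(q;q)_{n-k}=(1-q^{n-k})(q;q)_{n-k-1}$ together with $(a;q)_{n+k+1}=(1-aq^{n+k})(a;q)_{n+k}$, the two contributions combine over a common denominator into
\[
\frac{1-a}{(1-b)(1-aq^{2k})}\cdot\frac{1}{(q;q)_{n-k-1}(a;q)_{n+k}}\cdot\frac{(1-bq^k)(1-aq^{n+k})-q^k(aq^k-b)(1-q^{n-k})}{(1-q^{n-k})(1-aq^{n+k})}.
\]

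The key step, and the one I expect to be the main obstacle, is the algebraic identity
\[
(1-bq^k)(1-aq^{n+k})-q^k(aq^k-b)(1-q^{n-k})=(1-aq^{2k})(1-bq^n),
\]
which I would establish by expanding both sides, whereupon all terms involving $b$ and the mixed $a$-cross terms cancel except those assembling the factored right-hand side. Once this factorization is in hand, the troublesome factor $1-aq^{2k}$ cancels, and using $(a;q)_{n+k}(1-aq^{n+k})=(1-a)(aq;q)_{n+k}$ and $(1-q^{n-k})(q;q)_{n-k-1}=(q;q)_{n-k}$, the coefficient of $\alpha_k$ collapses to $\frac{1-bq^n}{1-b}\cdot\frac{1}{(q;q)_{n-k}(aq;q)_{n+k}}$.

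Finally, I would check that the separated boundary term $k=n$ fits the same closed form (here $(a;q)_{2n}(1-aq^{2n})=(1-a)(aq;q)_{2n}$ performs the simplification), and recognize the telescoping $\frac{(bq;q)_n}{(b;q)_n}=\frac{1-bq^n}{1-b}$. Pulling this $k$-independent factor out of the sum leaves exactly $\sum_{k=0}^n\alpha_k/\big((q;q)_{n-k}(aq;q)_{n+k}\big)$, which equals $\beta_n$ because $(\alpha_n,\beta_n)$ is a Bailey pair relative to $a$. Hence the right-hand side equals $\frac{(bq;q)_n}{(b;q)_n}\beta_n=\beta_n'(a/q;q)$, completing the verification. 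All steps beyond the displayed factorization are routine index shifts and $q$-Pochhammer manipulations.
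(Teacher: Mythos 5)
Your proof is correct: the paper itself offers no proof of this lemma (it is simply cited from \cite[Lemma 1.12]{DJK}), and your direct verification is sound and self-contained. In particular, the key factorization $(1-bq^k)(1-aq^{n+k})-q^k(aq^k-b)(1-q^{n-k})=(1-aq^{2k})(1-bq^n)$ checks out (both sides expand to $1-aq^{2k}-bq^n+abq^{n+2k}$), the boundary term $k=n$ and the index shift with $\alpha_{-1}=0$ are handled correctly, and the remaining Pochhammer manipulations are routine, so the sum collapses to $\frac{1-bq^n}{1-b}\beta_n(a;q)=\beta_n'(a/q;q)$ as required.
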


\begin{lemma}\label{lem-BP-mod}
If $(\alpha_n(q;q),\beta_n(q;q))$ is a Bailey pair related to $q$ with
\begin{align}
\alpha_n(q;q)=\frac{(-1)^nu^{\binom{n+1}{2}}(q^{-n}-q^{n+1})}{1-q},
\end{align}
then $(\alpha'_n(1;q),\beta'_n(1;q))$ is a Bailey pair related to $1$ where
\begin{align}
\alpha_0'=1,\quad \alpha'_{n}(1;q)=(-1)^nu^{\binom{n}{2}}(1+u^n), \quad  \beta_n'(1;q)=q^n\beta_n(1;q).
\end{align}
\end{lemma}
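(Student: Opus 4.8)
The plan is to realize this as the limiting case $b\to\infty$ of the change-of-base lemma (Lemma \ref{lem-DJK}) applied with $a=q$, so that the resulting pair is relative to $a/q=1$, exactly as needed. First I would treat the $\beta$-transformation, which in Lemma \ref{lem-DJK} reads $\beta_n'(1;q)=\frac{(bq;q)_n}{(b;q)_n}\beta_n(q;q)$. Since $\frac{(bq;q)_n}{(b;q)_n}=\frac{1-bq^n}{1-b}\to q^n$ as $b\to\infty$, this immediately yields $\beta_n'(1;q)=q^n\beta_n(q;q)$, matching the claim.

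For the $\alpha$-transformation I would take the same limit in the first display of Lemma \ref{lem-DJK}. With $a=q$ the two $b$-dependent coefficients satisfy $\frac{1-bq^n}{1-b}\to q^n$ and $\frac{q^{n-1}(q^n-b)}{1-b}\to q^{n-1}$, so that for $n\geq 1$
\[
\alpha_n'(1;q)=(1-q)\left(q^n\frac{\alpha_n(q;q)}{1-q^{2n+1}}-q^{n-1}\frac{\alpha_{n-1}(q;q)}{1-q^{2n-1}}\right),
\]
while for $n=0$ the $\alpha_{n-1}$ term is absent and $\alpha_0'(1;q)=\alpha_0(q;q)=1$.

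The key algebraic step is to substitute the prescribed $\alpha_n(q;q)=\frac{(-1)^nu^{\binom{n+1}{2}}(q^{-n}-q^{n+1})}{1-q}$ and exploit the factorizations $q^{-n}-q^{n+1}=q^{-n}(1-q^{2n+1})$ and $q^{-(n-1)}-q^{n}=q^{-(n-1)}(1-q^{2n-1})$, which cancel precisely the denominators $1-q^{2n+1}$ and $1-q^{2n-1}$. The two terms then collapse to $\frac{(-1)^nu^{\binom{n+1}{2}}}{1-q}$ and $\frac{(-1)^{n-1}u^{\binom{n}{2}}}{1-q}$, so that $\alpha_n'(1;q)=(-1)^nu^{\binom{n+1}{2}}-(-1)^{n-1}u^{\binom{n}{2}}=(-1)^n\big(u^{\binom{n+1}{2}}+u^{\binom{n}{2}}\big)$. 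Finally, using $\binom{n+1}{2}-\binom{n}{2}=n$ this equals $(-1)^nu^{\binom{n}{2}}(1+u^n)$ for $n\geq 1$, as asserted.

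I do not anticipate a serious obstacle; the computation is essentially forced once the correct specialization of Lemma \ref{lem-DJK} is identified. The only point demanding care is the boundary bookkeeping at $n=0$: there the $\alpha_{n-1}$ contribution must be dropped, giving $\alpha_0'(1;q)=1$ rather than the value $2$ one would obtain by naively extending the $n\geq 1$ formula, which is why the statement records $\alpha_0'=1$ separately. Once the $b\to\infty$ limit is justified termwise and this boundary value is handled, Lemma \ref{lem-DJK} guarantees that $(\alpha_n'(1;q),\beta_n'(1;q))$ is a Bailey pair relative to $1$.
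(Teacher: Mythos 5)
Your proof is correct and follows exactly the paper's route: the paper's own proof is the single sentence ``Setting $b\to\infty$ in Lemma \ref{lem-DJK}, we obtain the Bailey pair,'' and you have simply carried out that specialization (with $a=q$) in full detail, including the telescoping cancellation of $1-q^{2n\pm 1}$ and the $n=0$ boundary case. No issues.
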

\begin{proof}
    Setting $b \to \infty$ in \eqref{Douse-lemma1.12}, we obtain the Bailey pair $(\alpha_n',\beta_n')$.
\end{proof}

We will not use Lemmas \ref{lem-DJK} and \ref{lem-BP-mod} until Section \ref{sec-general}, but here we point out an interesting fact. If we set  $u=q^{\frac{3}{2}}$ in Lemma \ref{lem-BP-mod}, then we obtain the \eqref{G(3)} Bailey pair immediately from the \eqref{B-G(1.1)} Bailey pair.

\section{Identities for Mizuno's rank three examples in Table 2}\label{sec-exam}
Since $c$ is uniquely determined by $A,b,d$ when $(A,b,c,d)$ is a modular quadruple, we will not mention it. In this section we set $d=(1,1,2)$ and $D=\diag\{1,1,2\}$. We will divide Mizuno's examples into seven groups. The rank three Nahm sums for examples in the same group can either be reduced to same set of lower rank Nahm sums or have similar product representations. For each example, we will present a set of Rogers--Ramanujan type identities which give modular product representations for the Nahm sums involved.

\subsection{Examples 1, 6, 7 and 10}

We will reduce the Nahm sums in these examples to double sums and use the following identities:
\begin{align}
 &\sum_{i,j \ge 0}\frac{q^{2i^2+ij+\frac{1}{2}j^2+\frac{1}{2}j}}{(q;q)_i(q;q)_j}
    =
    \frac{(q^4,q^6,q^{10};q^{10})_\infty}{(q;q)_\infty}, \quad \text{(\cite[Eq.\ (3.28)]{Wang-rank2})} \label{LW.rank2.3.28}
    \\
    &\sum_{i,j \ge 0}\frac{q^{2i^2+ij+\frac{1}{2}j^2+2i+\frac{1}{2}j}}{(q;q)_i(q;q)_j}
    =
    \frac{(q^2,q^8,q^{10};q^{10})_\infty}{(q;q)_\infty}. \quad \text{(\cite[Eq.\ (3.29)]{Wang-rank2})} \label{LW.rank2.3.29}
 \end{align}
\subsubsection{Example 1}
This example corresponds to
\begin{align*}
A=\begin{pmatrix}
1 & 1 & 0 \\
1 & 2 & 1 \\
0 & 2 & 4
\end{pmatrix}, \quad AD=\begin{pmatrix}
1 & 1 & 0 \\ 1 & 2 & 2 \\ 0 & 2 & 8
\end{pmatrix}, \quad
b \in \bigg\{
\begin{pmatrix}
1/2 \\ 1 \\ 0
\end{pmatrix},
\begin{pmatrix}
1/2 \\ 1 \\ 4
\end{pmatrix}
\bigg\}.
\end{align*}
\begin{theorem}\label{thm-1}
We have
\begin{align}
\sum_{i,j,k\ge 0}\frac{q^{\frac{1}{2}i^2+j^2+4k^2+ij+2jk+\frac{1}{2}i+j}}{(q;q)_i(q;q)_j(q^2;q^2)_k}
=
\frac{(q^8,q^{12},q^{20};q^{20})_\infty}{(q;q)_\infty},
\label{table2.1.1}
\\
\sum_{i,j,k\ge 0}\frac{q^{\frac{1}{2}i^2+j^2+4k^2+ij+2jk+\frac{1}{2}i+j+4k}}{(q;q)_i(q;q)_j(q^2;q^2)_k}
=\frac{(q^4,q^{16},q^{20};q^{20})_\infty}{(q;q)_\infty}.
\label{table2.1.2}
\end{align}
\end{theorem}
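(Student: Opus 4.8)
The plan is to prove both \eqref{table2.1.1} and \eqref{table2.1.2} by summing out the variable $i$ first, thereby reducing each triple sum to a double sum that, after the single dilation $q\mapsto q^2$, coincides with the known rank two identities \eqref{LW.rank2.3.28} and \eqref{LW.rank2.3.29}. The starting observation is that the $i$-dependent part of the exponent in \eqref{table2.1.1} is $\tfrac12 i^2+(j+\tfrac12)i=\binom{i}{2}+(j+1)i$, so the inner sum over $i$ is a pure instance of Euler's second identity in \eqref{euler}, namely $\sum_{i\ge 0}q^{\binom{i}{2}}(q^{j+1})^i/(q;q)_i=(-q^{j+1};q)_\infty$. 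What makes this clean is that the quadratic form has \emph{no} $ik$ cross term, so the argument $z=q^{j+1}$ is independent of $k$ and the surviving $j,k$-exponent $j^2+2jk+4k^2+j$ (together with the extra $4k$ in the case of \eqref{table2.1.2}) is left untouched.

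Next I would absorb the resulting infinite product into the $j$-summand. Writing $(-q^{j+1};q)_\infty=(-q;q)_\infty/(-q;q)_j$ and using $(q;q)_j(-q;q)_j=(q^2;q^2)_j$, the factor $1/(q;q)_j$ merges with $1/(-q;q)_j$ into $1/(q^2;q^2)_j$. Thus the left side of \eqref{table2.1.1} becomes
\begin{align*}
(-q;q)_\infty\sum_{j,k\ge 0}\frac{q^{j^2+2jk+4k^2+j}}{(q^2;q^2)_j(q^2;q^2)_k},
\end{align*}
and the left side of \eqref{table2.1.2} becomes the same expression with $4k$ added to the exponent. Now both $j$ and $k$ are carried by base $q^2$, which is precisely the shape required to match a dilated rank two identity.

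To finish, I would apply $q\mapsto q^2$ to \eqref{LW.rank2.3.28}: its exponent $2i^2+ij+\tfrac12 j^2+\tfrac12 j$ becomes $4i^2+2ij+j^2+j$ and its right side becomes $(q^8,q^{12},q^{20};q^{20})_\infty/(q^2;q^2)_\infty$. Relabelling $i\mapsto k$ makes the dilated exponent $4k^2+2jk+j^2+j$ identical to $j^2+2jk+4k^2+j$, so the displayed double sum evaluates to $(q^8,q^{12},q^{20};q^{20})_\infty/(q^2;q^2)_\infty$. Multiplying by $(-q;q)_\infty=(q^2;q^2)_\infty/(q;q)_\infty$ cancels the $(q^2;q^2)_\infty$ and yields exactly the right side of \eqref{table2.1.1}. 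The identity \eqref{table2.1.2} follows in the same way from the $q\mapsto q^2$ dilation of \eqref{LW.rank2.3.29}, whose dilated exponent $4i^2+2ij+j^2+4i+j$ matches $j^2+2jk+4k^2+4k+j$ after $i\mapsto k$, with right side $(q^4,q^{16},q^{20};q^{20})_\infty/(q;q)_\infty$ after the same cancellation.

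The crux here is conceptual rather than computational: one must recognize that $i$ should be eliminated first. Because $i$ couples only to $j$ and enters as $\binom{i}{2}+(j+1)i$, Euler's identity applies verbatim, and the telescoping $(q;q)_j(-q;q)_j=(q^2;q^2)_j$ promotes the remaining sum to base $q^2$, which is what ultimately produces the modulus-$20$ products. By contrast, a direct attempt to merge $j$ and $k$ through Lemma \ref{lem-12} or Lemma \ref{lem-13} fails, since the $j,k$-form $j^2+2jk+4k^2$ has unequal weights and is not a function of $j+2k$ alone, and moreover $j$ is still coupled to $i$ at that stage. Once the order of summation is chosen correctly, the only remaining tasks are the two routine exponent comparisons and the final Pochhammer cancellation, which I expect to go through without difficulty.
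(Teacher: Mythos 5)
Your proof is correct and is essentially the paper's own argument: the paper likewise sums over $i$ first via Euler's identity (introducing parameters $u,v,w$ and obtaining $(-uq^{1/2};q)_\infty\sum_{j,k}q^{j^2+2jk+4k^2}v^jw^k/((q;q)_j(q^2;q^2)_k(-uq^{1/2};q)_j)$), then specializes and invokes \eqref{LW.rank2.3.28}--\eqref{LW.rank2.3.29} under $q\mapsto q^2$. Your write-up just makes explicit the Pochhammer merging $(q;q)_j(-q;q)_j=(q^2;q^2)_j$ and the exponent matching that the paper leaves implicit.
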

\begin{proof}
We have
\begin{align}
&\sum_{i,j,k\ge 0}\frac{q^{\frac{1}{2}i^2+j^2+4k^2+ij+2jk}u^iv^jw^k}{(q;q)_i(q;q)_j(q^2;q^2)_k}\notag
\\
&=
\sum_{j,k\ge 0}\frac{q^{j^2+2jk+4k^2}v^jw^k}{(q;q)_j(q^2;q^2)_k}
\sum_{i\ge 0}\frac{q^{\frac{1}{2}(i^2-i)}(uq^{\frac{1}{2}+j})^i}{(q;q)_i}
\notag
\\
&=\sum_{j,k\ge 0}\frac{q^{j^2+2jk+4k^2}v^jw^k(-uq^{\frac{1}{2}+j};q)_\infty}{(q;q)_j(q^2;q^2)_k}  \quad \text{(by \eqref{euler})}\notag
\\
&=(-uq^\frac{1}{2};q)_\infty\sum_{j,k\ge 0}\frac{q^{j^2+2jk+4k^2}v^jw^k}{(q;q)_j(q^2;q^2)_k(-uq^\frac{1}{2};q)_j}\label{F2.1}.
\end{align}
Setting $(u,v,w)=(q^\frac{1}{2},q,1)$ and $(q^{\frac{1}{2}},q,q^4)$ in \eqref{F2.1}, by \eqref{LW.rank2.3.28} and \eqref{LW.rank2.3.29} we obtain \eqref{table2.1.1} and  \eqref{table2.1.2}, respectively.
\end{proof}

\subsubsection{Example 6}
This example corresponds to
\begin{align*}
A=\begin{pmatrix}
1 & 1 & 1\\
1 & 5 & 4\\
2 & 8 & 8
\end{pmatrix},\quad
AD=\begin{pmatrix}
    1 & 1 & 2\\
    1 & 5 & 8\\
    2 & 8 & 16
\end{pmatrix}
,\quad
b\in  \bigg\{
\begin{pmatrix}
1/2 \\ -1/2 \\ 0
\end{pmatrix},
\begin{pmatrix}
1/2 \\ 3/2 \\ 4
\end{pmatrix}
\bigg\}.
\end{align*}
\begin{theorem}\label{thm-6}
We have
\begin{align}
\sum_{i,j,k\ge 0}\frac{q^{\frac{1}{2}i^2+\frac{5}{2}j^2+8k^2+ij+2ik+8jk+\frac{1}{2}i-\frac{1}{2}j}}{(q;q)_i(q;q)_j(q^2;q^2)_k}
&=\frac{(q^4,q^6,q^{10};q^{10})_\infty}{(q;q)_\infty}, \label{table2.6.1}
\\
\sum_{i,j,k\ge 0}\frac{q^{\frac{1}{2}i^2+\frac{5}{2}j^2+8k^2+ij+2ik+8jk+\frac{1}{2}i+\frac{3}{2}j+4k}}{(q;q)_i(q;q)_j(q^2;q^2)_k}
&=\frac{(q^2,q^8,q^{10};q^{10})_\infty}{(q;q)_\infty}. \label{table2.6.2}
\end{align}
\end{theorem}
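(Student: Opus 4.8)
The plan is to reduce each of the two triple sums to a rank-two double sum of exactly the shape occurring in \eqref{LW.rank2.3.28} and \eqref{LW.rank2.3.29}, whose product sides are already the claimed right-hand sides of \eqref{table2.6.1} and \eqref{table2.6.2}. The symmetrizer forces $k$ to carry the base $q^2$ while the target identities live in base $q$, so the natural tool is Lemma \ref{lem-12}, which merges an index-$1$ variable and an index-$2$ variable into a single index-$1$ variable. Accordingly I would introduce $n:=j+2k$ and aim to rewrite the exponent so that, once the factor $q^{\binom{j}{2}}$ required by Lemma \ref{lem-12} is peeled off, the remainder depends on $j,k$ only through $n$.

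The crucial algebraic step is the rewriting (for the first sum)
\[
\tfrac12 i^2+\tfrac52 j^2+8k^2+ij+2ik+8jk+\tfrac12 i-\tfrac12 j=\binom{j}{2}+\tfrac12 i^2+\tfrac12 i+i(j+2k)+2(j+2k)^2,
\]
which is verified by expanding the right-hand side. The point is that the genuinely mixed pieces $8k^2,\,8jk,\,2ik$ reassemble into $2(j+2k)^2$ and $i(j+2k)$, so no residual dependence on $k$ beyond $n=j+2k$ survives. Granting this, I would factor out the $i$-sum, fix $n=j+2k$, and invoke the consequence of Lemma \ref{lem-12}, namely $\sum_{j+2k=n}q^{\binom{j}{2}}/\big((q;q)_j(q^2;q^2)_k\big)=1/(q;q)_n$, to collapse the inner sum. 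This produces $\sum_{i,n\ge0}q^{2n^2+in+\frac12 i^2+\frac12 i}/\big((q;q)_i(q;q)_n\big)$, which after swapping the names of the two dummy indices is precisely the left-hand side of \eqref{LW.rank2.3.28}; this gives \eqref{table2.6.1}.

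For \eqref{table2.6.2} the quadratic reorganization is identical, and only the linear terms change. Subtracting $\binom{j}{2}$ turns the linear part $\tfrac12 i+\tfrac32 j+4k$ into $\tfrac12 i+2j+4k=\tfrac12 i+2(j+2k)$, contributing an extra factor $q^{2n}$. The same application of Lemma \ref{lem-12} then yields $\sum_{i,n\ge0}q^{2n^2+in+\frac12 i^2+\frac12 i+2n}/\big((q;q)_i(q;q)_n\big)$, which matches the left-hand side of \eqref{LW.rank2.3.29} and hence gives \eqref{table2.6.2}.

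I expect the only genuine obstacle to be spotting the correct grouping: one must realize that it is $j$ and $k$ that should be merged (in contrast to Example~1, where the vanishing $(1,3)$-entry permitted a direct Euler summation over $i$, whereas here the $2ik$ term blocks that route), and that the prescribed shift vectors $b$ are exactly the ones making the residual linear exponent a function of $n$ alone. Once the regrouping above is found, everything else is the routine application of Lemma \ref{lem-12} followed by a direct appeal to \eqref{LW.rank2.3.28}–\eqref{LW.rank2.3.29}.
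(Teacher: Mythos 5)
Your proposal is correct and follows essentially the same route as the paper: both proofs merge $j$ and $k$ into $n=j+2k$ via Lemma \ref{lem-12} (after isolating the factor $q^{\binom{j}{2}}$), collapse the inner sum to $1/(q;q)_n$, and then identify the resulting double sums with \eqref{LW.rank2.3.28} and \eqref{LW.rank2.3.29}. The only cosmetic difference is that the paper carries generic parameters $u^iv^{j+2k}$ and specializes $(u,v)=(q^{1/2},1)$ and $(q^{1/2},q^2)$ at the end, whereas you substitute the linear terms directly.
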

\begin{proof}
We have
\begin{align}
&\sum_{i,j,k\ge 0}\frac{q^{\frac{1}{2}i^2+\frac{5}{2}j^2+8k^2+ij+2ik+8jk-\frac{1}{2}j}u^iv^{j+2k}}{(q;q)_i(q;q)_j(q^2;q^2)_k}
\notag
\\
&=\sum_{i\ge 0}\frac{q^{\frac{1}{2}i^2}u^i}{(q;q)_i}\sum_{m\ge 0}q^{2m^2+im}v^m\sum_{j+2k=m}\frac{q^{\frac{1}{2}(j^2-j)}}{(q;q)_j(q^2;q^2)_k}
\notag
\\
&=\sum_{i,m\ge 0}\frac{q^{\frac{1}{2}i^2+im+2m^2}u^iv^m}{(q;q)_i(q;q)_m}. \label{F2.6}
\end{align}
Here for the last equality we used Lemma \ref{lem-12}.

Setting $(u,v) = (q^{\frac{1}{2}},1)$ and $(q^{\frac{1}{2}},q^2)$, by \eqref{LW.rank2.3.28} and \eqref{LW.rank2.3.29} we obtain \eqref{table2.6.1} and \eqref{table2.6.2}, respectively.
\end{proof}

Since the proofs of some examples below are similar to the previous examples, we will omit some details.
\subsubsection{Example 7}
This example corresponds to
\begin{align*}
A=\begin{pmatrix}
1 & 1 & 0\\
1 & 8 & 1\\
0 & 2 & 1
\end{pmatrix},\quad
AD=\begin{pmatrix}
    1 & 1 & 0\\
    1 & 8 & 2\\
    0 & 2 & 2
\end{pmatrix}
,\quad
b\in  \bigg\{
\begin{pmatrix}
1/2 \\ 0 \\ 1
\end{pmatrix},
\begin{pmatrix}
1/2 \\ 4 \\ 1
\end{pmatrix}
\bigg\}.
\end{align*}
\begin{theorem}\label{thm-7}
We have
\begin{align}
\sum_{i,j,k\ge 0}\frac{q^{\frac{1}{2}i^2+4j^2+k^2+ij+2jk+\frac{1}{2}i+k}}{(q;q)_i(q;q)_j(q^2;q^2)_k}
&=\frac{(q^8,q^{12},q^{20};q^{20})_\infty}{(q;q)_\infty}, \label{table2.7.1}
\\
\sum_{i,j,k\ge 0}\frac{q^{\frac{1}{2}i^2+4j^2+k^2+ij+2jk+\frac{1}{2}i+4j+k}}{(q;q)_i(q;q)_j(q^2;q^2)_k}
&=\frac{(q^4,q^{16},q^{20};q^{20})_\infty}{(q;q)_\infty}. \label{table2.7.2}
\end{align}
\end{theorem}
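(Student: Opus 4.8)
The plan is to imitate the reduction carried out for Theorem \ref{thm-1}, since the quadratic form here has the same coupling pattern: the variable $i$ (with diagonal coefficient $\tfrac12$) interacts only with $j$ through the term $ij$, and $j$ interacts with $k$ through $2jk$. Accordingly, I would sum out $i$ first by means of Euler's identity, specialize one parameter to collapse the resulting infinite product, and then recognize the remaining double sum as a $q\mapsto q^2$ dilation of the rank two identities \eqref{LW.rank2.3.28}--\eqref{LW.rank2.3.29}.

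Concretely, I would attach tracking variables and consider
\[
\sum_{i,j,k\ge 0}\frac{q^{\frac12 i^2+4j^2+k^2+ij+2jk}\,u^i v^j w^k}{(q;q)_i(q;q)_j(q^2;q^2)_k}.
\]
Isolating the $i$-dependent part and writing $\tfrac12 i^2=\binom{i}{2}+\tfrac12 i$, the inner sum becomes $\sum_{i\ge 0}q^{\binom{i}{2}}(uq^{\frac12+j})^i/(q;q)_i=(-uq^{\frac12+j};q)_\infty$ by the second identity in \eqref{euler}. Factoring out $(-uq^{\frac12};q)_\infty$ leaves
\[
(-uq^{\frac12};q)_\infty\sum_{j,k\ge 0}\frac{q^{4j^2+2jk+k^2}\,v^j w^k}{(q;q)_j(q^2;q^2)_k(-uq^{\frac12};q)_j}.
\]
Setting $u=q^{\frac12}$ turns $(-uq^{\frac12};q)_j$ into $(-q;q)_j$, which combines with $(q;q)_j$ to give $(q^2;q^2)_j$, producing $(-q;q)_\infty\sum_{j,k\ge 0}q^{4j^2+2jk+k^2}v^j w^k/\big((q^2;q^2)_j(q^2;q^2)_k\big)$.

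At this point the double sum is a base-$Q$ series with $Q=q^2$, whose exponent is $2j^2+jk+\tfrac12 k^2$ together with the linear contributions coming from $v$ and $w$. I would then take $(v,w)=(1,q)$ to match the linear part $\tfrac12 i+k$ of \eqref{table2.7.1}, and $(v,w)=(q^4,q)$ to match $\tfrac12 i+4j+k$ of \eqref{table2.7.2}; with the identification $i\mapsto j$, $j\mapsto k$ these reproduce exactly the exponents of \eqref{LW.rank2.3.28} and \eqref{LW.rank2.3.29} in the base $Q$. Hence the two double sums equal $(Q^4,Q^6,Q^{10};Q^{10})_\infty/(Q;Q)_\infty$ and $(Q^2,Q^8,Q^{10};Q^{10})_\infty/(Q;Q)_\infty$, respectively. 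Finally, using $(-q;q)_\infty=(q^2;q^2)_\infty/(q;q)_\infty$ to cancel $(Q;Q)_\infty=(q^2;q^2)_\infty$ yields the claimed products $(q^8,q^{12},q^{20};q^{20})_\infty/(q;q)_\infty$ and $(q^4,q^{16},q^{20};q^{20})_\infty/(q;q)_\infty$.

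I do not expect a genuine obstacle here, as the argument is structurally identical to Theorem \ref{thm-1}; the computation is routine once the first summation is performed. The only points demanding care are the correct choice of the specializations $(v,w)$ so that the surviving linear terms agree with those in \eqref{LW.rank2.3.28}--\eqref{LW.rank2.3.29}, and the bookkeeping of the dilation $q\mapsto q^2$, which is what converts the modulus-$10$ products on the right of the rank two identities into the modulus-$20$ products appearing in \eqref{table2.7.1}--\eqref{table2.7.2}.
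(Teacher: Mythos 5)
Your proposal is correct and follows essentially the same route as the paper: sum out $i$ via Euler's identity to obtain \eqref{F2.7}, then specialize $(u,v,w)=(q^{1/2},1,q)$ and $(q^{1/2},q^4,q)$ so that the remaining double sum is the base-$q^2$ form of \eqref{LW.rank2.3.28} and \eqref{LW.rank2.3.29}. The only difference is that you make the $(-q;q)_j(q;q)_j=(q^2;q^2)_j$ combination and the $q\mapsto q^2$ dilation explicit, which the paper leaves implicit.
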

\begin{proof}
Summing over $i$ using \eqref{euler} first, we have
\begin{align}
\sum_{i,j,k\ge 0}\frac{q^{\frac{1}{2}i^2+4j^2+k^2+ij+2jk}u^iv^jw^k}{(q;q)_i(q;q)_j(q^2;q^2)_k}
=(-uq^{\frac{1}{2}};q)_\infty\sum_{j,k\ge 0}\frac{q^{4j^2+2jk+k^2}v^jw^k}{(q;q)_j(q^2;q^2)_k(-uq^{\frac{1}{2}};q)_j}. \label{F2.7}
\end{align}
Setting $(u,v,w)=(q^\frac{1}{2},1,q)$ and $(q^\frac{1}{2},q^4,q)$, and then using \eqref{LW.rank2.3.28}  and \eqref{LW.rank2.3.29}, we obtain \eqref{table2.7.1} and \eqref{table2.7.2}, respectively.
\end{proof}

\subsubsection{Example 10}
This example corresponds to
\begin{align*}
A=\begin{pmatrix}
2 & 1 & 1 \\
1 & 4 & 1 \\
2 & 2 & 2
\end{pmatrix},
\quad
AD=\begin{pmatrix}
    2 & 1 & 2\\
    1 & 4 & 2\\
    2 & 2 & 4
\end{pmatrix}
,\quad
b\in  \bigg\{
\begin{pmatrix}
0 \\ 0 \\ 1
\end{pmatrix},
\begin{pmatrix}
0 \\ 2 \\ 1
\end{pmatrix}
\bigg\}.
\end{align*}
\begin{theorem}\label{thm-10}
We have
\begin{align}
\sum_{i,j,k\ge 0}\frac{q^{i^2+2j^2+2k^2+ij+2ik+2jk+k}}{(q;q)_i(q;q)_j(q^2;q^2)_k}
&=\frac{(q^4,q^6,q^{10};q^{10})_\infty}{(q;q)_\infty}, \label{table2.10.1}
\\
\sum_{i,j,k\ge 0}\frac{q^{i^2+2j^2+2k^2+ij+2ik+2jk+2j+k}}{(q;q)_i(q;q)_j(q^2;q^2)_k}
&=
\frac{(q^2,q^8,q^{10};q^{10})_\infty}{(q;q)_\infty}.
\label{table2.10.2}
\end{align}
\end{theorem}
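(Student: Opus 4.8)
The plan is to collapse the triple sum to a double sum of the shape appearing in \eqref{LW.rank2.3.28}--\eqref{LW.rank2.3.29} by eliminating the variable $k$ through Lemma \ref{lem-12}. The guiding observation is that every cross term involving $i$ or $k$ is governed by the combination $i+2k$: indeed $i^2+2ik+2k^2=\tfrac12(i+2k)^2+\tfrac12 i^2$ and $ij+2jk=j(i+2k)$, while the $(q;q)_i(q^2;q^2)_k$ part of the denominator is exactly the one handled by Lemma \ref{lem-12}. This strongly suggests the substitution $n=i+2k$.

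First I would introduce a single free parameter for the $j$-variable and treat both identities at once, writing $G(v)=\sum_{i,j,k\ge 0} q^{i^2+2j^2+2k^2+ij+2ik+2jk+k}v^j/\big((q;q)_i(q;q)_j(q^2;q^2)_k\big)$, so that \eqref{table2.10.1} is $G(1)$ and \eqref{table2.10.2} is $G(q^2)$. Here the linear term $+k$ and the absence of an $i$-linear term, forced by $b_1=0$ and $b_3=1$ in both quadruples, are built in; only the $j$-dependence, where $b_2\in\{0,2\}$, is left generic. I would then regroup the exponent under $n=i+2k$: the pure $(i,k)$-part contributes $\tfrac12 n^2+\tfrac12 i^2$, the $j$ cross terms contribute $jn$, and the linear term $k=\tfrac12 n-\tfrac12 i$ supplies precisely the $-\tfrac12 i$ needed to turn the leftover $\tfrac12 i^2$ into $\tfrac12 i^2-\tfrac12 i=\binom{i}{2}$.

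After this regrouping the exponent reads $2j^2+jn+\tfrac12 n^2+\tfrac12 n+\binom{i}{2}$, so the inner sum over $i+2k=n$ becomes $\sum_{i+2k=n} q^{\binom{i}{2}}/\big((q;q)_i(q^2;q^2)_k\big)$, which is $1/(q;q)_n$ by Lemma \ref{lem-12}. This leaves the double sum $G(v)=\sum_{j,n\ge 0} q^{2j^2+jn+\frac12 n^2+\frac12 n}v^j/\big((q;q)_j(q;q)_n\big)$. Matching against the known evaluations, after renaming so that the variable carrying coefficient $2$ plays the role of $i$ and $n$ plays the role of $j$, setting $v=1$ gives exactly the left side of \eqref{LW.rank2.3.28} and hence \eqref{table2.10.1}, while setting $v=q^2$ produces the extra linear term $+2j$ and matches \eqref{LW.rank2.3.29}, giving \eqref{table2.10.2}.

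I do not expect a genuine obstacle here; the reduction is essentially forced once the right substitution is spotted. The only point requiring care, and the real crux of the argument, is that $v$ must be kept generic while $u$ (the $i$-weight) and $w$ (the $k$-weight) are fixed to $1$ and $q$ before Lemma \ref{lem-12} is applied: since $i$ and $k$ are the variables being convolved, it is precisely the presence of the linear $+k$ term that forces the inner weight to equal $q^{\binom{i}{2}}$, and without it the fiber sum would not simplify. Everything else is routine bookkeeping, and a quick check of the first few coefficients (both sides begin $1+q+2q^2+3q^3+4q^4+\cdots$ for \eqref{table2.10.1}) confirms the computation.
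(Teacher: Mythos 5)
Your proposal is correct and follows essentially the same route as the paper: both proofs convolve the $i$ and $k$ variables via the substitution $m=i+2k$, apply Lemma \ref{lem-12} to collapse the fiber sum $\sum_{i+2k=m}q^{\binom{i}{2}}/((q;q)_i(q^2;q^2)_k)=1/(q;q)_m$, and then invoke \eqref{LW.rank2.3.28} and \eqref{LW.rank2.3.29}. The only cosmetic difference is that the paper keeps a generic weight $u^{i+2k}$ and specializes $u=q^{1/2}$ at the end, whereas you build the linear term $+k$ in from the start; the computations are identical.
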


\begin{proof}
Summing over $i,k$ with $i+2k=m$ using Lemma \ref{lem-12} first, we have
\begin{align}
\sum_{i,j,k\ge 0}\frac{q^{i^2+2j^2+2k^2+ij+2ik+2jk-\frac{1}{2}i}u^{i+2k}v^j}{(q;q)_i(q;q)_j(q^2;q^2)_k}
=\sum_{j,m\ge 0}\frac{q^{2j^2+jm+\frac{1}{2}m^2}u^mv^j}{(q;q)_j(q;q)_m}. \label{F2.10}
\end{align}
Setting $(u,v)=(q^{\frac{1}{2}},1)$ and $(q^{\frac{1}{2}},q^2)$, and then using \eqref{LW.rank2.3.28} and \eqref{LW.rank2.3.29}, we obtain \eqref{table2.10.1} and \eqref{table2.10.2}, respectively.
\end{proof}

\subsection{Examples 2, 16, 18 and 19}
We will reduce the Nahm sums in these examples to the following double Nahm sums:
\begin{align}
    &\sum_{i,j\ge 0}\frac{q^{3i^2+4ij+4j^2-2i}}{(q^4;q^4)_i(q^4;q^4)_j}
    =
    \frac{J_{14}J_{28}^2J_{2,28}}{J_{1,28}J_{4,28}J_{8,28}J_{13,28}}, \quad \text{(\cite[Eq.\ (3.70)]{Wang-rank2})}
    \label{LW.rank2.3.70}
    \\
    &\sum_{i,j\ge 0}\frac{q^{3i^2+4ij+4j^2}}{(q^4;q^4)_i(q^4;q^4)_j}
    =
    \frac{J_{14}J_{28}^2J_{6,28}}{J_{3,28}J_{4,28}J_{11,28}J_{12,28}}, \quad \text{(\cite[Eq.\ (3.71)]{Wang-rank2})}
    \label{LW.rank2.3.71}
    \\
    &\sum_{i,j\ge 0}\frac{q^{3i^2+4ij+4j^2+2i+4j}}{(q^4;q^4)_i(q^4;q^4)_j}
    =
    \frac{J_{14}J_{28}^2J_{10,28}}{J_{5,28}J_{8,28}J_{9,28}J_{12,28}}. \quad \text{(\cite[Eq.\ (3.72)]{Wang-rank2})}
    \label{LW.rank2.3.72}
\end{align}

\subsubsection{Example 2}
This example corresponds to
\begin{align*}
A=\begin{pmatrix}
1 & 1 & 0 \\ 1 & 3 & 1 \\ 0 & 2 & 2
\end{pmatrix},\quad
AD=\begin{pmatrix}
    1 & 1 & 0\\
    1 & 3 & 2\\
    0 & 2 & 4
\end{pmatrix}
,\quad
b \in \bigg\{
\begin{pmatrix}
1/2 \\ -1 \\ 0
\end{pmatrix},
\begin{pmatrix}
1/2 \\ 0 \\ 0
\end{pmatrix},
\begin{pmatrix}
1/2 \\ 1 \\ 2
\end{pmatrix}
\bigg\}.
\end{align*}
\begin{theorem}\label{thm-2}
We have
\begin{align}
\sum_{i,j,k\ge 0}\frac{q^{i^2+3j^2+4k^2+2ij+4jk+i-2j}}{(q^2;q^2)_i(q^2;q^2)_j(q^4;q^4)_k}
=\frac{J_4J_{14}^2J_{28}J_{2,28}}{J_2J_{1,28}J_{4,28}J_{8,28}J_{13,28}},
\label{table2.2.1}
\\
\sum_{i,j,k\ge 0}\frac{q^{i^2+3j^2+4k^2+2ij+4jk+i}}{(q^2;q^2)_i(q^2;q^2)_j(q^4;q^4)_k}
=\frac{J_4J_{14}^2J_{28}J_{6,28}}{J_2J_{3,28}J_{4,28}J_{11,28}J_{12,28}},
\label{table2.2.2}
\\
\sum_{i,j,k\ge 0}\frac{q^{i^2+3j^2+4k^2+2ij+4jk+i+2j+4k}}{(q^2;q^2)_i(q^2;q^2)_j(q^4;q^4)_k}
=
\frac{J_4J_{14}^2J_{28}J_{10,28}}{J_2J_{5,28}J_{8,28}J_{9,28}J_{12,28}}.
\label{table2.2.3}
\end{align}
\end{theorem}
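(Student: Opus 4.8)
The plan is to reduce each of the three triple sums in Theorem~\ref{thm-2} to one of the rank-two double sums \eqref{LW.rank2.3.70}--\eqref{LW.rank2.3.72}, exactly mirroring the strategy used for Examples 6 and 10. The governing observation is that the quadratic form $i^2+3j^2+4k^2+2ij+4jk$ can be rewritten as $i^2+2ij+(\text{terms in }j,k)$, and the summation variable $i$ enters through $(q^2;q^2)_i$. So first I would introduce generating variables and sum over $i$ using the first Euler identity in \eqref{euler} (in its base-$q^2$ incarnation, replacing $q$ by $q^2$), which will produce an infinite product factor $(-uq^{1+j};q^2)_\infty$ times a residual double sum over $j,k$. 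This is the same maneuver as in \eqref{F2.1} and \eqref{F2.7}. The specialization $u=q$ will be chosen so that $(-uq^{1+j};q^2)_\infty=(-q^{2+2j};q^2)_\infty$ combines cleanly with the remaining $(q^2;q^2)_j$ in the denominator.

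The key algebraic step will be to recognize that, after summing over $i$, the surviving factor $\frac{1}{(q^2;q^2)_j(-q^{2+2j};q^2)_\infty}$ collapses. Indeed $(-q^{2+2j};q^2)_\infty=(-q^2;q^2)_\infty/(-q^2;q^2)_j$, and one would like the combination to reproduce a denominator of the shape $(q^4;q^4)_j$ so as to match the right-hand sides of \eqref{LW.rank2.3.70}--\eqref{LW.rank2.3.72}, whose denominators are $(q^4;q^4)_i(q^4;q^4)_j$. The relation $(q^2;q^2)_j(-q^2;q^2)_j=(q^4;q^4)_j$ is precisely what makes this work, converting the $j$-denominator to $(q^4;q^4)_j$ after extracting the global constant $(-q^2;q^2)_\infty$. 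Thus the residual double sum should become, after relabeling $k$ as the second rank-two variable, a sum over $j,k\ge 0$ with denominator $(q^4;q^4)_j(q^4;q^4)_k$ and quadratic exponent matching $3j^2+4jk+4k^2$ (up to the linear shifts dictated by $b$), which is exactly the form appearing in \eqref{LW.rank2.3.70}--\eqref{LW.rank2.3.72}. The prefactor $(-q^2;q^2)_\infty=J_4/J_2$ then accounts for the discrepancy between the $\frac{1}{J_2}$ appearing on the left (from $(q^2;q^2)$-denominators) and the product sides of the rank-two identities.

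Concretely, I would prove the generating-function identity
\begin{align*}
\sum_{i,j,k\ge 0}\frac{q^{i^2+3j^2+4k^2+2ij+4jk}u^i v^j w^k}{(q^2;q^2)_i(q^2;q^2)_j(q^4;q^4)_k}
=(-uq;q^2)_\infty\sum_{j,k\ge 0}\frac{q^{3j^2+4jk+4k^2}v^j w^k}{(q^2;q^2)_j(q^4;q^4)_k(-uq;q^2)_j},
\end{align*}
and then specialize $(u,v,w)$ to $(q,q^{-2},1)$, $(q,1,1)$, and $(q,q^2,q^4)$ to hit the three linear shifts $i-2j$, $i$, and $i+2j+4k$ respectively. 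After substitution $u=q$ the factor $(-uq;q^2)_\infty=(-q^2;q^2)_\infty$ and $(-uq;q^2)_j=(-q^2;q^2)_j$, so using $(q^2;q^2)_j(-q^2;q^2)_j=(q^4;q^4)_j$ the residual sums become precisely \eqref{LW.rank2.3.70}--\eqref{LW.rank2.3.72} multiplied by $(-q^2;q^2)_\infty=J_4/J_2$. I expect the main obstacle to be bookkeeping: verifying that the linear terms in $j$ and $k$ produced by the chosen specializations of $v,w$ align exactly with the shifts $-2j$ (for \eqref{table2.2.1}), none (for \eqref{table2.2.2}), and $2j+4k$ (for \eqref{table2.2.3}), and confirming that the half-integer exponent $\tfrac12 i$-type contribution from $b$ is correctly absorbed. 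There is also a subtlety in matching the base of the Euler summation: since the $i$-denominator is $(q^2;q^2)_i$ rather than $(q;q)_i$, one must apply \eqref{euler} with $q\mapsto q^2$, so the completed square $i^2+2ij=(i+j)^2-j^2$ feeds a $q^{(i)(i-1)}$-free Euler sum and the leftover $-j^2$ must be merged into the $3j^2$ to give the correct $3j^2$ versus $4j^2$ coefficient — tracking this correctly is where an error would most likely creep in, so I would double-check the quadratic completion before specializing.
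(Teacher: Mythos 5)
Your proposal is correct and follows essentially the same route as the paper: the paper's proof establishes exactly the generating-function identity you display (its \eqref{F2.2}), then specializes $(u,v,w)=(q,q^{-2},1)$, $(q,1,1)$, $(q,q^2,q^4)$ and invokes \eqref{LW.rank2.3.70}--\eqref{LW.rank2.3.72}, with the factor $(-q^2;q^2)_j$ absorbed via $(q^2;q^2)_j(-q^2;q^2)_j=(q^4;q^4)_j$ just as you describe. (Only a cosmetic slip: the product produced by the Euler summation is $(-uq^{1+2j};q^2)_\infty$, not $(-uq^{1+j};q^2)_\infty$, consistent with your later displayed formula.)
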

\begin{proof}
Summing over $i$ using \eqref{euler} first, we have
\begin{align}
&\sum_{i,j,k\ge 0}\frac{q^{i^2+3j^2+4k^2+2ij+4jk}u^iv^jw^k}{(q^2;q^2)_i(q^2;q^2)_j(q^4;q^4)_k}
\notag \\
&=(-qu;q^2)_\infty\sum_{j,k\ge 0}\frac{q^{3j^2+4jk+4k^2}v^jw^k}{(q^2;q^2)_j(q^4;q^4)_k(-qu;q^2)_j}. \label{F2.2}
\end{align}
Setting $(u,v,w)=(q,q^{-2},1)$, $(q,1,1)$ and $(q,q^2,q^4)$ in \eqref{F2.2}, by \eqref{LW.rank2.3.70}--\eqref{LW.rank2.3.72} we obtain \eqref{table2.2.1}--\eqref{table2.2.3}, respectively.
\end{proof}

\subsubsection{Example 16}
This example corresponds to
\begin{align*}
A=\begin{pmatrix}
1 & 1 & 0\\
1 & 4 & 1\\
0 & 2 & 3/2
\end{pmatrix},\quad
AD=\begin{pmatrix}
    1 & 1 & 0\\
    1 & 4 & 2\\
    0 & 2 & 3
\end{pmatrix}
,\quad
b\in  \bigg\{
\begin{pmatrix}
1/2\\ 0 \\-1
\end{pmatrix},
\begin{pmatrix}
1/2\\ 0 \\ 0
\end{pmatrix},
\begin{pmatrix}
1/2\\ 2 \\ 1
\end{pmatrix}
\bigg\}.
\end{align*}
\begin{theorem}
We have
\begin{align}
\sum_{i,j,k\ge 0}\frac{q^{i^2+4j^2+3k^2+2ij+4jk+i-2k}}{(q^2;q^2)_i(q^2;q^2)_j(q^4;q^4)_k}
&=\frac{J_4J_{14}J_{28}^2J_{2,28}}{J_2J_{1,28}J_{4,28}J_{8,28}J_{13,28}},
\label{table2.16.1}
\\
\sum_{i,j,k\ge 0}\frac{q^{i^2+4j^2+3k^2+2ij+4jk+i}}{(q^2;q^2)_i(q^2;q^2)_j(q^4;q^4)_k}
&=\frac{J_4J_{14}J_{28}^2J_{6,28}}{J_2J_{3,28}J_{4,28}J_{11,28}J_{12,28}},
\label{table2.16.2}
\\
\sum_{i,j,k\ge 0}\frac{q^{i^2+4j^2+3k^2+2ij+4jk+i+4j+2k}}{(q^2;q^2)_i(q^2;q^2)_j(q^4;q^4)_k}
&=\frac{J_4J_{14}J_{28}^2J_{10,28}}{J_2J_{5,28}J_{8,28}J_{9,28}J_{12,28}}.
\label{table2.16.3}
\end{align}
\end{theorem}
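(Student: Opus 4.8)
The plan is to follow the exact template established in the preceding theorems for Example 2, since Example 16 has the same symmetrizer $d=(1,1,2)$ and the same off-diagonal coupling structure. The three identities \eqref{table2.16.1}--\eqref{table2.16.3} have right-hand sides identical to those of \eqref{table2.2.1}--\eqref{table2.2.3} except for the swap $J_{14}^2J_{28}\leftrightarrow J_{14}J_{28}^2$, which is exactly the kind of discrepancy one expects to absorb into a change of the extra factor produced when summing out one variable. First I would introduce the generating auxiliary function
\begin{align*}
\sum_{i,j,k\ge 0}\frac{q^{i^2+4j^2+3k^2+2ij+4jk}u^iv^jw^k}{(q^2;q^2)_i(q^2;q^2)_j(q^4;q^4)_k},
\end{align*}
and sum over $i$ first. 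Since the $i$-dependence is $q^{i^2+2ij}u^i/(q^2;q^2)_i = q^{i^2}(uq^{2j})^i/(q^2;q^2)_i$, applying the second Euler identity in \eqref{euler} with base $q^2$ collapses the inner sum to $(-uq^{1+2j};q^2)_\infty$, giving
\begin{align*}
(-qu;q^2)_\infty\sum_{j,k\ge 0}\frac{q^{4j^2+4jk+3k^2}v^jw^k}{(q^2;q^2)_j(q^4;q^4)_k(-qu;q^2)_j}.
\end{align*}

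Next I would specialize $u=q$, which makes the prefactor $(-q^2;q^2)_\infty$ and replaces $(-qu;q^2)_j=(-q^2;q^2)_j$ in the denominator. The crucial step is then to recognize that the resulting double sum over $j,k$ must be matched against \eqref{LW.rank2.3.70}--\eqref{LW.rank2.3.72}, whose summand is $q^{3i^2+4ij+4j^2}/((q^4;q^4)_i(q^4;q^4)_j)$. Here the quadratic form and the moduli differ, so a direct match is not immediate; instead I expect to first eliminate the factor $(-q^2;q^2)_j$ in the denominator by a further auxiliary summation or by invoking Lemma \ref{lem-13} (or Lemma \ref{lem-12}) to repackage $1/\big((q^2;q^2)_j(-q^2;q^2)_j\big)$ together with part of the $k$-sum into a single $(q^4;q^4)$-indexed factor. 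Concretely, since $(q^2;q^2)_j(-q^2;q^2)_j=(q^4;q^4)_j$, the $j$-summand simplifies cleanly once $u=q$, and the quadratic form $4j^2+4jk+3k^2$ has the requisite shape after a possible interchange of the roles of $j$ and $k$.

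I would then set the remaining parameters $(v,w)$ to the three values analogous to Example 2 --- namely $(q^{-2},1)$, $(1,1)$, and $(q^2,q^4)$ up to the adjustments dictated by the exponents $i-2k$, $i$, and $i+4j+2k$ in the three linear parts $b$ --- and read off the products from \eqref{LW.rank2.3.70}--\eqref{LW.rank2.3.72}, finally accounting for the prefactor $(-q^2;q^2)_\infty = J_4/J_2$ to reconcile the $J$-quotients on the right-hand sides. The main obstacle I anticipate is the bookkeeping in this last reconciliation: verifying that the extra Euler factor $(-q^2;q^2)_\infty$ together with the base-change from $(q^4;q^4)$ to $(q^2;q^2)$ summands produces exactly the swap $J_{14}^2J_{28}\to J_{14}J_{28}^2$ distinguishing \eqref{table2.16.1}--\eqref{table2.16.3} from the Example 2 products, and confirming that the linear exponent shifts $-2k$ and $+4j+2k$ correspond precisely to the parameter choices in \eqref{LW.rank2.3.70} and \eqref{LW.rank2.3.72}. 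Since the authors remark that such proofs are ``similar to the previous examples'' and omit details, I expect the reduction to go through routinely once the correct $(u,v,w)$ specializations are pinned down.
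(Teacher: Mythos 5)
Your proposal is correct and follows essentially the same route as the paper: sum over $i$ via Euler's identity to get the prefactor $(-uq;q^2)_\infty$ and the factor $(-uq;q^2)_j$ in the denominator, set $u=q$ so that $(q^2;q^2)_j(-q^2;q^2)_j=(q^4;q^4)_j$, match the resulting double sum against \eqref{LW.rank2.3.70}--\eqref{LW.rank2.3.72} with the two indices interchanged, and absorb the prefactor $J_4/J_2$. The precise specializations are $(u,v,w)=(q,1,q^{-2})$, $(q,1,1)$ and $(q,q^4,q^2)$, which is exactly the adjustment your hedged $(v,w)$ values require after the $j\leftrightarrow k$ swap you anticipated.
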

\begin{proof}
Summing over $i$ first using \eqref{euler}, we have
\begin{align}
&\sum_{i,j,k\ge 0}\frac{q^{i^2+4j^2+3k^2+2ij+4jk}u^iv^jw^k}{(q^2;q^2)_i(q^2;q^2)_j(q^4;q^4)_k}  \nonumber \\
&=(-uq;q^2)_\infty\sum_{j,k\ge 0}\frac{q^{4j^2+4jk+3k^2}v^jw^k}{(q^2;q^2)_j(q^4;q^4)_k(-uq;q^2)_j}.
\label{F2.16}
\end{align}
Setting $(u,v,w)=(q,1,q^{-2})$, $(q,1,1)$ and $(q,q^4,q^2)$ in \eqref{F2.16}, by \eqref{LW.rank2.3.70}--\eqref{LW.rank2.3.72}, we obtain \eqref{table2.16.1}--\eqref{table2.16.3}, respectively.
\end{proof}

\subsubsection{Example 18}
This example corresponds to
\begin{align*}
A=\begin{pmatrix}
3/2&1 & 1\\
1 & 3 & 2\\
2 & 4 & 4
\end{pmatrix},\quad
AD=\begin{pmatrix}
    3/2&1 & 2\\
    1 & 3 & 4\\
    2 & 4 & 8
\end{pmatrix}
,\quad
b\in  \bigg\{
\begin{pmatrix}
-1/2\\-1/2\\ 0
\end{pmatrix},
\begin{pmatrix}
0 \\-1/2\\ 0
\end{pmatrix},
\begin{pmatrix}
1/2\\1/2\\ 2
\end{pmatrix}
\bigg\}.
\end{align*}
\begin{theorem}
We have
\begin{align}
\sum_{i,j,k\ge 0}\frac{q^{3i^2+6j^2+16k^2+4ij+8ik+16jk-2i-2j}}{(q^4;q^4)_i(q^4;q^4)_j(q^8;q^8)_k}
&=\frac{J_{14}J_{28}^2J_{2,28}}{J_{1,28}J_{4,28}J_{8,28}J_{13,28}},
\label{table2.18.1}
\\
\sum_{i,j,k\ge 0}\frac{q^{3i^2+6j^2+16k^2+4ij+8ik+16jk-2j}}{(q^4;q^4)_i(q^4;q^4)_j(q^8;q^8)_k}
&=\frac{J_{14}J_{28}^2J_{6,28}}{J_{3,28}J_{4,28}J_{11,28}J_{12,28}},
\label{table2.18.2}
\\
\sum_{i,j,k\ge 0}\frac{q^{3i^2+6j^2+16k^2+4ij+8ik+16jk+2i+2j+8k}}{(q^4;q^4)_i(q^4;q^4)_j(q^8;q^8)_k}
&=\frac{J_{14}J_{28}^2J_{10,28}}{J_{5,28}J_{8,28}J_{9,28}J_{12,28}}.
\label{table2.18.3}
\end{align}
\end{theorem}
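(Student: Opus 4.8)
The plan is to mirror the reduction strategy used for Examples 2 and 16, but to exploit a feature special to this example: the product sides of \eqref{table2.18.1}--\eqref{table2.18.3} coincide \emph{exactly} with those of the double-sum identities \eqref{LW.rank2.3.70}--\eqref{LW.rank2.3.72}, with no extra theta-quotient prefactor. This signals that the triple sum should collapse to the rank-two sum through an exact coefficient identity rather than an Euler-type infinite product, so I would reduce it using Lemma \ref{lem-12} (applied with $q\mapsto q^4$) rather than Euler's identity \eqref{euler}.

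The key is to pair the variables $j$ and $k$, guided by the cross terms $4ij+8ik=4i(j+2k)$: setting $n=j+2k$ makes the coupling to $i$ depend only on $n$. The supporting algebraic fact is the decomposition
\[
6j^2+16jk+16k^2=4(j+2k)^2+2j^2=4n^2+2j^2,
\]
so that the $(j,k)$-part of the exponent, once the linear term $-2j$ is included, becomes $4n^2+(2j^2-2j)$, and $q^{2j^2-2j}$ is exactly the weight appearing in Lemma \ref{lem-12} with base $q^4$. Introducing auxiliary variables $u,v$, I would therefore first establish the master identity
\[
\sum_{i,j,k\ge 0}\frac{q^{3i^2+6j^2+16k^2+4ij+8ik+16jk-2j}\,u^i v^{j+2k}}{(q^4;q^4)_i(q^4;q^4)_j(q^8;q^8)_k}
=\sum_{i,n\ge 0}\frac{q^{3i^2+4in+4n^2}\,u^i v^n}{(q^4;q^4)_i(q^4;q^4)_n},
\]
where the inner sum over $j+2k=n$ collapses to $1/(q^4;q^4)_n$ by Lemma \ref{lem-12}.

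With the master identity in hand, the three cases follow by specialization. Taking $(u,v)=(q^{-2},1)$ produces the extra linear term $-2i$ and yields \eqref{table2.18.1} via \eqref{LW.rank2.3.70}; taking $(u,v)=(1,1)$ yields \eqref{table2.18.2} via \eqref{LW.rank2.3.71}; and taking $(u,v)=(q^2,q^4)$ produces $2i+4n=2i+4j+8k$, which combines with the built-in $-2j$ to give the required $2i+2j+8k$ and yields \eqref{table2.18.3} via \eqref{LW.rank2.3.72}. The only point that needs care---and the one genuine obstacle---is confirming that each of the three vectors $b$ contributes, after the substitution $q\mapsto q^4$, precisely the $-2j$ needed to complete $2j^2$ to the Lemma \ref{lem-12} weight $2j^2-2j$: this is immediate for \eqref{table2.18.1} and \eqref{table2.18.2}, while for \eqref{table2.18.3} one must first rewrite $2j+8k=4n-2j$ to expose the hidden $-2j$. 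Once this alignment is verified, the reduction is entirely mechanical.
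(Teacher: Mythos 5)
Your proposal is correct and follows essentially the same route as the paper: the paper also sums over $j,k$ with $j+2k=m$ via Lemma \ref{lem-12}, obtains exactly your master identity (its equation \eqref{F2.18}), and specializes $(u,v)=(q^{-2},1)$, $(1,1)$, $(q^2,q^4)$ to invoke \eqref{LW.rank2.3.70}--\eqref{LW.rank2.3.72}. Your algebraic checks (the decomposition $6j^2+16jk+16k^2=4(j+2k)^2+2j^2$ and the alignment of the $-2j$ with the Lemma \ref{lem-12} weight at base $q^4$) are all accurate.
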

\begin{proof}
Summing over $j,k$ with $j+2k=m$ using Lemma \ref{lem-12} first, we have
\begin{align}
\sum_{i,j,k\ge 0}\frac{q^{3i^2+6j^2+16k^2+4ij+8ik+16jk-2j}u^iv^{j+2k}}{(q^4;q^4)_i(q^4;q^4)_j(q^8;q^8)_k}
=\sum_{i,m\ge 0}\frac{q^{3i^2+4mi+4m^2}u^iv^m}{(q^4;q^4)_i(q^4;q^4)_m}.
\label{F2.18}
\end{align}
Setting $(u,v)=(q^{-2},1)$, $(1,1)$ and $(q^2,q^4)$ in \eqref{F2.18},  and then using  \eqref{LW.rank2.3.70}--\eqref{LW.rank2.3.72}, we obtain \eqref{table2.18.1}--\eqref{table2.18.3}, respectively.
\end{proof}

\subsubsection{Example 19}
This example corresponds to
\begin{align*}
A=\begin{pmatrix}
2 & 1 & 1\\
1 & 5/2 & 3/2\\
2 & 3 & 3
\end{pmatrix},\quad
AD=\begin{pmatrix}
    2 & 1 & 2\\
    1 &5/2& 3\\
    2 & 3 & 6
\end{pmatrix}
,\quad
b\in  \bigg\{
\begin{pmatrix}
0 \\ -1 \\ -1
\end{pmatrix},
\begin{pmatrix}
0 \\-1/2 \\ 0
\end{pmatrix},
\begin{pmatrix}
1 \\ 0 \\ 1
\end{pmatrix}
\bigg\}.
\end{align*}
\begin{theorem}
We have
\begin{align}
\sum_{i,j,k\ge 0}\frac{q^{4i^2+5j^2+12k^2+4ij+8ik+12jk-4j-4k}}{(q^4;q^4)_i(q^4;q^4)_j(q^8;q^8)_k}
&=\frac{J_{14}J_{28}^2J_{2,28}}{J_{1,28}J_{4,28}J_{8,28}J_{13,28}},
\label{table2.19.1}
\\
\sum_{i,j,k\ge 0}\frac{q^{4i^2+5j^2+12k^2+4ij+8ik+12jk-2j}}{(q^4;q^4)_i(q^4;q^4)_j(q^8;q^8)_k}
&=\frac{J_{14}J_{28}^2J_{6,28}}{J_{3,28}J_{4,28}J_{11,28}J_{12,28}},
\label{table2.19.2}
\\
\sum_{i,j,k\ge 0}\frac{q^{4i^2+5j^2+12k^2+4ij+8ik+12jk+4i+4k}}{(q^4;q^4)_i(q^4;q^4)_j(q^8;q^8)_k}
&=\frac{J_{14}J_{28}^2J_{10,28}}{J_{5,28}J_{8,28}J_{9,28}J_{12,28}}.
\label{table2.19.3}
\end{align}
\end{theorem}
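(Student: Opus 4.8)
The plan is to treat Example 19 by the same mechanism already used for Examples 2, 16 and 18 in this subsection, namely to collapse the triple sum onto the double Nahm sums \eqref{LW.rank2.3.70}--\eqref{LW.rank2.3.72}. The quadratic form $4i^2+5j^2+12k^2+4ij+8ik+12jk$ is set up so that $j$ and $k$ enter only through the combination $m=j+2k$, precisely as in the reduction \eqref{F2.18}; this is the signal to sum over $j,k$ first by means of Lemma \ref{lem-12}.

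Concretely, I would first establish the master reduction
\begin{align}
\sum_{i,j,k\ge 0}\frac{q^{4i^2+5j^2+12k^2+4ij+8ik+12jk-2j}\,u^iv^{j+2k}}{(q^4;q^4)_i(q^4;q^4)_j(q^8;q^8)_k}
=\sum_{i,m\ge 0}\frac{q^{4i^2+3m^2+4im}\,u^iv^m}{(q^4;q^4)_i(q^4;q^4)_m}. \label{plan-F2.19}
\end{align}
To prove it I would split the factor $q^{2j^2-2j}$ off the summand, using $5j^2-2j=(2j^2-2j)+3j^2$, and check that the remaining exponent depends on $j,k$ only through $m=j+2k$; indeed $4i^2+3j^2+12k^2+4ij+8ik+12jk=4i^2+3m^2+4im$ when $m=j+2k$. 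Holding $m$ fixed and summing over all $j,k$ with $j+2k=m$, the inner sum $\sum_{j+2k=m}q^{2j^2-2j}/\big((q^4;q^4)_j(q^8;q^8)_k\big)$ equals $1/(q^4;q^4)_m$ by Lemma \ref{lem-12} with $q$ replaced by $q^4$, which gives \eqref{plan-F2.19}.

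The right-hand side of \eqref{plan-F2.19} is exactly the quadratic form $3m^2+4mi+4i^2$ appearing in \eqref{LW.rank2.3.70}--\eqref{LW.rank2.3.72} after the relabelling $(m,i)\mapsto(i,j)$, so it remains only to choose $u,v$ matching the linear parts. Combining the built-in $-2j$ with the contribution $u^iv^{j+2k}$ (writing $u=q^s$, $v=q^t$, so that the left-hand linear term becomes $si+(t-2)j+2tk$), the choices $(u,v)=(1,q^{-2})$, $(1,1)$ and $(q^4,q^2)$ reproduce the linear parts $-4j-4k$, $-2j$ and $4i+4k$ of \eqref{table2.19.1}, \eqref{table2.19.2} and \eqref{table2.19.3}, and turn the right-hand side of \eqref{plan-F2.19} into the left-hand sides of \eqref{LW.rank2.3.70}, \eqref{LW.rank2.3.71} and \eqref{LW.rank2.3.72}. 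Reading off the product sides then finishes the three identities.

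I do not anticipate a genuine obstacle, since the argument is a direct transcription of the Example 18 computation; the only point that genuinely requires checking is that, for each of the three vectors $b$, the linear term of the exponent becomes a function of $i$ and $m=j+2k$ once the residual $+2j$ from extracting $q^{2j^2-2j}$ is taken into account. This is what allows the triple sum to collapse to a two-variable sum, and it is a feature of these particular $b$: a generic linear term would leave an irreducible dependence on $j$ and $k$ separately and obstruct the application of Lemma \ref{lem-12}.
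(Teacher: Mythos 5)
Your proposal is correct and coincides with the paper's own proof: the paper establishes exactly the reduction \eqref{F2.19} via Lemma \ref{lem-12} (with $q\mapsto q^4$, splitting off $q^{2j^2-2j}$) and then specializes $(u,v)=(1,q^{-2})$, $(1,1)$, $(q^4,q^2)$ to invoke \eqref{LW.rank2.3.70}--\eqref{LW.rank2.3.72}. Nothing further is needed.
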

\begin{proof}
Summing over $j,k$ with $j+2k=m$ using Lemma \ref{lem-12} first, we have
\begin{align}
\sum_{i,j,k\ge 0}\frac{q^{4i^2+5j^2+12k^2+4ij+8ik+12jk-2j}u^iv^{j+2k}}{(q^4;q^4)_i(q^4;q^4)_j(q^8;q^8)_k}
=\sum_{i,m\ge 0}\frac{q^{4i^2+4im+3m^2}u^iv^m}{(q^4;q^4)_i(q^4;q^4)_m}.
\label{F2.19}
\end{align}
Setting $(u,v)=(1,q^{-2})$, $(1,1)$ and $(q^4,q^2)$ in \eqref{F2.19}, and then using \eqref{LW.rank2.3.70}--\eqref{LW.rank2.3.72}, we obtain \eqref{table2.19.1}--\eqref{table2.19.3}, respectively.
\end{proof}

\subsection{Examples 3, 4 and 9}
We will reduce the Nahm sums in these examples to the following double Nahm sums:
\begin{align}
    &\sum_{i,j\ge 0}\frac{q^{i^2+ij+\frac{1}{2}j^2-i+\frac{1}{2}j}}{(q;q)_i(q;q)_j}
    =
    2\frac{(q^4;q^4)_\infty}{(q;q)_\infty}, \quad \text{(\cite[Eq.\ (3.5)]{Wang-rank2})}\label{LW.rank2.3.5}
    \\
    &\sum_{i,j\ge 0}\frac{q^{2i^2+2ij+j^2}}{(q^2;q^2)_i(q^2;q^2)_j}
    =
    \frac{1}{(q,q^4,q^7;q^8)_\infty}, \quad \text{(\cite[Eq.\ (3.6)]{Wang-rank2})} \label{LW.rank2.3.6}
    \\
    &\sum_{i,j\ge 0}\frac{q^{i^2+ij+\frac{1}{2}j^2+\frac{1}{2}j}}{(q;q)_i(q;q)_j}
    =
    \frac{(q^2;q^2)^3_\infty}{(q;q)^2_\infty(q^4;q^4)_\infty}, \quad \text{(\cite[Eq.\ (3.7)]{Wang-rank2})} \label{LW.rank2.3.7}
    \\
    &\sum_{i,j\ge 0}\frac{q^{i^2+ij+\frac{1}{2}j^2+i+\frac{1}{2}j}}{(q;q)_i(q;q)_j}
    =
    \frac{(q^4;q^4)_\infty}{(q;q)_\infty},  \quad \text{(\cite[Eq.\ (3.8)]{Wang-rank2})} \label{LW.rank2.3.8}
    \\
    &\sum_{i,j\ge 0}\frac{q^{2i^2+2ij+j^2+2i+2j}}{(q^2;q^2)_i(q^2;q^2)_j}
    =
    \frac{1}{(q^3,q^4,q^5;q^8)_\infty}. \quad \text{(\cite[Eq.\ (3.9)]{Wang-rank2})} \label{LW.rank2.3.9}
\end{align}

\subsubsection{Example 3}
This example corresponds to
\begin{align*}
&A=\begin{pmatrix}
1 & 1 & 1 \\
1 & 3 & 2 \\
2 & 4 & 4
\end{pmatrix},\quad
AD=\begin{pmatrix}
    1 & 1 & 2\\
    1 & 3 & 4\\
    2 & 4 & 8
\end{pmatrix},\nonumber
\\
&b\in  \bigg\{
\begin{pmatrix}
0 \\ -1/2 \\ 0
\end{pmatrix},
\begin{pmatrix}
1/2 \\ -3/2 \\ -2
\end{pmatrix},
\begin{pmatrix}
1/2 \\ -1/2 \\ 0
\end{pmatrix},
\begin{pmatrix}
1/2 \\ 1/2 \\ 2
\end{pmatrix},
\begin{pmatrix}
1 \\ 1/2 \\ 2
\end{pmatrix}
\bigg\}.
\end{align*}
\begin{theorem}\label{thm-3}
We have
\begin{align}
\sum_{i,j,k\ge 0}\frac{q^{i^2+3j^2+8k^2+2ij+4ik+8jk-j}}{(q^2;q^2)_i(q^2;q^2)_j(q^4;q^4)_k}
&=
\frac{1}{(q,q^4,q^7;q^8)_\infty}, \label{table2.3.1}
\\
\sum_{i,j,k\ge 0}\frac{q^{i^2+3j^2+8k^2+2ij+4ik+8jk+i-3j-4k}}{(q^2;q^2)_i(q^2;q^2)_j(q^4;q^4)_k}
&=2\frac{(q^2;q^2)_\infty}{(q^8;q^8)_\infty}, \label{table2.3.2}
\\
\sum_{i,j,k\ge 0}\frac{q^{i^2+3j^2+8k^2+2ij+4ik+8jk+i-j}}{(q^2;q^2)_i(q^2;q^2)_j(q^4;q^4)_k}
&=
\frac{(q^4;q^4)^3_\infty}{(q^2;q^2)_\infty(q^8;q^8)_\infty},
\label{table2.3.3}
\\
\sum_{i,j,k\ge 0}\frac{q^{i^2+3j^2+8k^2+2ij+4ik+8jk+i+j+4k}}{(q^2;q^2)_i(q^2;q^2)_j(q^4;q^4)_k}
&=\frac{(q^8;q^8)_\infty}{(q^2;q^2)_\infty}, \label{table2.3.4}
\\
\sum_{i,j,k\ge 0}\frac{q^{i^2+3j^2+8k^2+2ij+4ik+8jk+2i+j+4k}}{(q^2;q^2)_i(q^2;q^2)_j(q^4;q^4)_k}
&=\frac{1}{(q^3,q^4,q^5;q^8)_\infty}. \label{table2.3.5}
\end{align}

\begin{proof}
Summing over $j,k$ with $j+2k=m$ using Lemma \ref{lem-12} first, we have
\begin{align}
\sum_{i,j,k\ge 0}\frac{q^{i^2+3j^2+8k^2+2ij+4ik+8jk-j}u^iv^{j+2k}}{(q^2;q^2)_i(q^2;q^2)_j(q^4;q^4)_k}
=\sum_{i,m\ge 0}\frac{q^{i^2+2im+2m^2}u^iv^m}{(q^2;q^2)_i(q^2;q^2)_m}. \label{F2.3}
\end{align}
Setting $(u,v)=(1,1)$, $(q,q^{-2})$, $(q,1)$, $(q,q^2)$ and $(q^2,q^2)$ in  \eqref{F2.3},  and then using  \eqref{LW.rank2.3.5}--\eqref{LW.rank2.3.9}, we obtain \eqref{table2.3.1}--\eqref{table2.3.5}.
\end{proof}
\end{theorem}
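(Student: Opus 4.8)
The key reduction is to collapse the three-fold sum into a double sum by combining the $j$ and $k$ summation variables. The crucial observation is that in the quadratic form $i^2+3j^2+8k^2+2ij+4ik+8jk$, if we set $m=j+2k$ then the portion involving $j,k$ organizes neatly. First I would verify the exponent bookkeeping: writing $m=j+2k$, one computes $2im+2m^2 = 2i(j+2k)+2(j+2k)^2 = 2ij+4ik+2j^2+8jk+8k^2$, so that $i^2+2im+2m^2 = i^2+2ij+4ik+2j^2+8jk+8k^2$. Comparing with the actual exponent $i^2+3j^2+8k^2+2ij+4ik+8jk$, the discrepancy is exactly $3j^2-2j^2 = j^2$, which together with the $-j$ linear term gives $j^2-j = 2\binom{j}{2}$. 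This is precisely the weight that Lemma \ref{lem-12} (with $q$ replaced by $q^2$, i.e.\ the base-$q^2$ version obtained by sending $q\mapsto q^2$ in \eqref{eq-lem}) is designed to absorb, since $\sum_{j+2k=m} q^{2\binom{j}{2}}/((q^2;q^2)_j(q^4;q^4)_k) = 1/(q^2;q^2)_m$. This yields \eqref{F2.3} after introducing the auxiliary variables $u^i v^{j+2k}$.

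Next I would carry the linear terms through the substitution. Each of the five identities \eqref{table2.3.1}--\eqref{table2.3.5} differs from the generating identity \eqref{F2.3} only by a monomial factor in $q$ attached to $i$ and to $m=j+2k$. For instance, the shift $+i$ in the exponent corresponds to multiplying by the factor $u=q$, and the shift $+2j+4k=2(j+2k)=2m$ corresponds to $v=q^2$; a bare $+j$ term is the subtle case, handled by noting $j=m-2k$ and tracking the resulting $v$-power carefully. After specializing $(u,v)$ to $(1,1)$, $(q,q^{-2})$, $(q,1)$, $(q,q^2)$, $(q^2,q^2)$ respectively, the right-hand side of \eqref{F2.3} becomes exactly the double Nahm sum appearing in one of \eqref{LW.rank2.3.5}--\eqref{LW.rank2.3.9}, whose product evaluations are quoted from \cite{Wang-rank2}. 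The product sides then follow immediately by invoking those known identities.

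The main obstacle, and the step requiring the most care, is the linear-term accounting rather than the quadratic reduction. Because the substitution $m=j+2k$ is not invertible as an identification of monomials—a pure $j$-shift and a pure $k$-shift both feed into $m$ but with different multiplicities—one must confirm that each target exponent in \eqref{table2.3.1}--\eqref{table2.3.5} can genuinely be written using only the two free parameters $u^i$ and $v^{j+2k}$ after factoring out the fixed weight $q^{-j}$ built into \eqref{F2.3}. In particular the cases \eqref{table2.3.4} and \eqref{table2.3.5}, which carry a lone $+j$ alongside the $+4k$, need the identity $j+4k = (j+2k)+2k$ to be reconciled against the available $v^{j+2k}$ slot; I expect this to work precisely because the residual $q^{-j}$ weight in the generating function conspires with the Lemma \ref{lem-12} collapse to leave a clean function of $m$. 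Once the specializations are checked to land on the five quoted double sums, the proof is complete.
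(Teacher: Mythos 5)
Your proposal is correct and follows essentially the same route as the paper: the substitution $m=j+2k$ together with the base-$q^2$ case of Lemma \ref{lem-12} (absorbing the weight $q^{j^2-j}$) yields \eqref{F2.3}, and the five specializations of $(u,v)$ land exactly on \eqref{LW.rank2.3.5}--\eqref{LW.rank2.3.9} with $q\mapsto q^2$. The ``subtle case'' you flag for \eqref{table2.3.4}--\eqref{table2.3.5} resolves just as you anticipated: since the base exponent already carries $-j$, the increment is $2j+4k=2(j+2k)=2m$, so only the slots $u^i$ and $v^{m}$ are ever needed.
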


\subsubsection{Example 4}
This example corresponds to
\begin{align*}
&A=\begin{pmatrix}
1 & 1 & 0\\
1 & 4 & 1\\
0 & 2 & 1
\end{pmatrix},\quad
AD=\begin{pmatrix}
    1 & 1 & 0\\
    1 & 4 & 2\\
    0 & 2 & 2
\end{pmatrix}
,\nonumber
\\
&b\in  \bigg\{
\begin{pmatrix}
1/2 \\ -2 \\ 1
\end{pmatrix},
\begin{pmatrix}
1/2 \\ 0 \\ 0
\end{pmatrix},
\begin{pmatrix}
1/2 \\ 0 \\ 1
\end{pmatrix},
\begin{pmatrix}
1/2 \\ 2 \\ 1
\end{pmatrix},
\begin{pmatrix}
1/2 \\ 2 \\ 2
\end{pmatrix}
\bigg\}.
\end{align*}
\begin{theorem}\label{thm-4}
We have
\begin{align}
\sum_{i,j,k\ge 0}\frac{q^{\frac{1}{2}i^2+2j^2+k^2+ij+2jk+\frac{1}{2}i-2j+k}}{(q;q)_i(q;q)_j(q^2;q^2)_k}
&=2\frac{(q^8;q^8)_\infty}{(q;q)_\infty}, \label{table2.4.1}
\\
\sum_{i,j,k\ge 0}\frac{q^{\frac{1}{2}i^2+2j^2+k^2+ij+2jk+\frac{1}{2}i}}{(q;q)_i(q;q)_j(q^2;q^2)_k}
&=\frac{(-q;q)_\infty}{(q,q^4,q^7;q^8)_\infty}, \label{table2.4.2}
\\
\sum_{i,j,k\ge 0}\frac{q^{\frac{1}{2}i^2+2j^2+k^2+ij+2jk+\frac{1}{2}i+k}}{(q;q)_i(q;q)_j(q^2;q^2)_k}
&=\frac{(q^4;q^4)^3_\infty}{(q;q)_\infty(q^2;q^2)_\infty(q^8;q^8)_\infty},
\label{table2.4.3}
\\
\sum_{i,j,k\ge 0}\frac{q^{\frac{1}{2}i^2+2j^2+k^2+ij+2jk+\frac{1}{2}i+2j+k}}{(q;q)_i(q;q)_j(q^2;q^2)_k}
&=\frac{(q^8;q^8)_\infty}{(q;q)_\infty},
\label{table2.4.4}
\\
\sum_{i,j,k\ge 0}\frac{q^{\frac{1}{2}i^2+2j^2+k^2+ij+2jk+\frac{1}{2}i+2j+2k}}{(q;q)_i(q;q)_j(q^2;q^2)_k}
&=\frac{(-q;q)_\infty}{(q^3,q^4,q^5;q^8)_\infty}.
\label{table2.4.5}
\end{align}
\end{theorem}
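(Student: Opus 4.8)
The plan is to follow the template already used for Examples 1 and 7: introduce auxiliary variables $u,v,w$ into the triple sum, perform the inner sum over $i$ by Euler's identity \eqref{euler}, and then specialize $(u,v,w)$ so that the surviving double sum matches one of the rank two identities \eqref{LW.rank2.3.5}--\eqref{LW.rank2.3.9}. First I would form the generating function
\begin{align*}
\sum_{i,j,k\ge 0}\frac{q^{\frac{1}{2}i^2+2j^2+k^2+ij+2jk}u^iv^jw^k}{(q;q)_i(q;q)_j(q^2;q^2)_k}
\end{align*}
and carry out the sum over $i$. Writing $\frac{1}{2}i^2+ij=\binom{i}{2}+i\left(\frac{1}{2}+j\right)$, the second identity in \eqref{euler} gives $\sum_{i\ge 0}q^{\binom{i}{2}}(uq^{\frac{1}{2}+j})^i/(q;q)_i=(-uq^{\frac{1}{2}+j};q)_\infty$, so the triple sum collapses to
\begin{align*}
(-uq^{\frac{1}{2}};q)_\infty\sum_{j,k\ge 0}\frac{q^{2j^2+2jk+k^2}v^jw^k}{(q;q)_j(q^2;q^2)_k(-uq^{\frac{1}{2}};q)_j}.
\end{align*}

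The key specialization is $u=q^{\frac{1}{2}}$. Then the prefactor becomes $(-q;q)_\infty=(q^2;q^2)_\infty/(q;q)_\infty$ and, since $(q;q)_j(-q;q)_j=(q^2;q^2)_j$, the double sum reduces to the single master series
\begin{align*}
D(v,w):=\sum_{j,k\ge 0}\frac{q^{2j^2+2jk+k^2}v^jw^k}{(q^2;q^2)_j(q^2;q^2)_k}.
\end{align*}
Setting $u=q^{\frac{1}{2}}$ also produces exactly the factor $q^{\frac{1}{2}i}$ matching the common first entry $b_1=\frac{1}{2}$ of all five vectors $b$, while $v$ and $w$ supply the remaining $j$- and $k$-linear terms. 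Thus \eqref{table2.4.1}--\eqref{table2.4.5} correspond to the five choices $(v,w)=(q^{-2},q),\,(1,1),\,(1,q),\,(q^2,q),\,(q^2,q^2)$, respectively (the choice $v=q^{-2}$ causes no trouble since the term $2j^2$ dominates).

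It then remains to evaluate $D(v,w)$ at these five points. The unifying observation is that under $q\mapsto q^2$ the three base-$q$ identities \eqref{LW.rank2.3.5}, \eqref{LW.rank2.3.7}, \eqref{LW.rank2.3.8} acquire the quadratic form $2i^2+2ij+j^2$, which is precisely the form appearing in the base-$q^2$ identities \eqref{LW.rank2.3.6} and \eqref{LW.rank2.3.9}; after relabeling $(i,j)\mapsto(j,k)$ all five supply values of $D(v,w)$. Concretely, \eqref{LW.rank2.3.6} gives $D(1,1)$ and \eqref{LW.rank2.3.9} gives $D(q^2,q^2)$ directly, while \eqref{LW.rank2.3.5}, \eqref{LW.rank2.3.7}, \eqref{LW.rank2.3.8} (after $q\mapsto q^2$) give $D(q^{-2},q)$, $D(1,q)$, $D(q^2,q)$. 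Multiplying each value by the prefactor $(-q;q)_\infty=(q^2;q^2)_\infty/(q;q)_\infty$ and simplifying—for instance $(-q;q)_\infty\cdot 2(q^8;q^8)_\infty/(q^2;q^2)_\infty=2(q^8;q^8)_\infty/(q;q)_\infty$—yields the five product sides. I expect no genuine obstacle here: once the master series $D(v,w)$ is isolated the work is routine. The only point demanding care is the bookkeeping, namely keeping straight which rank two identity (and whether a $q\mapsto q^2$ substitution is needed) matches each specialization, together with the cancellation of $(-q;q)_\infty$ against the $(q^2;q^2)_\infty$ denominators appearing in the product sides.
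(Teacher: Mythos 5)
Your proposal is correct and follows essentially the same route as the paper: sum over $i$ with Euler's identity, specialize $u=q^{1/2}$ so the denominator $(q;q)_j(-q;q)_j$ collapses to $(q^2;q^2)_j$, and then match the resulting double sum at $(v,w)=(q^{-2},q),(1,1),(1,q),(q^2,q),(q^2,q^2)$ against \eqref{LW.rank2.3.5}--\eqref{LW.rank2.3.9} (with $q\mapsto q^2$ where needed). Your bookkeeping of which rank two identity, with or without the base change, feeds each case is accurate and merely makes explicit what the paper leaves implicit.
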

\begin{proof}
Summing over $i$ using \eqref{euler} first, we have
\begin{align}
\sum_{i,j,k\ge 0}\frac{q^{\frac{1}{2}i^2+2j^2+k^2+ij+2jk}u^iv^jw^k}{(q;q)_i(q;q)_j(q^2;q^2)_k}
=(-uq^\frac{1}{2};q)_\infty\sum_{j,k\ge 0}\frac{q^{2j^2+2jk+k^2}v^jw^k}{(q;q)_j(q^2;q^2)_k(-uq^\frac{1}{2};q)_j}. \label{F2.4}
\end{align}
Setting $(u,v,w)=(q^{\frac{1}{2}},q^{-2},q)$, $(q^{\frac{1}{2}},1,1)$, $(q^{\frac{1}{2}},1,q)$, $(q^{\frac{1}{2}},q^{2},q)$ and $(q^{\frac{1}{2}},q^2,q^2)$ in \eqref{F2.4}, by \eqref{LW.rank2.3.5}--\eqref{LW.rank2.3.9} we obtain \eqref{table2.4.1}--\eqref{table2.4.5}, respectively.
\end{proof}

\subsubsection{Example 9}
This example corresponds to
\begin{align*}
&A=\begin{pmatrix}
2 & 1 & 1\\
1 & 2 & 1\\
2 & 2 & 2
\end{pmatrix},\quad
AD=\begin{pmatrix}
    2 & 1 & 2\\
    1 & 2 & 2\\
    2 & 2 & 4
\end{pmatrix}
,\nonumber
\\
& b\in  \bigg\{
\begin{pmatrix}
-1 \\ 0 \\ 1
\end{pmatrix},
\begin{pmatrix}
-1/2 \\ 0 \\ 0
\end{pmatrix}, \begin{pmatrix}
0 \\ -1 \\ 1
\end{pmatrix}, \begin{pmatrix}
0 \\ -1/2 \\ 0
\end{pmatrix},
\begin{pmatrix}
0 \\ 0 \\ 1
\end{pmatrix},
\begin{pmatrix}
0 \\ 1 \\ 1
\end{pmatrix},
\begin{pmatrix}
1 \\ 0 \\ 1
\end{pmatrix},
\begin{pmatrix}
1/2 \\ 1 \\ 2
\end{pmatrix}, \begin{pmatrix}
1 \\ 1/2 \\2
\end{pmatrix}
\bigg\}.
\end{align*}
\begin{theorem}\label{thm-9}
We have
\begin{align}
\sum_{i,j,k\ge 0}\frac{q^{2i^2+2j^2+4k^2+2ij+4ik+4jk-2i+2k}}{(q^2;q^2)_i(q^2;q^2)_j(q^4;q^4)_k}
&=2\frac{(q^8;q^8)_\infty}{(q^2;q^2)_\infty}, \label{table2.9.1}
\\
\sum_{i,j,k\ge 0}\frac{q^{2i^2+2j^2+4k^2+2ij+4ik+4jk-j}}{(q^2;q^2)_i(q^2;q^2)_j(q^4;q^4)_k}
&=\frac{1}{(q,q^4,q^7;q^8)_\infty}, \label{table2.9.2}
\\
\sum_{i,j,k\ge 0}\frac{q^{2i^2+2j^2+4k^2+2ij+4ik+4jk+2k}}{(q^2;q^2)_i(q^2;q^2)_j(q^4;q^4)_k}
&=\frac{(q^4;q^4)^3_\infty}{(q^2;q^2)_\infty^2(q^8;q^8)_\infty},
\label{table2.9.5}
\\
\sum_{i,j,k\ge 0}\frac{q^{2i^2+2j^2+4k^2+2ij+4ik+4jk+2i+2k}}{(q^2;q^2)_i(q^2;q^2)_j(q^4;q^4)_k}
&=\frac{(q^8;q^8)_\infty}{(q^2;q^2)_\infty}, \label{table2.9.6}
\\
\sum_{i,j,k\ge 0}\frac{q^{2i^2+2j^2+4k^2+2ij+4ik+4jk+2i+j+4k}}{(q^2;q^2)_i(q^2;q^2)_j(q^4;q^4)_k}
&=\frac{1}{(q^3,q^4,q^5;q^8)_\infty}. \label{table2.9.8}
\end{align}
\end{theorem}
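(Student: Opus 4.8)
The plan is to follow the template already used for Examples 1--4: encode all five sums in a single master generating function carrying auxiliary variables $u,v$, collapse two of the three summation indices with Lemma~\ref{lem-12}, and then specialize to the known rank two identities \eqref{LW.rank2.3.5}--\eqref{LW.rank2.3.9}. First I would introduce
\[
G(u,v):=\sum_{i,j,k\ge 0}\frac{q^{2i^2+2j^2+4k^2+2ij+4ik+4jk-j}\,u^i v^{j+2k}}{(q^2;q^2)_i(q^2;q^2)_j(q^4;q^4)_k},
\]
where the extra linear term $-j$ is inserted on purpose. Setting $m=j+2k$ and substituting $k=(m-j)/2$, a short computation shows that the quadratic form $2i^2+2j^2+4k^2+2ij+4ik+4jk$ equals $2i^2+2im+m^2+j^2$, so that $Q-j=2i^2+2im+m^2+(j^2-j)$; in particular, for each fixed $m$ the pair $(j,k)$ carries exactly the weight $q^{j^2-j}$.

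Next I would sum over $j,k$ subject to $j+2k=m$ by Lemma~\ref{lem-12} with $q$ replaced by $q^2$, namely $\sum_{j+2k=m}q^{j^2-j}/\big((q^2;q^2)_j(q^4;q^4)_k\big)=1/(q^2;q^2)_m$. This collapses $G(u,v)$ to the double Nahm sum
\[
\sum_{i,m\ge 0}\frac{q^{2i^2+2im+m^2}\,u^i v^m}{(q^2;q^2)_i(q^2;q^2)_m},
\]
which, up to interchanging the two indices, is the very family encountered in the proof of Theorem~\ref{thm-3}. Writing $u=q^\alpha$ and $v=q^\beta$ turns its exponent into $2i^2+2im+m^2+\alpha i+\beta m$, so it is evaluated directly by \eqref{LW.rank2.3.6} and \eqref{LW.rank2.3.9} (which are already at base $q^2$) and by \eqref{LW.rank2.3.5}, \eqref{LW.rank2.3.7}, \eqref{LW.rank2.3.8} after the rescaling $q\mapsto q^2$.

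Finally I would specialize $(u,v)$ to $(q^{-2},q)$, $(1,1)$, $(1,q)$, $(q^2,q)$, $(q^2,q^2)$, which produce the linear parts $-2i+m$, $0$, $m$, $2i+m$, $2i+2m$ in the reduced sum and hence, via \eqref{LW.rank2.3.5}, \eqref{LW.rank2.3.6}, \eqref{LW.rank2.3.7}, \eqref{LW.rank2.3.8}, \eqref{LW.rank2.3.9} respectively, yield \eqref{table2.9.1}, \eqref{table2.9.2}, \eqref{table2.9.5}, \eqref{table2.9.6}, \eqref{table2.9.8}. Since the substantive content is already carried by the rank two identities, which I treat as known, there is no genuine obstacle here; the only point demanding care is the bookkeeping verifying that the built-in $-j$ together with each choice of $(u,v)$ reproduces precisely the five target linear exponents $-2i+2k$, $-j$, $2k$, $2i+2k$, $2i+j+4k$ of the original triple sums, and that the rescaled product sides match the stated right-hand sides.
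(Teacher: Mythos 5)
Your proposal is correct and coincides with the paper's own proof: the authors form exactly the same master series with the built-in $-j$, collapse $j+2k=m$ via Lemma~\ref{lem-12} (at base $q^2$) to reach $\sum_{i,m\ge 0}q^{2i^2+2im+m^2}u^iv^m/\big((q^2;q^2)_i(q^2;q^2)_m\big)$, and then specialize $(u,v)=(q^{-2},q),(1,1),(1,q),(q^2,q),(q^2,q^2)$ against \eqref{LW.rank2.3.5}--\eqref{LW.rank2.3.9}. Your bookkeeping of the quadratic form and of which rank-two identities need the rescaling $q\mapsto q^2$ is accurate, so nothing further is required.
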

Interchanging $i$ with $j$, we obtain identities which confirm modularity of the remaining cases.
\begin{proof}
Summing over $j,k$ with $j+2k=m$ using Lemma \ref{lem-12} first, we have
\begin{align}
\sum_{i,j,k\ge 0}\frac{q^{2i^2+2j^2+4k^2+2ij+4ik+4jk-j}u^{i}v^{j+2k}}{(q^2;q^2)_i(q^2;q^2)_j(q^4;q^4)_k}
=\sum_{i,m\ge 0}\frac{q^{2i^2+2mi+m^2}u^iv^m}{(q^2;q^2)_i(q^2;q^2)_m}.
\label{F2.9}
\end{align}
Setting $(u,v)=(q^{-2},q)$, $(1,1)$, $(1,q)$, $(q^{2},q)$ and $(q^{2},q^2)$ in \eqref{F2.9}, by \eqref{LW.rank2.3.5}--\eqref{LW.rank2.3.9} we obtain  \eqref{table2.9.1}--\eqref{table2.9.8}, respectively.
\end{proof}

\subsection{Examples 5, 8, 14 and 17}
We will reduce the Nahm sums in these examples to the following double Nahm sums \cite{Wang-rank2}:
\begin{align}
&\sum_{i,j\ge 0}\frac{q^{2i^2+j^2+2ij}}{(q;q)_i(q;q)_j}
    =
    \frac{(q^3,q^4,q^7;q^7)_\infty}{(q;q)_\infty}, \quad \text{(\cite[Eq.\ (3.41)]{Wang-rank2})} \label{LW.rank2.3.41}
    \\
    &\sum_{i,j\ge 0}\frac{q^{2i^2+j^2+2ij+i}}{(q;q)_i(q;q)_j}
    =
    \frac{(q^2,q^5,q^7;q^7)_\infty}{(q;q)_\infty}, \quad \text{(\cite[Eq.\ (3.42)]{Wang-rank2})} \label{LW.rank2.3.42}
    \\
    &\sum_{i,j\ge 0}\frac{q^{2i^2+j^2+2ij+2i+j}}{(q;q)_i(q;q)_j}
    =
    \frac{(q,q^6,q^7;q^7)_\infty}{(q;q)_\infty}. \quad \text{(\cite[Eq.\ (3.43)]{Wang-rank2})} \label{LW.rank2.3.43}
\end{align}
\subsubsection{Example 5}
This example corresponds to
\begin{align*}
A=\begin{pmatrix}
1 & 1 & 0\\
1 & 4 & 2\\
0 & 4 & 4
\end{pmatrix},
\quad
AD=\begin{pmatrix}
    1 & 1 & 0\\
    1 & 4 & 4\\
    0 & 4 & 8
\end{pmatrix}
,\quad
b\in  \bigg\{
\begin{pmatrix}
1/2 \\ 0 \\ 0
\end{pmatrix},
\begin{pmatrix}
1/2 \\ 0 \\ 2
\end{pmatrix},
\begin{pmatrix}
1/2 \\ 2 \\ 4
\end{pmatrix}
\bigg\}.
\end{align*}
\begin{theorem}\label{thm-5}
We have
\begin{align}
\sum_{i,j,k\ge 0}\frac{q^{\frac{1}{2}i^2+2j^2+4k^2+ij+4jk+\frac{1}{2}i}}{(q;q)_i(q;q)_j(q^2;q^2)_k}
&=\frac{(q^6,q^8,q^{14};q^{14})_\infty}{(q;q)_\infty}, \label{table2.5.1}
\\
\sum_{i,j,k\ge 0}\frac{q^{\frac{1}{2}i^2+2j^2+4k^2+ij+4jk+\frac{1}{2}i+2k}}{(q;q)_i(q;q)_j(q^2;q^2)_k}
&=\frac{(q^4,q^{10},q^{14};q^{14})_\infty}{(q;q)_\infty}, \label{table2.5.2}
\\
\sum_{i,j,k\ge 0}\frac{q^{\frac{1}{2}i^2+2j^2+4k^2+ij+4jk+\frac{1}{2}i+2j+4k}}{(q;q)_i(q;q)_j(q^2;q^2)_k}
&=\frac{(q^2,q^{12},q^{14};q^{14})_\infty}{(q;q)_\infty}. \label{table2.5.3}
\end{align}
\end{theorem}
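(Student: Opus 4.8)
The plan is to follow the same reduction strategy used in the preceding examples (Theorems \ref{thm-1} through \ref{thm-9}), exploiting the block structure that the symmetrizer $D=\diag(1,1,2)$ imposes on $AD$. Looking at $AD=\bigl(\begin{smallmatrix}1&1&0\\1&4&4\\0&4&8\end{smallmatrix}\bigr)$, the variable $i$ couples only to $j$, while $j$ and $k$ couple to each other through the term $4jk$ with $8k^2$ on the diagonal. The first step is to sum over $i$ using Euler's identity \eqref{euler}, exactly as in \eqref{F2.4}: introducing formal variables $u,v,w$, the inner $i$-sum $\sum_i q^{\frac{1}{2}(i^2-i)}(uq^{\frac{1}{2}+j})^i/(q;q)_i$ evaluates to $(-uq^{\frac{1}{2}+j};q)_\infty$, which pulls out a factor $(-uq^{\frac{1}{2}};q)_\infty$ and shifts the $j$-summand by inserting $1/(-uq^{\frac{1}{2}};q)_j$ in the denominator.

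The key move specific to this example is that the exponent of $(q^2;q^2)_k$ together with the $jk$ and $k^2$ terms should collapse under Lemma \ref{lem-13}. The quadratic form restricted to $(j,k)$ is $2j^2+4k^2+4jk=2(j^2+2jk+2k^2)$; writing $m=j+2k$ this is not quite the right substitution, so instead I anticipate applying Lemma \ref{lem-13} in the variables that pair $(q^2;q^2)_j$-type and $(q^2;q^2)_k$-type factors. Concretely, after the $i$-summation I expect the double sum over $j,k$ to match, after setting $(u,v,w)$ appropriately, the generating identity of Lemma \ref{lem-13} (with its $i^2+j^2-i$ exponent and $q^2$-bases) so that the pair $(j,k)$ fuses into a single index. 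This reduces the triple sum to a rank-two Nahm sum of the shape $\sum_{i,n}q^{2i^2+n^2+2in+\cdots}/((q;q)_i(q;q)_n)$, precisely the family \eqref{LW.rank2.3.41}--\eqref{LW.rank2.3.43} whose products are the $(q^a,q^{7-a},q^7;q^7)_\infty/(q;q)_\infty$ appearing on the right-hand sides.

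With the reduction in hand, the proof concludes by specialization: I would substitute three choices of $(u,v,w)$ (matching the three vectors $b$, namely the linear-term data $(\tfrac12,0,0)$, $(\tfrac12,0,2)$, $(\tfrac12,2,4)$) into the reduced double sum, and read off \eqref{table2.5.1}, \eqref{table2.5.2}, \eqref{table2.5.3} from \eqref{LW.rank2.3.41}, \eqref{LW.rank2.3.42}, \eqref{LW.rank2.3.43} respectively. The main obstacle is getting the bookkeeping of the linear terms exactly right: the shifts $2k$, $2j+4k$ in the numerators must be carried consistently through both the Euler summation and the application of Lemma \ref{lem-13}, and one must verify that each $b$-vector indeed lands on the linear term $i$, respectively $2i+j$, demanded by \eqref{LW.rank2.3.42} and \eqref{LW.rank2.3.43}. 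I expect this to require checking that the half-integer shift $\tfrac12 i$ absorbed into $u=q^{1/2}$ is compatible with the index fusion, but given the near-identical structure to Theorem \ref{thm-4}, no genuinely new difficulty should arise.
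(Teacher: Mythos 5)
Your opening step (summing over $i$ with Euler's identity, pulling out $(-uq^{1/2};q)_\infty$ and inserting $1/(-uq^{1/2};q)_j$) and your final target (\eqref{LW.rank2.3.41}--\eqref{LW.rank2.3.43}, with the three specializations $(u,v,w)=(q^{1/2},1,1)$, $(q^{1/2},1,q^2)$, $(q^{1/2},q^2,q^4)$) both agree with the paper. But the intermediate mechanism you propose --- fusing $(j,k)$ into a single index via Lemma \ref{lem-13} so as to land on a base-$q$ double sum $\sum q^{2i^2+n^2+2in+\cdots}/((q;q)_i(q;q)_n)$ --- does not work, and you half-notice this yourself when you remark that $m=j+2k$ is ``not quite the right substitution.'' Lemma \ref{lem-13} requires the residual exponent, after extracting a function of $n=j+k$, to be exactly $j^2+k^2-j$ with both denominators of $(q^2;q^2)$ type; here $2j^2+4jk+4k^2=2(j+k)^2+2k^2$, and no choice of $Q(n)$ makes the residual $j^2+k^2-j$ (matching coefficients forces $a+1=2$ and $2a=4$ simultaneously). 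Moreover, if such a fusion did succeed you would land on modulus-$7$ products $(q^a,q^{7-a},q^7;q^7)_\infty$, whereas the right-hand sides of \eqref{table2.5.1}--\eqref{table2.5.3} have modulus $14$ --- a sign that the reduction cannot be to the base-$q$ versions of \eqref{LW.rank2.3.41}--\eqref{LW.rank2.3.43}.

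The step you are missing is much simpler and requires no index fusion at all. Setting $u=q^{1/2}$ turns the inserted factor into $(-q;q)_j$, and $(q;q)_j(-q;q)_j=(q^2;q^2)_j$. The remaining double sum is therefore
\begin{align*}
(-q;q)_\infty\sum_{j,k\ge 0}\frac{q^{2j^2+4jk+4k^2}v^jw^k}{(q^2;q^2)_j(q^2;q^2)_k},
\end{align*}
which is exactly \eqref{LW.rank2.3.41}--\eqref{LW.rank2.3.43} with $q$ replaced by $q^2$ (and $k$ playing the role of $i$ there); the prefactor $(-q;q)_\infty$ then converts the resulting $(q^2;q^2)_\infty$ in the denominator into $(q;q)_\infty$, producing the modulus-$14$ products. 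With that correction your bookkeeping of the linear terms ($2k$ and $2j+4k$ coming from $+2i$ and $+4i+2j$ after $q\mapsto q^2$) goes through as you describe.
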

\begin{proof}
Summing over $i$ using \eqref{euler} first, we have
\begin{align}
\sum_{i,j,k\ge 0}\frac{q^{\frac{1}{2}i^2+2j^2+4k^2+ij+4jk}u^iv^jw^k}{(q;q)_i(q;q)_j(q^2;q^2)_k}
=(-uq^\frac{1}{2};q)_\infty\sum_{j,k\ge 0}\frac{q^{2j^2+4jk+4k^2}v^jw^k}{(q;q)_j(q^2;q^2)_k(-uq^\frac{1}{2};q)_j}. \label{F2.5}
\end{align}
Setting $(u,v,w)=(q^{\frac{1}{2}},1,1)$, $(q^\frac{1}{2},1,q^2)$ and $(q^{\frac{1}{2}},q^2,q^4)$ in \eqref{F2.5}, by \eqref{LW.rank2.3.41}--\eqref{LW.rank2.3.43} we obtain \eqref{table2.5.1}--\eqref{table2.5.3}, respectively.
\end{proof}

\subsubsection{Example 8}
This example corresponds to
\begin{align*}
A=\begin{pmatrix}
1 & 1 & 0\\
1 & 8 & 2\\
0 & 4 & 2
\end{pmatrix},\quad
AD=\begin{pmatrix}
    1 & 1 & 0\\
    1 & 8 & 4\\
    0 & 4 & 4
\end{pmatrix}
,\quad
b\in  \bigg\{
\begin{pmatrix}
1/2 \\ 0 \\ 0
\end{pmatrix},
\begin{pmatrix}
1/2 \\ 2 \\ 0
\end{pmatrix},
\begin{pmatrix}
1/2 \\ 4 \\ 2
\end{pmatrix}
\bigg\}.
\end{align*}
\begin{theorem}\label{thm-8}
We have
\begin{align}
\sum_{i,j,k\ge 0}\frac{q^{\frac{1}{2}i^2+4j^2+2k^2+ij+4jk+\frac{1}{2}i}}{(q;q)_i(q;q)_j(q^2;q^2)_k}
&=\frac{(q^6,q^8,q^{14};q^{14})_\infty}{(q;q)_\infty}, \label{table2.8.1}
\\
\sum_{i,j,k\ge 0}\frac{q^{\frac{1}{2}i^2+4j^2+2k^2+ij+4jk+\frac{1}{2}i+2j}}{(q;q)_i(q;q)_j(q^2;q^2)_k}
&=\frac{(q^4,q^{10},q^{14};q^{14})_\infty}{(q;q)_\infty}, \label{table2.8.2}
\\
\sum_{i,j,k\ge 0}\frac{q^{\frac{1}{2}i^2+4j^2+2k^2+ij+4jk+\frac{1}{2}i+4j+2k}}{(q;q)_i(q;q)_j(q^2;q^2)_k}
&=\frac{(q^2,q^{12},q^{14};q^{14})_\infty}{(q;q)_\infty}.  \label{table2.8.3}
\end{align}
\end{theorem}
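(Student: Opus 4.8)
The plan is to reuse the reduction template already applied to Example~5 in this same group, culminating in \eqref{F2.5}: attach formal variables $u,v,w$ to the three indices, carry out the summation over $i$ in closed form by Euler's identity, and then specialize so that the surviving double sum is one of the rank two evaluations \eqref{LW.rank2.3.41}--\eqref{LW.rank2.3.43}.

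Concretely, I would start from
\[
\sum_{i,j,k\ge 0}\frac{q^{\frac{1}{2}i^2+4j^2+2k^2+ij+4jk}u^iv^jw^k}{(q;q)_i(q;q)_j(q^2;q^2)_k},
\]
whose quadratic exponent is $\tfrac12 n^{\mathrm T}ADn$ for the $AD$ of Example~8; the three target identities \eqref{table2.8.1}--\eqref{table2.8.3} then correspond to the specializations of $(u,v,w)$ that reproduce the linear parts of the $b$-vectors $(1/2,0,0)$, $(1/2,2,0)$, $(1/2,4,2)$. The index $i$ appears only through $\tfrac12 i^2+ij$ together with the denominator $(q;q)_i$, and rewriting $\tfrac12 i^2+ij=\binom{i}{2}+i(\tfrac12+j)$ lets me apply the second Euler identity in \eqref{euler}, so that the inner sum over $i$ becomes $(-uq^{1/2+j};q)_\infty$. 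Splitting $(-uq^{1/2+j};q)_\infty=(-uq^{1/2};q)_\infty/(-uq^{1/2};q)_j$ produces
\[
(-uq^{1/2};q)_\infty\sum_{j,k\ge 0}\frac{q^{4j^2+4jk+2k^2}v^jw^k}{(q;q)_j(q^2;q^2)_k(-uq^{1/2};q)_j},
\]
which is exactly the shape of \eqref{F2.5}.

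The key move is to take $u=q^{1/2}$. Then the prefactor becomes $(-q;q)_\infty$ and the denominator factor $(q;q)_j(-q;q)_j$ collapses to $(q^2;q^2)_j$, leaving $\sum_{j,k\ge 0} q^{4j^2+4jk+2k^2}v^jw^k/\bigl((q^2;q^2)_j(q^2;q^2)_k\bigr)$. This is a rank two Nahm sum with $q$ replaced by $q^2$: under $q\mapsto q^2$ the exponent $4j^2+4jk+2k^2$ is precisely $2i^2+j^2+2ij$, identifying the present $j$ with the ``$2i^2$'' variable and $k$ with the ``$j^2$'' variable. Taking $(v,w)=(1,1),(q^2,1),(q^4,q^2)$ inserts the linear shifts $0$, $2j$, and $4j+2k$, and applying \eqref{LW.rank2.3.41}--\eqref{LW.rank2.3.43} at base $q^2$ evaluates the three double sums to $(q^6,q^8,q^{14};q^{14})_\infty/(q^2;q^2)_\infty$, $(q^4,q^{10},q^{14};q^{14})_\infty/(q^2;q^2)_\infty$, and $(q^2,q^{12},q^{14};q^{14})_\infty/(q^2;q^2)_\infty$ respectively. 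Multiplying by $(-q;q)_\infty=(q^2;q^2)_\infty/(q;q)_\infty$ cancels the base-$q^2$ denominator and yields the right-hand sides of \eqref{table2.8.1}--\eqref{table2.8.3}.

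I do not expect a genuine obstacle here: once the Euler summation over $i$ is in place the argument is mechanical. The only care needed is in the bookkeeping---confirming the reorganization $\tfrac12 i^2+ij=\binom{i}{2}+i(\tfrac12+j)$ so that the second Euler identity applies, tracking $(-uq^{1/2};q)_j$ correctly through the split and its collapse with $(q;q)_j$, and invoking the rank two identities at base $q^2$ rather than $q$ (this base change is exactly what upgrades the modulus from $7$ to $14$). Checking that the three choices of $(v,w)$ really match the linear parts of the three $b$-vectors is a one-line computation in each case.
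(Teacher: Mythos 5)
Your proposal is correct and follows essentially the same route as the paper: sum over $i$ with Euler's identity to reach the form \eqref{F2.8}, then specialize $(u,v,w)=(q^{1/2},1,1)$, $(q^{1/2},q^2,1)$, $(q^{1/2},q^4,q^2)$ and invoke \eqref{LW.rank2.3.41}--\eqref{LW.rank2.3.43} at base $q^2$. The only difference is that you spell out the mechanism the paper leaves implicit, namely the collapse $(q;q)_j(-q;q)_j=(q^2;q^2)_j$ and the final cancellation via $(-q;q)_\infty=(q^2;q^2)_\infty/(q;q)_\infty$.
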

\begin{proof}
Summing over $i$ using \eqref{euler} first, we have
\begin{align}
\sum_{i,j,k\ge 0}\frac{q^{\frac{1}{2}i^2+4j^2+2k^2+ij+4jk}u^iv^jw^k}{(q;q)_i(q;q)_j(q^2;q^2)_k}
=(-uq^{\frac{1}{2}};q)_\infty\sum_{j,k\ge 0}\frac{q^{4j^2+4jk+2k^2}v^jw^k}{(q;q)_j(q^2;q^2)_k(-uq^\frac{1}{2};q)_j}. \label{F2.8}
\end{align}
Setting $(u,v,w)=(q^\frac{1}{2},1,1)$, $(q^\frac{1}{2},q^2,1)$ and $(q^\frac{1}{2},q^4,q^2)$ in \eqref{F2.8}, and then using  \eqref{LW.rank2.3.41}--\eqref{LW.rank2.3.43}, we obtain \eqref{table2.8.1}--\eqref{table2.8.3}, respectively.
\end{proof}

\subsubsection{Example 14}
This example corresponds to
\begin{align*}
A=\begin{pmatrix}
2 & 2 & 2\\
2 & 5 & 4\\
4 & 8 & 8
\end{pmatrix},\quad
AD=\begin{pmatrix}
2 & 2 & 4\\
2 & 5 & 8\\
4 & 8 & 16
\end{pmatrix},\quad
b\in  \bigg\{
\begin{pmatrix}
0 \\-1/2\\0
\end{pmatrix},
\begin{pmatrix}
0 \\1/2\\2
\end{pmatrix},
\begin{pmatrix}
1 \\3/2\\4
\end{pmatrix}
\bigg\}.
\end{align*}
\begin{theorem}\label{thm-14}
We have
\begin{align}
\sum_{i,j,k\ge 0}\frac{q^{i^2+\frac{5}{2}j^2+8k^2+2ij+4ik+8jk-\frac{1}{2}j}}{(q;q)_i(q;q)_j(q^2;q^2)_k}
&=\frac{(q^3,q^4,q^7;q^7)_\infty}{(q;q)_\infty}, \label{table2.14.1}
\\
\sum_{i,j,k\ge 0}\frac{q^{i^2+\frac{5}{2}j^2+8k^2+2ij+4ik+8jk+\frac{1}{2}j+2k}}{(q;q)_i(q;q)_j(q^2;q^2)_k}
&=\frac{(q^2,q^5,q^7;q^7)_\infty}{(q;q)_\infty}, \label{table2.14.2}
\\
\sum_{i,j,k\ge 0}\frac{q^{i^2+\frac{5}{2}j^2+8k^2+2ij+4ik+8jk+i+\frac{3}{2}j+4k}}{(q;q)_i(q;q)_j(q^2;q^2)_k}
&=
\frac{(q,q^6,q^7;q^7)_\infty}{(q;q)_\infty}. \label{table2.14.3}
\end{align}
\end{theorem}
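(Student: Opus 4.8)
The plan is to mimic the reduction used for Examples~6 and 10, in which the variables $j$ and $k$ are fused into a single index $m=j+2k$ via Lemma~\ref{lem-12}. The crucial observation is that in the quadratic form $i^2+\tfrac{5}{2}j^2+8k^2+2ij+4ik+8jk$ the mixed $i$-terms combine as $2ij+4ik=2i(j+2k)=2im$, while the $j,k$-part satisfies $\tfrac{5}{2}j^2+8jk+8k^2-\tfrac12 j=2m^2+\tfrac12(j^2-j)$ with $m=j+2k$. The spurious $-\tfrac12 j$ needed to produce the Lemma~\ref{lem-12} summand is harmless, since such a linear term already appears (up to the free linear parameters) in all three target identities.

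First I would introduce the two-parameter generating sum
\begin{align}
\sum_{i,j,k\ge 0}\frac{q^{i^2+\frac{5}{2}j^2+8k^2+2ij+4ik+8jk-\frac{1}{2}j}u^iv^{j+2k}}{(q;q)_i(q;q)_j(q^2;q^2)_k},
\end{align}
and, after substituting the two identities above, factor out the $i$-sum and apply Lemma~\ref{lem-12} to the inner sum $\sum_{j+2k=m}q^{\frac12(j^2-j)}/\big((q;q)_j(q^2;q^2)_k\big)=1/(q;q)_m$. This collapses the triple sum to the double sum
\begin{align}
\sum_{i,m\ge 0}\frac{q^{i^2+2im+2m^2}u^iv^m}{(q;q)_i(q;q)_m}.
\end{align}

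The final step is to specialize $(u,v)$. Matching the residual linear terms, I expect $(u,v)=(1,1)$ to give \eqref{table2.14.1}, $(u,v)=(1,q)$ to give \eqref{table2.14.2}, and $(u,v)=(q,q^2)$ to give \eqref{table2.14.3}. In each case the resulting double sum coincides, after interchanging the names of the two summation indices, with one of the rank-two identities \eqref{LW.rank2.3.41}--\eqref{LW.rank2.3.43}, which immediately produces the claimed modular products.

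I do not anticipate a genuine obstacle here; the whole difficulty is in correctly bookkeeping the linear terms so that each specialization of $(u,v)$ lands on the right member of \eqref{LW.rank2.3.41}--\eqref{LW.rank2.3.43}. The only mild subtlety is the harmless index swap needed to rewrite $q^{i^2+2im+2m^2}$ in the normalized shape $q^{2i^2+j^2+2ij}$ of \eqref{LW.rank2.3.41}, together with checking that the extra $-\tfrac12 j$ inserted to invoke Lemma~\ref{lem-12} is exactly compensated by the chosen value of $v$ in each of the three cases.
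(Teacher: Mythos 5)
Your proposal is correct and follows essentially the same route as the paper: the paper likewise fuses $j,k$ into $m=j+2k$ via Lemma \ref{lem-12}, reduces to the double sum $\sum_{i,m}q^{i^2+2im+2m^2}u^iv^m/((q;q)_i(q;q)_m)$, and specializes $(u,v)=(1,1)$, $(1,q)$, $(q,q^2)$ to invoke \eqref{LW.rank2.3.41}--\eqref{LW.rank2.3.43}. Your bookkeeping of the linear terms and the index swap is accurate.
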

\begin{proof}
Summing over $j,k$ with $j+2k=m$ using Lemma \ref{lem-12} first, we have
\begin{align}
\sum_{i,j,k\ge 0}\frac{q^{i^2+\frac{5}{2}j^2+8k^2+2ij+4ik+8jk-\frac{1}{2}j}u^iv^{j+2k}}{(q;q)_i(q;q)_j(q^2;q^2)_k}
=\sum_{i,m\ge 0}\frac{q^{i^2+2im+2m^2}u^iv^m}{(q;q)_i(q;q)_m}.
\label{F2.14}
\end{align}
Setting $(u,v)=(1,1)$, $(1,q)$ and $(q,q^2)$ in \eqref{F2.14}, and then using \eqref{LW.rank2.3.41}--\eqref{LW.rank2.3.43}, we obtain \eqref{table2.14.1}--\eqref{table2.14.3}, respectively.
\end{proof}

\subsubsection{Example 17}
This example corresponds to
\begin{align*}
A=\begin{pmatrix}
3 & 2 & 2\\
2 & 4 & 2\\
4 & 4 & 4
\end{pmatrix},
\quad
AD=\begin{pmatrix}
    3 & 2 & 4\\
    2 & 4 & 4\\
    4 & 4 & 8
\end{pmatrix}
,\quad
b\in  \bigg\{
\begin{pmatrix}
-1/2 \\ 0 \\ 0
\end{pmatrix},
\begin{pmatrix}
-1/2 \\ 1 \\ 0
\end{pmatrix},
\begin{pmatrix}
1/2 \\ 2 \\ 2
\end{pmatrix}
\bigg\}.
\end{align*}
\begin{theorem}\label{thm-17}
We have
\begin{align}
\sum_{i,j,k\ge 0}\frac{q^{\frac{3}{2}i^2+2j^2+4k^2+2ij+4ik+4jk-\frac{1}{2}i}}{(q;q)_i(q;q)_j(q^2;q^2)_k}
&=\frac{(q^3,q^4,q^7;q^7)_\infty}{(q;q)_\infty},
\label{table2.17.1}
\\
\sum_{i,j,k\ge 0}\frac{q^{\frac{3}{2}i^2+2j^2+4k^2+2ij+4ik+4jk-\frac{1}{2}i+j}}{(q;q)_i(q;q)_j(q^2;q^2)_k}
&=\frac{(q^2,q^5,q^7;q^7)_\infty}{(q;q)_\infty},
\label{table2.17.2}
\\
\sum_{i,j,k\ge 0}\frac{q^{\frac{3}{2}i^2+2j^2+4k^2+2ij+4ik+4jk+\frac{1}{2}i+2j+2k}}{(q;q)_i(q;q)_j(q^2;q^2)_k}
&=\frac{(q,q^6,q^7;q^7)_\infty}{(q;q)_\infty}.
\label{table2.17.3}
\end{align}
\end{theorem}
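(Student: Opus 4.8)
The plan is to mirror the reduction used for Example 14: collapse the triple sum to one of the rank two double sums \eqref{LW.rank2.3.41}--\eqref{LW.rank2.3.43} by summing out two of the three variables with Lemma \ref{lem-12}. The variable carrying the $(q^2;q^2)$ factor is $k$, so I would combine it with one of the two $(q;q)$-variables through the substitution $i+2k=m$, keeping $j$ free. The decisive choice here is to pair $k$ with $i$ rather than with $j$; this is forced by the requirement that the residual exponent collapse to exactly $\binom{i}{2}$.

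The key algebraic step is to check that the quadratic form collapses correctly under this substitution. Writing $k=(m-i)/2$ one finds
\[
\tfrac{3}{2}i^2+4ik+4k^2=\tfrac12 i^2+m^2,\qquad 2ij+4jk=2j(i+2k)=2jm,
\]
so that the base exponent $\tfrac{3}{2}i^2+2j^2+4k^2+2ij+4ik+4jk-\tfrac12 i$ becomes $m^2+2jm+2j^2+\tfrac12(i^2-i)=m^2+2jm+2j^2+\binom{i}{2}$. The crucial point is that the awkward coefficient $\tfrac32$ on $i^2$ together with the linear term $-\tfrac12 i$ conspire to leave precisely the factor $q^{\binom{i}{2}}$ needed to invoke Lemma \ref{lem-12}. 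Introducing tracking variables $u,v$, I would establish
\[
\sum_{i,j,k\ge 0}\frac{q^{\frac{3}{2}i^2+2j^2+4k^2+2ij+4ik+4jk-\frac{1}{2}i}\,u^{i+2k}v^j}{(q;q)_i(q;q)_j(q^2;q^2)_k}=\sum_{j,m\ge 0}\frac{q^{2j^2+2jm+m^2}u^m v^j}{(q;q)_j(q;q)_m},
\]
where the inner sum over $i+2k=m$ has been evaluated by the second identity of Lemma \ref{lem-12}.

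It then remains to read off the three identities by specializing $(u,v)$. A short check shows that the extra linear terms in \eqref{table2.17.1}, \eqref{table2.17.2} and \eqref{table2.17.3} (beyond the base exponent above) are $0$, $j$, and $i+2j+2k=m+2j$ respectively; since $u$ tracks $i+2k=m$ and $v$ tracks $j$, these correspond to $(u,v)=(1,1)$, $(1,q)$ and $(q,q^2)$. After relabelling the coefficient-$2$ variable $j$ and the coefficient-$1$ variable $m$ to match the left sides of \eqref{LW.rank2.3.41}--\eqref{LW.rank2.3.43}, these three substitutions yield exactly the products $(q^3,q^4,q^7;q^7)_\infty/(q;q)_\infty$, $(q^2,q^5,q^7;q^7)_\infty/(q;q)_\infty$ and $(q,q^6,q^7;q^7)_\infty/(q;q)_\infty$, which are \eqref{table2.17.1}--\eqref{table2.17.3}.

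I do not anticipate a genuine obstacle: once the substitution $i+2k=m$ is chosen, everything is forced. The only thing requiring care is the verification that all three linear specializations lie in the span of $\{i+2k,\,j\}$—equivalently, that in each case the coefficients of $i$ and $k$ in the linear part occur in the ratio $1:2$—which is precisely what allows a single generating identity to cover all three cases simultaneously. Thus the main work is the one-line algebraic collapse of the quadratic form, and the rest is bookkeeping together with an appeal to \eqref{LW.rank2.3.41}--\eqref{LW.rank2.3.43}.
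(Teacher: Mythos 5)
Your proposal is correct and follows essentially the same route as the paper: the paper's proof also sums over $i,k$ with $i+2k=m$ via Lemma \ref{lem-12} to reach $\sum_{j,m\ge 0}q^{2j^2+2jm+m^2}u^mv^j/((q;q)_j(q;q)_m)$ and then specializes $(u,v)=(1,1),(1,q),(q,q^2)$ against \eqref{LW.rank2.3.41}--\eqref{LW.rank2.3.43}. Your algebraic verification of the collapse of the quadratic form and of the linear specializations is accurate.
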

\begin{proof}
Summing over $i,k$ with $i+2k=m$ using Lemma \ref{lem-12} first, we have
\begin{align}
\sum_{i,j,k\ge 0}\frac{q^{\frac{3}{2}i^2+2j^2+4k^2+2ij+4ik+4jk-\frac{1}{2}i}u^{i+2k}v^j}{(q;q)_i(q;q)_j(q^2;q^2)_k}
=\sum_{j,m\ge 0}\frac{q^{2j^2+2jm+m^2}u^mv^j}{(q;q)_j(q;q)_m}.
\label{F2.17}
\end{align}
Setting $(u,v) = (1,1)$, $(1,q)$ and $(q,q^2)$ in \eqref{F2.17}, and then using \eqref{LW.rank2.3.41}--\eqref{LW.rank2.3.43}, we obtain \eqref{table2.17.1}--\eqref{table2.17.3}, respectively.
\end{proof}

\subsection{Example 11}
This example corresponds to
\begin{align*}
&A=\begin{pmatrix}
2 & 2 & 1\\
2 & 4 & 2\\
2 & 4 & 3
\end{pmatrix},\quad
AD=\begin{pmatrix}
    2 & 2 & 2\\
    2 & 4 & 4\\
    2 & 4 & 6
\end{pmatrix}
,\nonumber
\\
&b\in  \bigg\{
\begin{pmatrix}
-1\\ -1\\-1
\end{pmatrix},
\begin{pmatrix}
0 \\ 0 \\ 0
\end{pmatrix},
\begin{pmatrix}
0 \\ 0 \\ 1
\end{pmatrix},
\begin{pmatrix}
0 \\ 1 \\ 2
\end{pmatrix},
\begin{pmatrix}
1 \\ 2 \\ 3
\end{pmatrix}
\bigg\}.
\end{align*}
Its modularity follows from the following known identities:
\begin{align}
    \sum_{i,j,k\ge 0}\frac{q^{i^2+2j^2+3k^2+2ij+2ik+4jk-i-j-k}}{(q;q)_i(q;q)_j(q^2;q^2)_k}
    &=2\frac{(q^3,q^5,q^8;q^8)_\infty}{(q;q)_\infty}, \label{proof-11-1}
    \\
    \sum_{i,j,k\ge 0}\frac{q^{i^2+2j^2+3k^2+2ij+2ik+4jk}}{(q;q)_i(q;q)_j(q^2;q^2)_k}
    &=\frac{(q^4,q^4,q^8;q^8)_\infty}{(q;q)_\infty},
    \\
    \sum_{i,j,k\ge 0}\frac{q^{i^2+2j^2+3k^2+2ij+2ik+4jk+k}}{(q;q)_i(q;q)_j(q^2;q^2)_k}
    &=\frac{(q^3,q^5,q^8;q^8)_\infty}{(q;q)_\infty},
    \\
    \sum_{i,j,k\ge 0}\frac{q^{i^2+2j^2+3k^2+2ij+2ik+4jk+j+2k}}{(q;q)_i(q;q)_j(q^2;q^2)_k}
    &=
    \frac{(q^2,q^6,q^8;q^8)_\infty}{(q;q)_\infty},
    \\
     \sum_{i,j,k\ge 0}\frac{q^{i^2+2j^2+3k^2+2ij+2ik+4jk+i+2j+3k}}{(q;q)_i(q;q)_j(q^2;q^2)_k}
    &=\frac{(q,q^7,q^8;q^8)_\infty}{(q;q)_\infty}.
\end{align}
The first identity follows as a special case of the following identity:
\begin{align}
    \sum_{n_1,\cdots,n_{k-1}\geq 0} \frac{q^{N_1^2+\cdots+N_{k-1}^2-N_1}}{(q;q)_{n_1}\cdots (q;q)_{n_{k-2}} (q^2;q^2)_{n_{k-1}}}  =2\frac{(q^{k-1},q^{k+1},q^{2k};q^{2k})_\infty}{(q;q)_\infty},
\end{align}
which corresponds to the case $i=1$ of the following identity of Bressoud \cite{Bressoud1980} (see also \cite[Eq.\ (2.8)]{DJK}):
\begin{align}
   & \sum_{n_1,\dots,n_{k-1}\geq 0} \frac{q^{N_1^2+\cdots+N_{k-1}^2-N_1-\cdots-N_i}}{(q;q)_{n_1}\cdots (q;q)_{n_{k-2}}(q^2;q^2)_{n_{k-1}}} \nonumber \\
   &=\sum_{m=0}^i\frac{(q^{2k},q^{k-i+2m},q^{k+i-2m};q^{2k})_\infty}{(q;q)_\infty}.
\end{align}
The last four identities  are special instances of Bressoud's identity \eqref{eq-Bressoud}.

\subsection{Examples 12 and 13}
\subsubsection{Example 12}
This example corresponds to
\begin{align*}
A=\begin{pmatrix}
1 & 1 & 1/2\\
1 & 2 & 1\\
1 & 2 & 3/2
\end{pmatrix}, \quad
AD=\begin{pmatrix}
    1 & 1 & 1\\
    1 & 2 & 2\\
    1 & 2 & 3
\end{pmatrix}
,\quad
b\in  \bigg\{
\begin{pmatrix}
0 \\ 0 \\ 0
\end{pmatrix},
\begin{pmatrix}
0 \\ 0 \\ 1
\end{pmatrix},
\begin{pmatrix}
0 \\ 1 \\ 1
\end{pmatrix},
\begin{pmatrix}
1 \\ 1 \\ 2
\end{pmatrix}
\bigg\}.
\end{align*}
\begin{theorem}\label{thm-12}
We have
\begin{align}
    \sum_{i,j,k\ge 0}\frac{q^{i^2+2j^2+3k^2+2ij+2ik+4jk}}{(q^2;q^2)_i(q^2;q^2)_j(q^4;q^4)_k}
    &=\frac{(q^4,q^5,q^9;q^9)_\infty}
    {(q;q^2)_\infty(q^4;q^4)_\infty}, \label{exam12-1}
    \\
    \sum_{i,j,k\ge 0}\frac{q^{i^2+2j^2+3k^2+2ij+2ik+4jk+2k}}{(q^2;q^2)_i(q^2;q^2)_j(q^4;q^4)_k}
    &=\frac{(q^3;q^3)_\infty}
    {(q;q^2)_\infty(q^4;q^4)_\infty}, \label{exam12-2}
    \\
    \sum_{i,j,k\ge 0}\frac{q^{i^2+2j^2+3k^2+2ij+2ik+4jk+2j+2k}}{(q^2;q^2)_i(q^2;q^2)_j(q^4;q^4)_k}
    &=\frac{(q^2,q^7,q^9;q^9)_\infty}
    {(q;q^2)_\infty(q^4;q^4)_\infty}, \label{exam12-3}
    \\
    \sum_{i,j,k\ge 0}\frac{q^{i^2+2j^2+3k^2+2ij+2ik+4jk+2i+2j+4k}}{(q^2;q^2)_i(q^2;q^2)_j(q^4;q^4)_k}
    &=\frac{(q,q^8,q^9;q^9)_\infty}
    {(q;q^2)_\infty(q^4;q^4)_\infty}. \label{exam12-4}
\end{align}
\end{theorem}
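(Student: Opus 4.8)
The plan is to prove all four identities \eqref{exam12-1}--\eqref{exam12-4} by a single mechanism: collapse the $i$-summation by Euler's identity, and then recognize the surviving double sum as the output of one step of Bailey's lemma applied to one of the seed pairs \eqref{B-G(1)}, \eqref{G(3)}, \eqref{B-G(1.1)}, \eqref{G(2)} taken in base $q^2$. The starting observation is that the quadratic form is a sum of squares of the partial sums $N_1=i+j+k$, $N_2=j+k$, $N_3=k$, namely $i^2+2j^2+3k^2+2ij+2ik+4jk=N_1^2+N_2^2+N_3^2$. Since the $i$-dependence of the summand of \eqref{exam12-1} is $q^{i^2+2i(j+k)}/(q^2;q^2)_i$, summing over $i$ by the second relation in \eqref{euler} (with $q\mapsto q^2$) produces $(-q^{1+2(j+k)};q^2)_\infty=(-q;q^2)_\infty/(-q;q^2)_{j+k}$. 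Setting $m=j+k$ and simplifying the exponent then gives
\[
\sum_{i,j,k\ge0}\frac{q^{N_1^2+N_2^2+N_3^2}}{(q^2;q^2)_i(q^2;q^2)_j(q^4;q^4)_k}=(-q;q^2)_\infty\sum_{m\ge0}\frac{q^{2m^2}}{(-q;q^2)_m}\sum_{k=0}^m\frac{q^{k^2}}{(q^2;q^2)_{m-k}(q^4;q^4)_k}.
\]

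The inner sum is exactly the $\beta'_m$ produced by the \eqref{BL3} specialization of Bailey's lemma (in base $q^2$, relative to $a=1$) applied to the seed \eqref{B-G(1)} with $q\mapsto q^2$, for which $\beta_k(1;q^2)=1/\big((q^4;q^4)_k(-q;q^2)_k\big)$: the numerator factor $(-q;q^2)_k$ introduced by \eqref{BL3} cancels the $(-q;q^2)_k$ of the seed, and the outer denominator $(-q;q^2)_m$ is precisely the factor that \eqref{BL3} contributes. I would then feed this pair into the limiting form \eqref{cor1} (base $q^2$, $a=1$), turning the left side into $(q^2;q^2)_\infty^{-1}\sum_m q^{2m^2}\alpha'_m$ with $\alpha'_m=q^{m^2}\alpha_m(1;q^2)$. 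Inserting the explicit $\alpha_m$ of \eqref{B-G(1)} and respecting the convention $\alpha_0=1$, the exponent collapses to $(9m^2-m)/2$ and the Jacobi triple product \eqref{Jacobi} gives $\sum_m q^{2m^2}\alpha'_m=\sum_{n\in\mathbb Z}(-1)^nq^{(9n^2-n)/2}=(q^4,q^5,q^9;q^9)_\infty$. Finally the prefactor identity $(-q;q^2)_\infty/(q^2;q^2)_\infty=1/\big((q;q^2)_\infty(q^4;q^4)_\infty\big)$ yields \eqref{exam12-1}. Identity \eqref{exam12-2} follows verbatim after replacing the seed \eqref{B-G(1)} by \eqref{G(3)}, whose extra factor $q^{2k}$ accounts for the linear term $2k$ and shifts the theta function to the residues $3,6\pmod 9$.

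The remaining identities carry the extra linear terms $2j+2k=2N_2$ and $2i+2j+4k$, and after the $i$-summation these become an outer factor $q^{2m}$; matching it through \eqref{cor1} forces the underlying Bailey pair to be taken relative to $a=q^2$ rather than $a=1$. For \eqref{exam12-4} the extra $2i$ re-enters the Euler step, producing the prefactor $(-q^3;q^2)_\infty$ and the outer denominator $(-q^3;q^2)_m$; the new pair \eqref{B-G(1.1)} (valid by Lemma \ref{lem-new-Bailey}), which is the $a=q^2$ companion of \eqref{B-G(1)}, fits this structure exactly under \eqref{BL3} and, via \eqref{cor1} at $a=q^2$ together with \eqref{Jacobi}, delivers the residues $1,8\pmod 9$ required by \eqref{exam12-4}. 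The case \eqref{exam12-3} is the genuinely delicate one, because its reduction still carries the outer factor $(-q;q^2)_m$ (not $(-q^3;q^2)_m$) while simultaneously demanding $a=q^2$; no single seed reproduces both features under \eqref{BL3}, so I would obtain it by passing from the $a=1$ pair of the first paragraph to an $a=q^2$ pair with the shifting Lemma \ref{lem-BP-mod} (or Lemma \ref{lem-DJK}), which inserts precisely the missing $q^{2m}$, and then apply \eqref{cor1} at $a=q^2$ to reach the residues $2,7\pmod 9$.

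I expect the main obstacle to lie entirely in these two shifted cases. The reductions for \eqref{exam12-1}--\eqref{exam12-2} are essentially mechanical once the sum-of-squares decomposition and the seed identifications are in hand, but for \eqref{exam12-3}--\eqref{exam12-4} one must correctly pair each vector $b$ with its seed and its value of $a$, manufacture the required $a=q^2$ pairs from the given ones, and carefully track the differences between the prefactors $(-q;q^2)_\infty$ and $(-q^3;q^2)_\infty$ together with the attendant factor $1-q$, all while keeping the $\alpha_0=1$ convention consistent so that the $\alpha$-side sums close up into the correct modulus-$9$ theta functions under \eqref{Jacobi}.
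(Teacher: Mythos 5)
Your reductions and seed identifications for \eqref{exam12-1}, \eqref{exam12-2} and \eqref{exam12-4} coincide with the paper's proof: sum over $i$ by Euler's identity, recognize the inner sum as the $\beta'$ obtained by applying \eqref{BL3} (in base $q^2$) to \eqref{B-G(1)}, \eqref{G(3)} and \eqref{B-G(1.1)} respectively, and finish with \eqref{cor1} and \eqref{Jacobi}; the prefactor manipulations you describe are also exactly what is needed. The problem is your plan for \eqref{exam12-3}. You propose to pass from the $a=1$ pair used for \eqref{exam12-1} to an ``$a=q^2$'' pair via Lemma \ref{lem-BP-mod} or Lemma \ref{lem-DJK}, but both of those lemmas shift the parameter \emph{downward}: Lemma \ref{lem-DJK} sends a pair relative to $a$ to one relative to $a/q$, and Lemma \ref{lem-BP-mod} sends a pair relative to $q$ (with $\alpha_n$ of a prescribed shape) to one relative to $1$. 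Neither can manufacture a pair relative to $q^2$ (in base $q^2$) from one relative to $1$, so the step as stated has no support. Moreover, the factor $q^{2m}$ you say the shift must ``insert'' is not a modification of $\beta_m$ at all --- it is simply the $a^m$ in \eqref{cor1} once $a=q^2$ --- so even the motivation for invoking a shifting lemma is off.

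The correct resolution, and the one the paper uses, is that you overlooked \eqref{BL5}. Applying \eqref{BL5} with $a=q$ to the \eqref{G(2)} seed, whose $\beta_n(q;q)=1/\big((q^2;q^2)_n(-q^{3/2};q)_n\big)$, the numerator $(-a^{1/2}q;q)_r=(-q^{3/2};q)_r$ cancels the seed's denominator, the outer factor becomes $(-a^{1/2};q)_n=(-q^{1/2};q)_n$ rather than the $(-q^{3/2};q)_n$ that \eqref{BL3} would give, and $a^{r/2}q^{(r^2-r)/2}=q^{r^2/2}$. This produces precisely the $\beta_n'(q;q)=\frac{1}{(-q^{1/2};q)_n}\sum_{k}\frac{q^{k^2/2}}{(q;q)_{n-k}(q^2;q^2)_k}$ that your reduction demands, \emph{already relative to} $a=q$, so that \eqref{cor1} at $a=q^2$ in base $q^2$ applies directly and yields the residues $2,7\pmod 9$ after \eqref{Jacobi}. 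In short: your diagnosis that \eqref{BL3} alone cannot deliver both the outer factor $(-q;q^2)_m$ and the parameter $a=q^2$ is right, but the remedy is the \eqref{BL5} specialization with the \eqref{G(2)} seed, not an $a$-raising lemma that the paper does not provide.
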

\begin{proof}
Let
\begin{align}\label{exam12-start-F}
&    F(u,w,w)=F(u,v,w;q):=\sum_{i,j,k\geq 0} \frac{q^{i^2+2j^2+3k^2+2ij+2ik+4jk}u^iv^jw^k}{(q^2;q^2)_i(q^2;q^2)_j(q^4;q^4)_k} \nonumber \\
&=\sum_{j,k\geq 0} \frac{q^{2(j+k)^2+k^2}v^jw^k}{(q^2;q^2)_j(q^4;q^4)_k}(-q^{2j+2k+1}u;q^2)_\infty.
\end{align}
In particular, when $v=w$ we have
\begin{align}\label{exam12-start-F-special}
    F(u,v,v)=(-qu;q^2)_\infty \sum_{n=0}^\infty \sum_{j+k=n} \frac{q^{2n^2+k^2}v^n}{(q^2;q^2)_{j}(q^4;q^4)_k(-qu;q^2)_n}.
\end{align}

(1) Applying \eqref{BL3} with $a=1$ to the \eqref{B-G(1)} Bailey pair $(\alpha_n(1;q),\beta_n(1;q))$, we obtain a new Bailey pair $(\alpha_n'(1;q),\beta_n'(1;q))$ with
\begin{align}
    \alpha_n'(1;q)=q^{n^2/2}\alpha_n(1;q), \quad \beta_n'(1;q)=\frac{1}{(-q^{1/2};q)_n}\sum_{k=0}^n\frac{q^{k^2/2}}{(q;q)_{n-k}(q^2;q^2)_k}.
\end{align}
By \eqref{exam12-start-F-special} we have
\begin{align}\label{exam12-1-proof}
    F(1,1,1)=(-q;q^2)_\infty \sum_{n=0}^\infty q^{2n^2}\beta_n'(1;q^2).
\end{align}
Replacing $q$ by $q^2$ in \eqref{cor1} and setting $a=1$, we deduce that
\begin{align}
    &\sum_{n=0}^\infty q^{2n^2}\beta_n'(1;q^2)=\frac{1}{(q^2;q^2)_\infty} \sum_{r=0}^\infty q^{2r^2}\alpha_r'(1;q^2)
    =\frac{1}{(q^2;q^2)_\infty} \sum_{r=0}^\infty q^{3r^2}\alpha_r(1;q^2) \nonumber \\
    &=\frac{1}{(q^2;q^2)_\infty}  \sum_{r=-\infty}^\infty (-1)^rq^{(9r^2-r)/2}  =\frac{(q^4,q^5,q^9;q^9)_\infty}{(q^2;q^2)_\infty}.
\end{align}
Here  for the last equality we used \eqref{Jacobi}.
Substituting it into \eqref{exam12-1-proof} we obtain \eqref{exam12-1}.

(2) Applying \eqref{BL3} with $a=1$ to the \eqref{G(3)} Bailey pair $(\alpha_n(1;q),\beta_n(1;q))$, we obtain a new Bailey pair:
\begin{align}
    \alpha_n'(1;q)=q^{n^2/2}\alpha_n(1;q), \quad \beta_n'(1;q)=\frac{1}{(-q^{1/2};q)_n}\sum_{k=0}^n\frac{q^{k^2/2+k}}{(q;q)_{n-k}(q^2;q^2)_k}.
\end{align}
By \eqref{exam12-start-F} we have
\begin{align}\label{exam12-2-proof}
     F(1,1,q^2)&=(-q;q^2)_\infty \sum_{n=0}^\infty \frac{q^{2n^2}}{(-q;q^2)_n}\sum_{k=0}^n \frac{q^{k^2+2k}}{(q^4;q^4)_k(q^2;q^2)_{n-k}} \nonumber \\
     &=(-q;q^2)_\infty\sum_{n=0}^\infty q^{2n^2}\beta_n'(1;q^2).
\end{align}
Using \eqref{cor1} with $a=1$ and $q$ replaced by $q^2$, we have
\begin{align}
    &\sum_{n=0}^\infty q^{2n^2}\beta_n'(1;q^2) =\frac{1}{(q^2;q^2)_\infty} \sum_{r=0}^\infty q^{2r^2}\alpha_r'(1;q^2) =\frac{1}{(q^2;q^2)_\infty} \sum_{r=0}^\infty q^{3r^2}\alpha_r(1;q^2) \nonumber \\
    &=\frac{1}{(q^2;q^2)_\infty} \sum_{r=-\infty}^\infty (-1)^rq^{(9r^2-3r)/2}=\frac{(q^3;q^3)_\infty}{(q^2;q^2)_\infty}.
\end{align}
Here for the last equality we used \eqref{Jacobi}. Substituting it into \eqref{exam12-2-proof}, we obtain \eqref{exam12-2}.

(3) Applying \eqref{BL5} with $a=q$ to the \eqref{G(2)} Bailey pair $(\alpha_n(q;q),\beta_n(q;q))$, we obtain a new Bailey pair:
\begin{equation}
\begin{split}
   & \alpha_n'(q;q)=\frac{(-q^{3/2};q)_n}{(-q^{1/2};q)_n}q^{n^2/2}\alpha_n(q;q), \\
    & \beta_n'(q;q)=\frac{1}{(-q^{1/2};q)_n}\sum_{k=0}^n \frac{q^{k^2/2}}{(q;q)_{n-k}(q^2;q^2)_k}.
\end{split}
\end{equation}
By \eqref{exam12-start-F-special} we have
\begin{align}\label{exam12-3-proof}
     F(1,q^2,q^2)=(-q;q^2)_\infty \sum_{n=0}^\infty q^{2n^2+2n}\beta_n'(q^2;q^2).
\end{align}
Using \eqref{cor1} with $q$ replaced by $q^2$ we have
\begin{align*}
   &\sum_{n=0}^\infty q^{2n^2+2n}\beta_n'(q^2;q^2) =\frac{1}{(q^4;q^2)_\infty} \sum_{r=0}^\infty q^{2r^2+2r}\frac{(-q^3;q^2)_r}{(-q;q^2)_r}q^{r^2}\alpha_r(q^2;q^2) \nonumber \\
   &=\frac{1}{(q^4;q^2)_\infty} \sum_{r=0}^\infty (-1)^r q^{\frac{9}{2}r^2+\frac{5}{2}r} \cdot \frac{1-q^{4r+2}}{1-q^2} =\frac{1}{(q^2;q^2)_\infty} \sum_{r=-\infty}^\infty (-1)^rq^{\frac{9}{2}r^2+\frac{5}{2}r} \nonumber \\
   &=\frac{(q^2,q^7,q^9;q^9)_\infty}{(q^2;q^2)_\infty}.
\end{align*}
Here for the last equality we used \eqref{Jacobi}. Substituting it into \eqref{exam12-3-proof}, we obtain \eqref{exam12-3}.

(4) Applying \eqref{BL3} with $a=q$ to the \eqref{B-G(1.1)} Bailey pair, we obtain the new Bailey pair:
\begin{equation}
\begin{split}
    &\alpha_n'(q;q)=\frac{1+q^{1/2}}{1+q^{n+1/2}}q^{\frac{1}{2}n^2+n}\alpha_n(q;q), \\
    & \beta_n'(q;q)=\frac{1}{(-q^{3/2};q)_n}\sum_{k=0}^n \frac{(-q^{1/2};q)_k}{(q;q)_{n-k}(q^2;q^2)_k}q^{\frac{1}{2}k^2+k}.
\end{split}
\end{equation}

Using \eqref{exam12-start-F}, \eqref{cor1} and this Bailey pair with $q$ replaced by $q^2$,
we have
\begin{align*}
    &F(q^2,q^2,q^4)=(-q^3;q^2)_\infty \sum_{n=0}^\infty \frac{q^{2n^2+2n}}{(-q^3;q^2)_n} \sum_{k=0}^n\frac{q^{k^2+2k}}{(q^4;q^4)_k(q^2;q^2)_{n-k}} \nonumber \\
    &=(-q^3;q^2)_\infty \sum_{n=0}^\infty q^{2n^2+2n}\beta_n'(q^2;q^2) \nonumber \\
    &=\frac{(-q^3;q^2)_\infty}{(q^4;q^2)_\infty} \sum_{r=0}^\infty q^{3r^2+4r}\cdot \frac{1+q}{1+q^{2r+1}}\alpha_r(q^2;q^2) \nonumber \\
   &=\frac{(-q;q^2)_\infty}{(q^2;q^2)_\infty} \sum_{r=0}^\infty (-1)^rq^{\frac{9}{2}r^2+\frac{7}{2}r}(1-q^{2r+1}) \\
   &=\frac{(-q;q^2)_\infty}{(q^2;q^2)_\infty} \sum_{r=-\infty}^\infty (-1)^rq^{\frac{9}{2}r^2+\frac{7}{2}r} \nonumber \\
    &=\frac{(-q;q^2)_\infty (q,q^8,q^9;q^9)_\infty}{(q^2;q^2)_\infty}.
\end{align*}
Here for the last equality we used \eqref{Jacobi}. This proves \eqref{exam12-4}.
\end{proof}

\begin{rem}
    Note that \eqref{exam12-1} is a special case of Warnaar's identity \cite[Eq.\ (5.14)]{Warnaar}:
\begin{align}
&\sum_{n_1,\dots,n_k\geq 0} \frac{q^{\frac{1}{2}(N_1^2+\cdots+N_k^2)}}{(q;q)_{n_1}\cdots (q;q)_{n_{k-1}}(q^{2};q^{2})_{n_k}} \nonumber \\
&=\frac{(q^{(k+1)/2},q^{(k+2)/2},q^{(2k+3)/2};q^{(2k+3)/2})_\infty}{(-q;q)_\infty(q^{1/2};q^{1/2})_\infty}
\end{align}
where $N_i=n_i+\cdots +n_k$.
Warnaar also discovered a more general identity (unpublished, see \eqref{eq-Warnaar-general}) which covers the identities \eqref{exam12-1}--\eqref{exam12-4}.
\end{rem}

\subsubsection{Example 13}
This example corresponds to
\begin{align*}
A=\begin{pmatrix}
1 & 0 & 1/2\\
0 & 2 & 1\\
1 & 2 & 2
\end{pmatrix},
\quad
AD=\begin{pmatrix}
    1 & 0 & 1\\
    0 & 2 & 2\\
    1 & 2 & 4
\end{pmatrix}
,\quad
b\in  \bigg\{
\begin{pmatrix}
0 \\ 0 \\ 0
\end{pmatrix},
\begin{pmatrix}
0 \\ 0 \\ 1
\end{pmatrix},
\begin{pmatrix}
0 \\ 1 \\ 2
\end{pmatrix},
\begin{pmatrix}
1 \\ 1 \\ 2
\end{pmatrix}
\bigg\}.
\end{align*}
\begin{theorem}\label{thm-13}
We have
\begin{align}
    \sum_{i,j,k\ge 0}\frac{q^{i^2+2j^2+4k^2+2ik+4jk}}{(q^2;q^2)_i(q^2;q^2)_j(q^4;q^4)_k}
    &=
    \frac{(q^5,q^6,q^{11};q^{11})_\infty}{(q;q^2)_\infty(q^4;q^4)_\infty},\label{table2.13.1}
    \\
    \sum_{i,j,k\ge 0}\frac{q^{i^2+2j^2+4k^2+2ik+4jk+2k}}{(q^2;q^2)_i(q^2;q^2)_j(q^4;q^4)_k}
    &=
    \frac{(q^4,q^7,q^{11};q^{11})_\infty}{(q;q^2)_\infty(q^4;q^4)_\infty},\label{table2.13.2}
    \\\sum_{i,j,k\ge 0}\frac{q^{i^2+2j^2+4k^2+2ik+4jk+2j+4k}}{(q^2;q^2)_i(q^2;q^2)_j(q^4;q^4)_k}
    &=
    \frac{(q^2,q^9,q^{11};q^{11})_\infty}{(q;q^2)_\infty(q^4;q^4)_\infty},\label{table2.13.3}
    \\\sum_{i,j,k\ge 0}\frac{q^{i^2+2j^2+4k^2+2ik+4jk+2i+2j+4k}}{(q^2;q^2)_i(q^2;q^2)_j(q^4;q^4)_k}
    &=
    \frac{(q,q^{10},q^{11};q^{11})_\infty}{(q;q^2)_\infty(q^4;q^4)_\infty}.\label{table2.13.4}
\end{align}
\end{theorem}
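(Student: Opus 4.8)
The plan is to mirror the Bailey-pair approach already used for Example 12, since the matrix parts of the two examples are closely related. First I would introduce a generating function
\begin{align*}
G(u,v,w):=\sum_{i,j,k\geq 0}\frac{q^{i^2+2j^2+4k^2+2ik+4jk}u^iv^jw^k}{(q^2;q^2)_i(q^2;q^2)_j(q^4;q^4)_k}
\end{align*}
and try to sum one variable in closed form. The quadratic form $i^2+4k^2+2ik=(i+k)^2+3k^2$ couples $i$ and $k$, while $2j^2+4jk=2j(j+2k)$ couples $j$ and $k$. The cleanest reduction is to sum over $i$ using Euler's identity \eqref{euler}: the $i$-dependence is $q^{i^2+2ik}u^i/(q^2;q^2)_i=q^{i(i-1)}\cdot (q\cdot uq^{2k})^i/(q^2;q^2)_i$ after replacing $q$ by $q^2$ in \eqref{euler}, so $\sum_i$ produces a factor $(-uq^{2k+1};q^2)_\infty$. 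This leaves a double sum in $j,k$ whose product-side target I will match via a Bailey pair.

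**Carrying out the reduction to a single-variable Bailey sum.** After summing over $i$, and then summing over $j$ via the second Euler identity in \eqref{euler} (the $j$-sum $\sum_j q^{2j^2+4jk}v^j/(q^2;q^2)_j$ is, up to substitution, of the form $\sum_j q^{2\binom{j}{2}}(\,\cdot\,)^j/(q^2;q^2)_j=(-\,\cdot\,;q^2)_\infty$), I expect to collapse $G(u,v,w)$ for the relevant specializations into a single sum over $k$ weighted by $q^{4k^2}/(q^4;q^4)_k$ against an infinite product coming from the two Euler sums. The resulting single sum should have exactly the shape $\sum_k A^k q^{4k^2}\beta_k$ for a suitable Bailey pair $(\alpha_k,\beta_k)$ relative to $a=1$ or $a=q^2$, after replacing $q$ by $q^4$. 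I would then invoke the limiting Bailey identity \eqref{cor1} (with $q\mapsto q^4$ and the appropriate value of $a$) to convert $\sum_k q^{4k^2}\beta_k$ into $\frac{1}{(aq^4;q^4)_\infty}\sum_r q^{4r^2}\alpha_r$, where the $\alpha$-side is a theta series summable by the Jacobi triple product \eqref{Jacobi}.

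**Matching to the modulus-11 product.** The product sides all carry the base $(q^{11};q^{11})_\infty$ with numerators $q^5,q^6$; $q^4,q^7$; $q^2,q^9$; $q,q^{10}$, so after the Bailey transform the $\alpha$-side must be a bilateral theta sum of the form $\sum_r(-1)^rq^{\frac{11}{2}r^2+cr}$ for the appropriate constant $c$ tied to each choice of $b$. This is precisely what one obtains from Bailey pairs of the type \eqref{B-G(1)}, \eqref{G(3)}, \eqref{G(2)}, \eqref{B-G(1.1)} after the $\rho$-limits \eqref{BL3}/\eqref{BL5} produce the extra $q^{k^2/2}$ or $q^{n^2}$ factor (here scaled by $q\mapsto q^4$); the quadratic exponent $\tfrac32 r^2$ in the $\alpha_n$'s, combined with the $q^{4r^2}$ from \eqref{cor1} and the rescaling, should assemble into $\tfrac{11}{2}r^2$. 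Concretely, I anticipate using the \eqref{B-G(1)} pair for \eqref{table2.13.1}, \eqref{G(3)} for \eqref{table2.13.2}, \eqref{G(2)} together with \eqref{BL5} for \eqref{table2.13.3}, and the new pair \eqref{B-G(1.1)} together with \eqref{BL3} for \eqref{table2.13.4}, exactly paralleling the four cases of Theorem \ref{thm-12}.

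**Main obstacle.** The delicate point will not be the Bailey machinery itself but the bookkeeping in the double-Euler reduction: I must verify that summing over both $i$ and $j$ leaves the remaining $k$-sum in a form whose $\beta_k$ exactly matches one of the tabulated $\beta_n(a;q^4)$, and that the specialization of $(u,v,w)$ forced by each vector $b$ (namely $(1,1,1)$, $(1,1,q^2)$, $(1,q^2,q^4)$, and $(q^2,q^2,q^4)$, reading off the linear term $n^{\mathrm T}b$) produces precisely the residual infinite products $(-u\cdots;q^2)_\infty$ that combine with $(q;q^2)_\infty^{-1}(q^4;q^4)_\infty^{-1}$ to give the stated left-hand side of the prefactor. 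If the two Euler sums do not both telescope cleanly — for instance if the $j$-sum leaves a residual $(-q^{2k+1};q^2)$-type factor entangled with $k$ — I would instead sum $j$ and $k$ jointly after the substitution $m=j+2k$ (as in Lemma \ref{lem-12} and Lemma \ref{lem-13}, here in base $q^2$), which is the fallback that guarantees a single clean Bailey sum in the end.
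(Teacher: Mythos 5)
Your high-level strategy (one Euler summation to strip off a variable, then Bailey pairs from the G-family fed into \eqref{cor1} and the Jacobi triple product) is the right one, and it is essentially how the paper proceeds: Theorem \ref{thm-13} is obtained there as the case $k=2$, $q\mapsto q^2$ of the multi-sum results (Theorem \ref{thm-gen-13-original} via Corollary \ref{cor-gen-13}, and Theorem \ref{thm-gen-13-odd} via Corollary \ref{cor-gen-13-last}), whose proofs run exactly on iterated \eqref{Bailey lemma-infty} applied to the \eqref{B-G(1)}, \eqref{G(2)}, \eqref{B-G(1.1)} pairs together with Lemma \ref{lem-BP-mod}. However, the middle of your argument has a genuine gap: neither of the two reduction devices you name will carry out the step from the double $(j,k)$-sum to a single Bailey sum. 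After summing over $i$ you are left with
\begin{align*}
(-uq;q^2)_\infty\sum_{j,k\ge 0}\frac{q^{2j^2+4jk+4k^2}v^jw^k}{(q^2;q^2)_j(q^4;q^4)_k(-uq;q^2)_k},
\end{align*}
and the $j$-dependence is $q^{2j^2+\cdots}=(q^2)^{j^2+\cdots}$, not $(q^2)^{\binom{j}{2}+\cdots}$, so the second Euler identity in \eqref{euler} does not apply to the $j$-sum. Your fallback, summing $j$ and $k$ jointly along $m=j+2k$ as in Lemma \ref{lem-12}, also fails: $2j^2+4jk+4k^2-(j^2-j)=(j+2k)^2+j$ is not a function of $j+2k$ alone, and the residual factor $(-uq;q^2)_k$ is attached to $k$, not to $j+2k$, so no clean $1/(q^2;q^2)_m$ can emerge.

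The missing idea is the diagonal substitution $n=j+k$ together with the identity $2j^2+4jk+4k^2=2(j+k)^2+2k^2$ (compare \eqref{exam12-start-F}--\eqref{exam12-start-F-special}, where the analogous decomposition $2(j+k)^2+k^2$ is used). With $v=w$ this turns the double sum into
\begin{align*}
\sum_{n\ge 0}q^{2n^2}v^n\sum_{k=0}^{n}\frac{q^{2k^2}}{(q^2;q^2)_{n-k}(q^4;q^4)_k(-q;q^2)_k},
\end{align*}
whose inner sum is exactly the $\beta_n$ produced by applying \eqref{Bailey lemma-infty} (in base $q^2$) to the Bailey pairs \eqref{B-G(1)}, \eqref{G(3)}, \eqref{G(2)}, \eqref{B-G(1.1)} relative to $a=1$ or $a=q^2$; from there \eqref{cor1} and \eqref{Jacobi} do produce the modulus-$11$ theta quotients, since $q^{2n^2}\cdot q^{\frac{3}{2}\binom{n}{2}}\big|_{q\to q^2}$ assembles into $q^{\frac{11}{2}n^2+cn}$ as you anticipated. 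Once you replace your two failed reduction steps by this $n=j+k$ decomposition, your outline becomes a correct single-case version of the paper's argument.
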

This theorem can be proved in a way similar to the proof of Theorem \ref{thm-12}. We can generalize it to multi-sum identities. See Section \ref{sec-general} for its generalization and complete proofs.

Below we give a companion identity which includes the product $(q^3,q^8,q^{11};q^{11})_\infty$ missed from Theorem \ref{thm-13}.
\begin{theorem}\label{thm-13-sum}
    We have
    \begin{align}\label{eq-13-sum}
    \sum_{i,j,k\ge 0}\frac{q^{i^2+2j^2+4k^2+2ik+4jk+2j+2k}(1+q^{2i+2j+4k+2})}{(q^2;q^2)_i(q^2;q^2)_j(q^4;q^4)_k}
    =
    \frac{(q^3,q^8,q^{11};q^{11})_\infty}{(q;q^2)_\infty(q^4;q^4)_\infty},
    \end{align}
\end{theorem}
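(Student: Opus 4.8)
The plan is to reduce the triple sum to a single sum and then match it against a Jacobi triple product via the Bailey machinery, exactly mirroring the argument in Theorem \ref{thm-12}. First I would record the generating function that underlies all of Example 13. Writing the quadratic form $i^2+2j^2+4k^2+2ik+4jk$ as $(i+k)^2+2(j+k)^2+k^2$, I would sum over $i$ using Euler's identity \eqref{euler} (the $i$-variable carries the factor $q^{i^2+2ik}=q^{i(i+2k)}$ paired with $1/(q^2;q^2)_i$, so after adjusting the half-integer shift this produces a theta factor $(-q^{2k+\cdots};q^2)_\infty$). This leaves a double sum in $j,k$ that, after the substitution $n=j+k$, becomes a single sum in $n$ weighted by an inner convolution over $j+k=n$. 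The upshot is a master formula of the shape
\begin{align*}
\sum_{i,j,k\ge 0}\frac{q^{i^2+2j^2+4k^2+2ik+4jk}u^iv^jw^k}{(q^2;q^2)_i(q^2;q^2)_j(q^4;q^4)_k}
=(-q\,u;q^2)_\infty\sum_{n\ge 0}\frac{q^{2n^2}v^n}{(-q\,u;q^2)_n}\sum_{j+k=n}\frac{q^{k^2}(w/v)^k}{(q^2;q^2)_j(q^4;q^4)_k},
\end{align*}
valid at least when $v=w$ in the clean form \eqref{exam12-start-F-special}, and with the extra power $w/v$ tracking the shift needed for \eqref{eq-13-sum}.

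Next I would identify the two pieces on the left of \eqref{eq-13-sum}. The factor $q^{2j+2k}$ corresponds to specializing $v=q^2$, $w=q^2$ (giving the term $q^{2n^2+2n}$), while the second summand carries the extra monomial $q^{2i+2j+4k+2}=q^2\cdot u^{?}\cdots$; the cleanest route is to treat the bracket $1+q^{2i+2j+4k+2}$ as producing two specializations of the master formula whose sum telescopes the Bailey $\alpha$-side. Concretely, I expect the inner convolution $\sum_{j+k=n}q^{k^2+k}/\big((q^2;q^2)_j(q^4;q^4)_k\big)$ to be exactly the $\beta_n'$ of one of the (S3)/(S5)-transformed Bailey pairs from Section \ref{sec-pre} (applied with $q\mapsto q^2$), so that after invoking \eqref{cor1} the whole left side collapses to $(-q;q^2)_\infty/(q^2;q^2)_\infty$ times a bilateral theta sum. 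The target product $(q^3,q^8,q^{11};q^{11})_\infty$ forces the relevant $\alpha_n$ to be of the form $(-1)^nq^{\frac{9}{2}n^2+\cdots}$; comparing exponents, the correct normalization should come from the \eqref{B-G(1.1)} pair (or its companion under Lemma \ref{lem-BP-mod}), transformed by \eqref{BL3} with $a=q$, since that is precisely the pair that produced the ``$q^3,q^8$'' modulus-$11$ gap in the discussion preceding the theorem.

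The key computation is then the $\alpha$-side: after applying \eqref{cor1} I would obtain a sum $\sum_{r\ge 0}(\pm)q^{\frac{9}{2}r^2+\frac{?}{2}r}(1-q^{cr+d})$, and the role of the bracket $(1+q^{2i+2j+4k+2})$ is exactly to supply the companion term that converts the one-sided $\sum_{r\ge 0}$ into the bilateral $\sum_{r\in\mathbb Z}$, after which \eqref{Jacobi} evaluates it as $(q^3,q^8,q^{11};q^{11})_\infty$ up to the prefactor $1/\big((q;q^2)_\infty(q^4;q^4)_\infty\big)$. I expect the main obstacle to be the bookkeeping at this last step: one must check that the two specializations arising from the bracket combine so that the surviving $\alpha_r$ is antisymmetric under $r\mapsto -r-\varepsilon$ for the correct $\varepsilon$, making the bilateralization exact rather than merely asymptotic. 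Getting the half-integer $q$-powers and the $(-q^{1/2};q)$ versus $(-q^{3/2};q)$ factors to align (so that, after $q\mapsto q^2$, the spurious single-sum factors cancel against $(-q;q^2)_\infty$) is where the delicate verification lies; once that alignment is confirmed, the identity follows immediately from \eqref{Jacobi}.
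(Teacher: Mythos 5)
Your proposal takes a genuinely different route from the paper, but as written it contains a concrete error and leaves the decisive step unverified. The paper does not reduce \eqref{eq-13-sum} to a single Bailey sum at all: it uses the $q$-Chu--Vandermonde identity \eqref{Andrews-id} to \emph{raise} the rank, rewriting the triple sum as a quadruple sum with quadratic form $k^2+(k+l)^2+(k+l+j)^2+(k+l+j+i)^2$, absorbs the bracket $(1+q^{2i+2j+4k+2})$ together with the stray linear term $q^{2j}$ by a telescoping argument (the $G(u)-G(uq^2)$ remark), and then quotes Warnaar's identity \eqref{eq-Warnaar-general} with $(k,i)=(4,3)$ and $q\mapsto q^2$. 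That route sidesteps exactly the point your plan leaves open.

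The first problem with your reduction is the master formula. For the quadratic form $i^2+2j^2+4k^2+2ik+4jk=(i^2+2ik)+2(j+k)^2+2k^2$, Euler's identity \eqref{euler} applied to the $i$-sum produces $(-uq^{1+2k};q^2)_\infty=(-uq;q^2)_\infty/(-uq;q^2)_k$: the Pochhammer in the denominator depends on $k$ alone, not on $n=j+k$, and the inner weight is $q^{2k^2}$, not $q^{k^2}$. So the Example 12 template \eqref{exam12-start-F-special} does not apply here; the correct structure is that $(-q;q^2)_k(q^4;q^4)_k$ sits inside the innermost $\beta_k$ (this is the \eqref{B-G(1)}-type pair after $q\mapsto q^2$), and one then needs a full \eqref{Bailey lemma-infty}-type step before invoking \eqref{cor1} --- which is exactly how Theorem \ref{thm-gen-13-original} is organized. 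The second and more serious problem is that the core of the argument is only anticipated, never executed: you must exhibit the two Bailey pairs (with different values of $a$, since the second bracket term shifts all three exponents) whose $\alpha$-sides, after \eqref{cor1}, sum to the bilateral series $\sum_{r\in\mathbb{Z}}(-1)^rq^{(11r^2-5r)/2}$ (in the variable before $q\mapsto q^2$), so that \eqref{Jacobi} yields $(q^{3/2},q^4,q^{11/2};q^{11/2})_\infty$. The entire difficulty of this ``missing'' product is that, unlike \eqref{table2.13.1}--\eqref{table2.13.4}, it does not come from any single pair in Section \ref{sec-pre}; asserting that the bracket ``should'' bilateralize the right theta function is a statement of the goal, not a proof. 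Until that combination is computed explicitly, the argument has a genuine gap.
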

\begin{proof}
We need the following special case of the $q$-Chu--Vandermonde summation formula \cite[p.\ 20]{Andrews1984}:
\begin{align}
\frac{1}{(q;q)_i(q;q)_j}=\sum_{l=0}^{\min(i,j)} \frac{q^{(i-l)(j-l)}}{(q;q)_l(q;q)_{i-l}(q;q)_{j-l}}. \label{Andrews-id}
\end{align}
Using this identity, we have
\begin{align}
    &\sum_{i,j,k\ge 0}\frac{q^{i^2+2j^2+4k^2+2ik+4jk+2j+2k}(1+q^{2i+2j+4k+2})}{(q^2;q^2)_i(q^2;q^2)_j(q^4;q^4)_k} \nonumber \\
    &=\sum_{i,j,k,l\ge 0}\frac{q^{(i+l)^2+2(j+l)^2+4k^2+2(i+l)k+4(j+l)k+2ij+2(j+l)+2k}(1+q^{2(i+l)+2(j+l)+4k+2})}{(q^2;q^2)_i(q^2;q^2)_j(q^2;q^2)_l(q^4;q^4)_k}\nonumber \\
    &=\sum_{i,j,k,l\ge 0}\frac{q^{k^2+(k+l)^2+(k+l+j)^2+(k+l+j+i)^2+2j+2l+2k}(1+q^{2(i+l)+2(j+l)+4k+2})}{(q^2;q^2)_i(q^2;q^2)_j(q^2;q^2)_l(q^4;q^4)_k}\nonumber \\
    &=\sum_{i,j,k,l\ge 0}\frac{q^{k^2+(k+l)^2+(k+l+j)^2+(k+l+j+i)^2+2k+2l}}{(q^2;q^2)_i(q^2;q^2)_j(q^2;q^2)_l(q^4;q^4)_k}. \label{exam13-proof-last}
\end{align}
Now using Warnaar's identity \eqref{eq-Warnaar-general} with $(k,i)=(4,3)$ and $q$ replaced by $q^2$, we obtain \eqref{eq-13-sum}.
\end{proof}

\begin{rem}
We give more details on the last step in \eqref{exam13-proof-last}. Let $A(j)$ be any sequence such that
\begin{align}
    G(u):=\sum_{j\ge 0}\frac{u^jA(j)}{(q^2;q^2)_j}
\end{align}
converges. Then it is easy to see that
\begin{align}\label{G-diff}
G(u)-G(uq^2)=\sum_{j\ge 0}\frac{u^{j}(1-q^{2j})A(j)}{(q^2;q^2)_j}=\sum_{j\ge 0}\frac{u^{j+1}A(j+1)}{(q^2;q^2)_j}  .
\end{align}
If we set
\begin{align*}
    A(j)=\frac{q^{k^2+(k+l)^2+(k+l+j)^2+(k+l+j+i)^2+2k+2l}}{(q^2;q^2)_i(q^2;q^2)_j(q^2;q^2)_k(q^2;q^2)_l},
\end{align*}
then we obtain the last equality in \eqref{exam13-proof-last} immediately by setting $u=1$ in \eqref{G-diff}.
\end{rem}

\subsection{Example 15}
This example corresponds to
\begin{align*}
A=\begin{pmatrix}
1 & 1 & 0\\
1 & 3 & 1/2\\
0 & 1 & 3/2
\end{pmatrix},
\quad
AD=\begin{pmatrix}
    1 & 1 & 0\\
    1 & 3 & 1\\
    0 & 1 & 3
\end{pmatrix}
, \nonumber \\
b\in  \bigg\{
\begin{pmatrix}
1/2 \\ -1/2 \\ 1/2
\end{pmatrix},
\begin{pmatrix}
1/2 \\ 1/2 \\-1/2
\end{pmatrix},
\begin{pmatrix}
1/2 \\ 1/2 \\ 3/2
\end{pmatrix},
\begin{pmatrix}
1/2 \\ 3/2 \\ 1/2
\end{pmatrix}
\bigg\}.
\end{align*}
\begin{theorem}\label{thm-15}
We have
\begin{align}
\sum_{i,j,k\ge 0}\frac{q^{\frac{1}{2}i^2+\frac{3}{2}j^2+\frac{3}{2}k^2+ij+jk+\frac{1}{2}i-\frac{1}{2}j+\frac{1}{2}k}}{(q;q)_i(q;q)_j(q^2;q^2)_k}
&=\frac{(-q;q)_\infty}{(q,q^4;q^5)_\infty},
\label{table2.15.1}
\\
\sum_{i,j,k\ge 0}\frac{q^{\frac{1}{2}i^2+\frac{3}{2}j^2+\frac{3}{2}k^2+ij+jk+\frac{1}{2}i+\frac{1}{2}j-\frac{1}{2}k}}{(q;q)_i(q;q)_j(q^2;q^2)_k}
&=\frac{(-q;q)_\infty}{(q,q^4;q^5)_\infty},
\label{table2.15.2}
\\
\sum_{i,j,k\ge 0}\frac{q^{\frac{1}{2}i^2+\frac{3}{2}j^2+\frac{3}{2}k^2+ij+jk+\frac{1}{2}i+\frac{1}{2}j+\frac{3}{2}k}}{(q;q)_i(q;q)_j(q^2;q^2)_k}
&=\frac{(-q;q)_\infty}{(q^2,q^3;q^5)_\infty},
\label{table2.15.3}
\\
\sum_{i,j,k\ge 0}\frac{q^{\frac{1}{2}i^2+\frac{3}{2}j^2+\frac{3}{2}k^2+ij+jk+\frac{1}{2}i+\frac{3}{2}j+\frac{1}{2}k}}{(q;q)_i(q;q)_j(q^2;q^2)_k}
&=\frac{(-q;q)_\infty}{(q^2,q^3;q^5)_\infty}.
\label{table2.15.4}
\end{align}
\end{theorem}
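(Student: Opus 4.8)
The plan is to reduce each of the four triple sums to a single Rogers--Ramanujan sum by first summing out the variable $i$ and then collapsing the remaining double sum over $j,k$ by means of Lemma \ref{lem-13}. To this end I would introduce the generating function
\[
F(u,v,w):=\sum_{i,j,k\ge 0}\frac{q^{\frac12 i^2+\frac32 j^2+\frac32 k^2+ij+jk}\,u^iv^jw^k}{(q;q)_i(q;q)_j(q^2;q^2)_k},
\]
so that \eqref{table2.15.1}--\eqref{table2.15.4} correspond to the specializations $(u,v,w)=(q^{1/2},q^{-1/2},q^{1/2})$, $(q^{1/2},q^{1/2},q^{-1/2})$, $(q^{1/2},q^{1/2},q^{3/2})$ and $(q^{1/2},q^{3/2},q^{1/2})$, respectively. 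Writing $\tfrac12 i^2+ij=\binom{i}{2}+i(\tfrac12+j)$ and summing over $i$ by the second identity in \eqref{euler} gives $\sum_{i\ge 0}q^{\binom{i}{2}}(uq^{1/2+j})^i/(q;q)_i=(-uq^{1/2+j};q)_\infty$. Since every target has $u=q^{1/2}$, this factor becomes $(-q^{1+j};q)_\infty=(-q;q)_\infty/(-q;q)_j$, and using $(q;q)_j(-q;q)_j=(q^2;q^2)_j$ I obtain
\[
F(q^{1/2},v,w)=(-q;q)_\infty\sum_{j,k\ge 0}\frac{q^{\frac32 j^2+\frac32 k^2+jk}\,v^jw^k}{(q^2;q^2)_j(q^2;q^2)_k}.
\]

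The key algebraic observation is the decomposition $\tfrac32 j^2+\tfrac32 k^2+jk=(j^2+k^2)+\tfrac12(j+k)^2$, which isolates a factor depending only on $n:=j+k$ from a factor of the exact shape handled by Lemma \ref{lem-13}. Grouping the double sum by $n$ yields $\sum_{n\ge 0}q^{\frac12 n^2}\sum_{j+k=n}q^{j^2+k^2}v^jw^k/\big((q^2;q^2)_j(q^2;q^2)_k\big)$. For the first target $v^jw^k=q^{-j/2+k/2}$, and on the slice $j+k=n$ one has $-\tfrac12 j+\tfrac12 k=-j+\tfrac12 n$, so the inner sum equals $q^{n/2}\sum_{j+k=n}q^{j^2+k^2-j}/\big((q^2;q^2)_j(q^2;q^2)_k\big)=q^{n/2}\cdot q^{(n^2-n)/2}/(q;q)_n$ by Lemma \ref{lem-13}. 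The surviving single sum then collapses to $\sum_{n\ge 0}q^{n^2}/(q;q)_n$, which is \eqref{R.R.1}; multiplying by $(-q;q)_\infty$ gives \eqref{table2.15.1}.

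For the third target the same grouping applies with $v^jw^k=q^{j/2+3k/2}$; on $j+k=n$ I would rewrite $\tfrac12 j+\tfrac32 k=-j+\tfrac32 n$, extract $q^{3n/2}$ and again invoke Lemma \ref{lem-13}, producing $\sum_{n\ge 0}q^{n^2+n}/(q;q)_n$, which is \eqref{R.R.2}; this yields \eqref{table2.15.3}. Finally, since the reduced double sum is symmetric in $j$ and $k$, interchanging these two indices carries the specialization for \eqref{table2.15.1} into that for \eqref{table2.15.2} and the specialization for \eqref{table2.15.3} into that for \eqref{table2.15.4}, so the two remaining identities follow at once. I do not anticipate a serious obstacle: the only nonroutine step is spotting the decomposition $\tfrac32 j^2+\tfrac32 k^2+jk=(j^2+k^2)+\tfrac12(j+k)^2$ together with the half-integer bookkeeping that converts the linear terms on each slice $j+k=n$ into precisely the exponent $q^{j^2+k^2-j}$ demanded by Lemma \ref{lem-13}.
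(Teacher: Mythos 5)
Your proposal is correct and follows essentially the same route as the paper's own proof: sum out $i$ via Euler's identity, use $(q;q)_j(-q;q)_j=(q^2;q^2)_j$, split $\tfrac32 j^2+\tfrac32 k^2+jk=(j^2+k^2)+\tfrac12(j+k)^2$, collapse the slice $j+k=n$ with Lemma \ref{lem-13} to reach the Rogers--Ramanujan sums, and dispatch \eqref{table2.15.2} and \eqref{table2.15.4} by the $j\leftrightarrow k$ symmetry. The only cosmetic difference is that the paper packages cases \eqref{table2.15.1} and \eqref{table2.15.3} into a single parameter $a\in\{0,1\}$ via $(u,v,w)=(q^{1/2},q^{a-1/2},q^{a+1/2})$, whereas you treat them separately.
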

\begin{proof}
Summing over $i$ using \eqref{euler} first, we have
\begin{align}
&F(u,v,w;q)
:=\sum_{i,j,k\ge 0}\frac{q^{\frac{1}{2}i^2+\frac{3}{2}j^2+\frac{3}{2}k^2+ij+jk}u^iv^jw^k}{(q;q)_i(q;q)_j(q^2;q^2)_k} \nonumber \\
&=
(-uq^{\frac{1}{2}};q)_\infty\sum_{j,k\ge 0}\frac{q^{\frac{3}{2}j^2+jk+\frac{3}{2}k^2}v^jw^k}{(q;q)_j(q^2;q^2)_k(-uq^{\frac{1}{2}};q)_j}.\label{F2.15}
\end{align}

Setting $(u,v,w)=(q^{\frac{1}{2}},q^{a-\frac{1}{2}},q^{a+\frac{1}{2}})$ ($a=0,1$), we have by \eqref{F2.15} and \eqref{R.R.1} that
\begin{align}
&F(q^{\frac{1}{2}},q^{a-\frac{1}{2}},q^{a+\frac{1}{2}};q)
=(-q;q)_\infty\sum_{j,k\ge 0}\frac{q^{\frac{3}{2}j^2+jk+\frac{3}{2}k^2-\frac{1}{2}j+\frac{1}{2}k+a(j+k)}}{(q^2;q^2)_j(q^2;q^2)_k}
\notag
\\
&=(-q;q)_\infty\sum_{m\ge 0}q^{\frac{1}{2}m^2+(a+\frac{1}{2})m}\sum_{j+k=m}\frac{q^{j^2-j+k^2}}{(q^2;q^2)_j(q^2;q^2)_k}
\label{sym-sum}
\\
&=(-q;q)_\infty\sum_{m\ge 0}q^{\frac{1}{2}m^2+(a+\frac{1}{2})m}\cdot \frac{q^{\frac{1}{2}(m^2-m)}}{(q;q)_m}\quad \text{(by Lemma \ref{lem-13})} \notag\\
&=(-q;q)\sum_{m\ge 0}\frac{q^{m^2+am}}{(q;q)_m}
=\frac{(-q;q)_\infty}{(q^{a+1},q^{4-a};q^5)_\infty}.\nonumber
\end{align}
This proves \eqref{table2.15.1} and \eqref{table2.15.3}.

Note that the inner sum in \eqref{sym-sum} does not change when $j$ and $k$ are interchanged. It follows that
\begin{align*}
  F(q^{\frac{1}{2}},q^{a-\frac{1}{2}},q^{a+\frac{1}{2}};q)=F(q^{\frac{1}{2}},q^{a+\frac{1}{2}},q^{a-\frac{1}{2}};q)
\end{align*}
From this equality and \eqref{table2.15.1} and \eqref{table2.15.3} we obtain \eqref{table2.15.2} and \eqref{table2.15.4}.
\end{proof}

\section{Multi-sum generalizations}\label{sec-general}
In this section, we shall provide some multi-sum Rogers--Ramanujan type identities analogous to \eqref{AG} that generalize some identities in Section \ref{sec-exam}. Throughout this section, for the summation indices $n_1,n_2,\dots,n_k$ we define
\begin{align}
    N_i:=n_i+n_{i+1}+\cdots+n_k, \quad 1\leq i\leq k, \quad \text{and} \quad N_i=0 \quad \text{for} \quad i>k.
\end{align}

\subsection{Generalization using Bailey pairs}
As is well-known (see e.g. \cite{Warnaar-survey}), we can use the theory of Bailey pairs to establish some general multi-sum identities. Here we shall use Bailey pairs to generalize the identities in Examples 12 and 13.

\subsubsection{Generalization of Theorem \ref{thm-12}}
The following result was discovered by Warnaar (unpublished). For the convenience of the reader, we shall provide a proof here.
\begin{theorem}\label{thm-Warnaar}
For $k\ge 2, 1\le i \le k$, we have
\begin{align}\label{eq-Warnaar-general}
&\sum_{n_1,\dots,n_k\geq 0} \frac{q^{\frac{1}{2}(N_1^2+\dots+N_k^2)+N_i+N_{i+2}+N_{i+4}+\cdots}}{(q;q)_{n_1}\cdots (q;q)_{n_{k-1}}(q^{2};q^{2})_{n_k}} \nonumber \\
&=\frac{(-q^\frac{1}{2};q)_\infty(q^{i/2},q^{(2k-i+3)/2},q^{(2k+3)/2};q^{(2k+3)/2})_\infty}{(q;q)_\infty}.
\end{align}
\end{theorem}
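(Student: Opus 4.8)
The plan is to recognise the left-hand side as the $\beta$-side of a Bailey pair manufactured from the G-family by iterating the \eqref{BL3} limiting case of Bailey's lemma, and then to evaluate the companion $\alpha$-side in closed form via the Jacobi triple product \eqref{Jacobi}. Throughout I would work relative to $a=1$, so that the Bailey relation reads $\beta_n=\sum_{r=0}^n\alpha_r/\big((q;q)_{n-r}(q;q)_{n+r}\big)$.

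First I would treat the extremal index $i=k+1$ (the case carrying no linear term), which is the cleanest and fixes the mechanism. Seed with the \eqref{B-G(1)} pair, whose $\beta_n(1;q)=1/\big((q^2;q^2)_n(-q^{1/2};q)_n\big)$ already supplies the desired innermost denominator $(q^2;q^2)_{n_k}$. Applying the \eqref{BL3} move $k$ times, the factors $(-q^{1/2};q)_r$ telescope against those of the seed, leaving a single overall prefactor $1/(-q^{1/2};q)_n$ and a Bailey pair in which $\alpha_n^{(k)}(1;q)=q^{kn^2/2}\alpha_n(1;q)$, while $\beta_n^{(k)}(1;q)$ is the $k$-fold nested sum with summand $q^{(r_1^2+\cdots+r_k^2)/2}$ over the denominator $(q;q)_{n-r_1}(q;q)_{r_1-r_2}\cdots(q;q)_{r_{k-1}-r_k}(q^2;q^2)_{r_k}$. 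Letting $n\to\infty$ in the Bailey relation turns $\lim_n\beta_n^{(k)}$ into $\frac{1}{(q;q)_\infty^2}\sum_{r\ge 0}\alpha_r^{(k)}$; on the other hand the explicit form of $\beta_n^{(k)}$ shows, after setting $N_j=r_j$, that this same limit equals $\frac{1}{(-q^{1/2};q)_\infty(q;q)_\infty}$ times precisely the target sum at $i=k+1$. Equating the two and folding the one-sided sum $\sum_{r\ge 0}q^{kr^2/2}\alpha_r(1;q)$ into a bilateral one (the factor $1+q^{n/2}$ in $\alpha_n(1;q)$ is exactly what symmetrises it) produces $\sum_{r\in\mathbb{Z}}(-1)^rq^{((2k+3)r^2-r)/4}$, which \eqref{Jacobi} evaluates as $(q^{(k+1)/2},q^{(k+2)/2},q^{(2k+3)/2};q^{(2k+3)/2})_\infty$. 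This is exactly the claimed right-hand side at $i=k+1$.

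To reach a general index $i$ I would descend from $i=k+1$ using the Bailey lattice. The move of Lemma \ref{lem-DJK}, in the limiting form of Lemma \ref{lem-BP-mod}, sends a pair relative to $a$ to one relative to $a/q$, multiplying $\beta$ by a factor that degenerates to a power $q^{N}$ and shifting the theta parameter $z=q^{i/2}$ by $q^{1/2}$; the observed passage from the \eqref{B-G(1.1)} pair to the \eqref{G(3)} pair already illustrates one such step. Interleaving these lattice moves with the \eqref{BL3} chain moves lowers $i$ one unit at a time and deposits the linear contributions; because the chain is run in the half-power normalisation $q^{N_j^2/2}$, each lattice insertion lands two levels below the previous one, which is what produces the every-other pattern $N_i+N_{i+2}+N_{i+4}+\cdots$. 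Carrying the same $n\to\infty$ limit and \eqref{Jacobi} evaluation through then yields $(q^{i/2},q^{(2k-i+3)/2},q^{(2k+3)/2};q^{(2k+3)/2})_\infty$ in general.

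The main obstacle is the bookkeeping for general $i$: one must verify that the chosen sequence of chain and lattice moves deposits the linear term at exactly the indices $i,i+2,i+4,\dots$ (and nowhere else), and that each lattice step shifts the theta argument by precisely $q^{1/2}$ so that the final triple product comes out as $(q^{i/2},q^{(2k-i+3)/2},q^{(2k+3)/2};\,q^{(2k+3)/2})_\infty$. Keeping the three interlocking normalisations consistent is the delicate part---the half-powers $q^{N_j^2/2}$ coming from \eqref{BL3}, the base-$q$ versus base-$q^2$ denominators, and the telescoping $(-q^{1/2};q)$ factors that must collapse to the single prefactor $(-q^{1/2};q)_\infty$. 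An alternative that sidesteps some of this is to fix $i$ and induct on $k$, using one \eqref{BL3} step to pass from $k-1$ to $k$ variables once the extremal case and the lattice-shifted seeds are in hand.
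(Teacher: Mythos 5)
Your boundary case (no linear term, the index you call $i=k+1$) is sound: iterating \eqref{BL3} at $a=1$ on the \eqref{B-G(1)} seed does telescope the $(-q^{1/2};q)$ factors, and letting $n\to\infty$ in the Bailey relation plus \eqref{Jacobi} gives $(q^{(k+1)/2},q^{(k+2)/2},q^{(2k+3)/2};q^{(2k+3)/2})_\infty$, which is essentially the paper's mechanism for the pure chain. The gap is in the descent to general $i$, i.e.\ in producing the linear terms, and it is a genuine one. First, you cannot perform ``lattice insertions'' repeatedly: Lemma \ref{lem-BP-mod} takes a pair relative to $q$ (with $\alpha_n$ of a very specific shape) to one relative to $1$, so after a single application you have left the regime in which it applies; the general move of Lemma \ref{lem-DJK} sends $a\mapsto a/q$, and iterating it past $a=1$ breaks the $(aq;q)_{n+r}$ bookkeeping. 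Second, and more importantly, the machinery you allow yourself (\eqref{BL3} chains plus lattice moves) cannot produce the \emph{every-other} pattern $N_i+N_{i+2}+N_{i+4}+\cdots$: at $a=q$ each \eqref{BL3} step contributes $a^rq^{r^2/2}=q^{r^2/2+r}$, so a block of \eqref{BL3} steps followed by one lattice move followed by \eqref{BL3} steps at $a=1$ deposits linear terms at \emph{consecutive} levels --- that is the pattern of Theorem \ref{thm-gen-13-original}, not of \eqref{eq-Warnaar-general}. The ingredient you are missing is \eqref{BL5}: at $a=q$ it contributes $a^{r/2}q^{(r^2-r)/2}=q^{r^2/2}$ with \emph{no} linear term, and its prefactor $(-q^{3/2};q)_r$ is exactly what cancels the $(-q^{3/2};q)_n$ left by the preceding \eqref{BL3} step (pure \eqref{BL3} iteration does not telescope at $a=q$). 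The paper therefore alternates \eqref{BL5} and \eqref{BL3} on the \eqref{B-G(1.1)} or \eqref{G(2)} seed for the inner $k-i$ levels, applies the lattice move of Lemma \ref{lem-BP-mod} exactly once to pass to $a=1$ (this deposits the last linear term, at level $i$), and finishes with \eqref{BL3} steps at $a=1$; the position of the single lattice move, not the number of lattice moves, controls $i$.

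Two further points you would need to address. The paper first sums over $n_1$ by Euler's identity, which both extracts the prefactor $(-q^{1/2};q)_\infty$ and doubles the outermost square to $N_2^2$ so that \eqref{cor1} (whose kernel is $q^{n^2}$, a full square) applies; your alternative of taking $n\to\infty$ directly in the Bailey relation is legitimate for the pure chain but you must check it meshes with the $a=q$ inner blocks. And the case $i=1$ does not follow from the same template: the paper handles it separately, seeding with \eqref{G(2)} and invoking \eqref{cor1} with $a=q$ (kernel $q^{n^2+n}$), which is where the outermost linear term and the half-integral exponent $q^{1/2}$ in the resulting theta product come from.
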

\begin{proof}
(1) First, we prove the theorem for the case $2\leq i\leq k$.

After summing over $n_1$ first, we have
\begin{align}\label{War-start}
    &\sum_{n_1,\dots,n_k\geq 0} \frac{q^{\frac{1}{2}(N_1^2+\dots+N_k^2)+N_i+N_{i+2}+N_{i+4}+\cdots}}{(q;q)_{n_1}\cdots (q;q)_{n_{k-1}}(q^{2};q^{2})_{n_k}}
    \nonumber \\
    &=\sum_{n_2,\dots,n_k\geq 0}(-q^{\frac{1}{2}+N_2};q)_\infty\frac{q^{\frac{1}{2}(2N_2^2+N_3^2+\cdots+N_k^2)+N_i+N_{i+2}+\cdots}}{(q;q)_{n_2}\cdots(q;q)_{n_{k-1}}(q^2;q^2)_{n_k}}
    \nonumber \\
    &=(-q^{\frac{1}{2}};q)_\infty\sum_{n_2,\dots,n_k\ge 0}\frac{q^{N_2^2+\frac{1}{2}(N_3^2+\cdots+N_{k}^2)+N_i+N_{i+2}+\cdots}}{(-q^\frac{1}{2};q)_{N_2}(q;q)_{n_2}\cdots(q;q)_{n_{k-1}}(q^2;q^2)_{n_k}}.
\end{align}
It suffices to evaluate the multi sum in the right side.

Applying \eqref{BL3} to the  \eqref{B-G(1.1)} Bailey pair, we obtain the Bailey pair:
\begin{equation}\label{add-BP-1}
\begin{split}
    &\alpha_n'(q;q)=(-1)^n\frac{(-q^{\frac{1}{2}};q)_n}{(-q^{\frac{3}{2}};q)_n}\frac{q^{\frac{1}{2}n^2+n}q^{\frac{3}{2}\binom{n+1}{2}}(q^{-n}-q^{n+1})}{1-q}, \\
    &\beta_n'(q;q)=\sum_{n_1=0}^n \frac{q^{\frac{1}{2}n_1^2+n_1}}{(-q^{\frac{3}{2}};q)_n(q;q)_{n-n_1}(q^2;q^2)_{n_1}}.
\end{split}
\end{equation}
Applying \eqref{BL5} to \eqref{add-BP-1} we obtain the Bailey pair:
\begin{equation}
\begin{split}
    &\alpha_n^{(2)}(q;q)=(-1)^n\frac{q^{(\frac{3}{2}+2)\binom{n+1}{2}}(q^{-n}-q^{n+1})}{1-q},  \\
    &\beta_n^{(2)}(q;q)=\sum_{n_2=0}^n\sum_{n_1=0}^{n_2}\frac{q^{\frac{1}{2}(n_2^2+n_1^2)+n_1}}{(-q^{\frac{1}{2}};q)_{n}(q;q)_{n-n_2}(q;q)_{n_2-n_1}(q^2;q^2)_{n_1}}.
\end{split}
\end{equation}
Repeating this process $r$ times, i.e., applying \eqref{BL5}\eqref{BL3} $r$ times to the  \eqref{B-G(1.1)} Bailey pair, we obtain the Bailey pair
\begin{equation}
\begin{split}
    &\alpha_n^{(2r)}(q;q)=(-1)^n\frac{q^{(\frac{3}{2}+2r)\binom{n+1}{2}}(q^{-n}-q^{n+1})}{1-q}, \\
    &\beta_n^{(2r)}(q;q)=\sum_{n_{2r}=0}^n\cdots\sum_{n_1=0}^{n_2}\frac{q^{\frac{1}{2}(n_{2r}^2+\cdots+n_1^2)+n_{2r-1}+\cdots+n_1}}{(-q^\frac{1}{2};q)_{n}(q;q)_{n-n_{2r}}\cdots(q;q)_{n_2-n_1}(q^2;q^2)_{n_1}}.\label{war-bailey-even}
\end{split}
\end{equation}
Applying \eqref{BL5} to the the \eqref{G(2)} Bailey pair, we obtain the Bailey pair:
\begin{equation}
\begin{split}
    \alpha_n^{(1)}(q;q)=(-1)^n\frac{q^{(\frac{3}{2}+1)\binom{n+1}{2}}(q^{-n}-q^{n+1})}{1-q},   \\
    \beta_n^{(1)}(1;q)=\sum_{n_1=0}^n\frac{q^{\frac{1}{2}n_1^2}}{(-q^{\frac{1}{2}};q)_{n}(q;q)_{n-n_1}(q^2;q^2)_{n_1}}.
\end{split}
\end{equation}
Applying \eqref{BL5}\eqref{BL3} $r$ times to this Bailey pair we obtain the Bailey pair:
\begin{equation}
\begin{split}
    &\alpha_n^{(2r+1)}(q;q)=(-1)^n\frac{q^{(\frac{3}{2}+1+2r)\binom{n+1}{2}}(q^{-n}-q^{n+1})}{1-q},   \\
    &\beta_n^{(2r+1)}(q;q)=\sum_{n_{2r+1}=0}^n\cdots\sum_{n_1=0}^{n_2}\frac{q^{\frac{1}{2}(n_{2r+1}^2+\cdots+n_1^2)+n_{2r}+\cdots+n_2}}{(-q^\frac{1}{2};q)_n(q;q)_{n-n_{2r+1}}\cdots(q;q)_{n_2-n_1}(q^2;q^2)_{n_1}}.\label{war-bailey-odd}
\end{split}
\end{equation}
Combining \eqref{war-bailey-even} and \eqref{war-bailey-odd} we obtain the Bailey pair for $0 \le r \le k-2$:
\begin{equation}\label{War-key-Baileypair}
\begin{split}
    &\alpha_n^{(r)}(q;q)=(-1)^n\frac{q^{(\frac{3}{2}+r)\binom{n+1}{2}}(q^{-n}-q^{n+1})}{1-q},  \\
    &\beta_n^{(r)}(q;q)=\sum_{n_r=0}^n\cdots\sum_{n_1=0}^{n_2}\frac{q^{\frac{1}{2}(n_r^2+\cdots+n_1^2)+n_{r-1}+n_{r-3}+\cdots}}{(-q^\frac{1}{2};q)_n(q;q)_{n-n_r}\cdots(q;q)_{n_2-n_1}(q^2;q^2)_{n_1}}.
\end{split}
\end{equation}
Setting $u=q^{\frac{3}{2}+r}$ in Lemma \ref{lem-BP-mod}, we obtain another Bailey pair:
\begin{equation}
\begin{split}
    &\alpha_0'(1;q)=1,\quad \alpha_n'(1;q)=(-1)^nq^{(\frac{3}{2}+r)\binom{n}{2}}(1+q^{(\frac{3}{2}+r)n}),   \\    &\beta_n'(1;q)=\sum_{n_r=0}^n\cdots\sum_{n_1=0}^{n_2}\frac{q^{n+\frac{1}{2}(n_r^2+\cdots+n_1^2)+n_{r-1}+n_{r-3}+\cdots}}{(-q^\frac{1}{2};q)_n(q;q)_{n-n_r}\cdots(q;q)_{n_2-n_1}(q^2;q^2)_{n_1}}.
\end{split}
\end{equation}

Applying \eqref{BL3} $k-r-2$ times and obtain the Bailey pair:
\begin{equation}
\begin{split}
    &\widetilde{\alpha}_0^{(k-r-2)}(1;q)=1, \quad \widetilde{\alpha}_n^{(k-r-2)}(1;q)=(-1)^nq^{\frac{(k-r-2)n^2}{2}}q^{(\frac{3}{2}+r)\binom{n}{2}}(1+q^{(\frac{3}{2}+r)n}),   \\
    &\widetilde{\beta}_n^{(k-r-2)}(1;q)=\sum_{n_{k-2}=0}^n\cdots\sum_{n_1=0}^{n_2}\frac{q^{\frac{1}{2}(n_{k-2}^2+\cdots+n_1^2)+n_{r+1}+n_{r-1}+\cdots}}{(-q^\frac{1}{2};q)_n(q;q)_{n-n_{k-2}}\cdots(q;q)_{n_2-n_1}(q^2;q^2)_{n_1}}.
\end{split}
\end{equation}
Using \eqref{cor1} with the above Bailey pair, we have
\begin{align}\label{War-key}
    &\sum_{n_{k-1}\ge 0}q^{n_{k-1}^2}\sum_{n_{k-2}=0}^{n_{k-1}}\cdots\sum_{n_1=0}^{n_2}\frac{q^{\frac{1}{2}(n_{k-2}^2+\cdots+n_1^2)+n_{r+1}+n_{r-1}+\cdots}}{(-q^\frac{1}{2};q)_{n_{k-1}}(q;q)_{n_{k-1}-n_{k-2}}\cdots(q;q)_{n_2-n_1}(q^2;q^2)_{n_1}}
    \nonumber \\
    &=\frac{1}{(q;q)_\infty}\Big(1+\sum_{n\ge 1}(-1)^nq^{n^2}q^{\frac{(k-r-2)n^2}{2}}q^{(\frac{3}{2}+r)\binom{n}{2}}(1+q^{(\frac{3}{2}+r)n})\Big)
    \nonumber \\
    &=\frac{1}{(q;q)_\infty}\sum_{n=-\infty}^\infty(-1)^nq^{\frac{(k-r)}{2}n^2}q^{(\frac{3}{2}+r)\binom{n}{2}}=\frac{1}{(q;q)_\infty}\sum_{n=-\infty}^\infty(-1)^nq^{\frac{(k-r)}{2}n}q^{(\frac{3}{2}+k)\binom{n}{2}} \nonumber \\
    &=\frac{(q^{\frac{k-r}{2}},q^{\frac{k+r+3}{2}},q^{\frac{3}{2}+k};q^{\frac{3}{2}+k})}{(q;q)_\infty}.
\end{align}
Setting $r=k-i$ and substituting \eqref{War-key} into \eqref{War-start}, we prove \eqref{eq-Warnaar-general} in the case $2\leq i\leq k$.

(2) Now we prove the theorem for the case $i=1$.

After summing over $n_1$ first, we have
\begin{align}\label{war-1-start}
    &\sum_{n_1,\dots,n_k\geq 0} \frac{q^{\frac{1}{2}(N_1^2+\dots+N_k^2)+N_1+N_{3}+N_{5}+\cdots}}{(q;q)_{n_1}\cdots (q;q)_{n_{k-1}}(q^{2};q^{2})_{n_k}} \nonumber \\
    &=\sum_{n_2, \dots ,n_k\ge 0}(-q^{\frac{3}{2}+N_2};q)_\infty\frac{q^{N_2^2+N_2+\frac{1}{2}(N_3^2+\cdots+N_k^2)+N_3+N_5+\cdots}}{(q;q)_{n_2}\cdots(q;q)_{n_{k-1}}(q^2;q^2)_{n_k}} \nonumber \\
    &=(-q^{\frac{3}{2}};q)_\infty\sum_{n_2,\dots,n_k\ge 0}\frac{q^{N_2^2+N_2+\frac{1}{2}(N_3^2+\cdots+N_k^2)+N_3+N_5+\cdots}}{(-q^{\frac{3}{2}};q)_{N_2}(q;q)_{n_2}\cdots(q;q)_{n_{k-1}}(q^2;q^2)_{n_k}}.
\end{align}

When $k=2$, the theorem follows from \eqref{war-1-start} and substitution of the \eqref{G(2)} Bailey pair into \eqref{cor1}.

From now on we assume that $k\geq 3$. Applying \eqref{BL3} to the Bailey pair \eqref{War-key-Baileypair}, we obtain the Bailey pair:
\begin{equation}
\begin{split}
    &\alpha_n(q;q)=(-1)^n\frac{q^{(\frac{3}{2}+r+1)\binom{n+1}{2}}(q^{-\frac{n}{2}}-q^{\frac{n+1}{2}})}{1-q^{\frac{1}{2}}},\nonumber\\
&\beta_n(q;q)=\sum_{n_{r+1}=0}^{n}\cdots\sum_{n_1=0}^{n_2}
    \frac{q^{\frac{1}{2}(n_{r+1}^2+\cdots+n_1^2)+n_{r+1}+n_{r-1}+\cdots}}{(-q^{\frac{3}{2}};q)_{n}(q;q)_{n-n_{r+1}}\cdots(q;q)_{n_2-n_1}(q^2;q^2)_{n_1}}.
\end{split}
\end{equation}
Using \eqref{cor1} with this Bailey pair, we have
\begin{align}\label{war-1-key}
    &\sum_{n_{r+2}\ge 0}q^{n_{r+2}^2+n_{r+2}}\sum_{n_{r+1}=0}^{n_{r+2}}\cdots\sum_{n_1=0}^{n_2}
    \frac{q^{\frac{1}{2}(n_{r+1}^2+\cdots+n_1^2)+n_{r+1}+n_{r-1}+\cdots}}{(-q^{\frac{3}{2}};q)_{n_{r+2}}(q;q)_{n_{r+2}-n_{r+1}}\cdots(q;q)_{n_2-n_1}(q^2;q^2)_{n_1}} \nonumber \\
    &=\frac{1}{(q^2;q)_\infty}\sum_{n\ge 0}(-1)^nq^{n^2+n}\frac{q^{(\frac{3}{2}+r+1)\binom{n+1}{2}}(q^{-\frac{n}{2}}-q^{\frac{n+1}{2}})}{1-q^{\frac{1}{2}}} \nonumber  \\
    &= \frac{1+q^{\frac{1}{2}}}{(q;q)_\infty}\sum_{n=-\infty}^\infty(-1)^nq^{(\frac{3}{2}+r+3)\binom{n+1}{2}-\frac{1}{2}n}
    =\frac{1+q^{\frac{1}{2}}}{(q;q)_\infty}\sum_{n=-\infty}^\infty(-1)^nq^{(r+4)n}q^{(\frac{3}{2}+r+3)\binom{n}{2}} \nonumber \\
    &=\frac{(1+q^{\frac{1}{2}})(q^{\frac{1}{2}},q^{r+4},q^{\frac{3}{2}+r+3};q^{\frac{3}{2}+r+3})_\infty}{(q;q)_\infty}.
\end{align}
Setting $r=k-3$ and substituting \eqref{war-1-key} into \eqref{war-1-start}, we prove \eqref{eq-Warnaar-general} in this last case.
\end{proof}

\subsubsection{Generalizations of Theorem \ref{thm-13}}
As mentioned in the introduction, the key identities for generalizing identities in Example 13 are Theorems \ref{thm-gen-13-original} and \ref{thm-gen-13-odd}. Now we present proofs and consequences of them.
\begin{proof}[Proof of Theorem \ref{thm-gen-13-original}]
Applying \eqref{Bailey lemma-infty} to the  \eqref{B-G(1.1)} Bailey pair, we obtain the new Bailey pair:
\begin{equation}
\begin{split}
    &\alpha_{n}^{(1)}(q;q)=(-1)^n\frac{^{(\frac{3}{2}+2)\binom{n+1}{2}}(q^{-n}-q^{n+1})}{1-q},  \\
    &\beta_{n}^{(1)}(q;q)=\sum_{n_1=0}^{n}\frac{q^{n_1^2+n_1}}{(q;q)_{n-n_1}(-q^{1/2};q)_{n_1}(q^2;q^2)_{n_1}}.
\end{split}
\end{equation}
We repeat this process $k-i$ times to obtain the Bailey pair:
\begin{equation}\label{add-13-BP-1}
\begin{split}
    &\alpha_n^{(k-i)}(q;q)=(-1)^n\frac{q^{(\frac{3}{2}+2(k-i))\binom{n+1}{2}}(q^{-n}-q^{n+1})}{1-q},  \\
    &\beta_{n}^{(k-i)}(q;q)=\sum_{n_{k-i}=0}^{n}\cdots \sum_{n_1=0}^{n_2}\frac{q^{n_{k-i}^2+n_{k-i}+...+n_1^2+n_1}}{(q;q)_{n-n_{k-i}}...(q;q)_{n_{2}-n_1}(-q^{1/2};q)_{n_1}(q^2;q^2)_{n_1}}.
\end{split}
\end{equation}

Setting $u=q^{\frac{3}{2}+2(k-i)}$ in Lemma \ref{lem-BP-mod}, we obtain from \eqref{add-13-BP-1} another Bailey pair:
\begin{equation}\label{add-13-BP-2}
\begin{split}
    &\alpha_1(1;q)=1,\quad \alpha_n(1;q)=(-1)^nq^{(\frac{3}{2}+2(k-i))\binom{n}{2}}(1+q^{(\frac{3}{2}+2(k-i))n}),  \\
    &\beta_n(1;q)=\sum_{n_{k-i}=0}^{n}...\sum_{n_1=0}^{n_2}\frac{q^{n+n_{k-i}^2+n_{k-i}+...+n_1^2+n_1}}{(q;q)_{n-n_{k-i}}...(q;q)_{n_{2}-n_1}(-q^{1/2};q)_{n_1}(q^2;q^2)_{n_1}}.
\end{split}
\end{equation}

Applying \eqref{Bailey lemma-infty} $i-1$ times to \eqref{add-13-BP-2}, we  obtain the Bailey pair:
\begin{equation}\label{BP-ik-1}
\begin{split}
    &\alpha_1^{(i-1)}(1;q)=1,\quad \alpha_n^{(i-1)}(1;q)=(-1)^nq^{(i-1)n^2}q^{(\frac{3}{2}+2(k-i))\binom{n}{2}}(1+q^{(\frac{3}{2}+2(k-i))n}), \\
    &\beta_{n}^{(i-1)}(1;q)=\sum_{n_{k-1}=0}^{n}\cdots \sum_{n_1=0}^{n_2}\frac{q^{n_{k-1}^2+\cdots+n_{1}^2+n_{k-i+1}+\cdots+n_{1}}}{(q;q)_{n-n_{k-1}}\cdots(q;q)_{n_2-n_1}(-q^{1/2};q)_{n_1}(q^2;q^2)_{n_1}}.
\end{split}
\end{equation}
Note that the above process is valid only for $1\leq i\leq k$. But in fact, \eqref{BP-ik-1} is also a Bailey pair when $i=k+1$. This case can be obtained by applying \eqref{Bailey lemma-infty} $k-1$ times to the \eqref{B-G(1)} Bailey pair.

Using \eqref{cor1} with the Bailey pair \eqref{BP-ik-1}, we have for $1\leq i\leq k+1$ that
\begin{align*}%\label{general-13-even-proof}
    &\sum_{n_{k}\ge 0}q^{n_{k}^2} \sum_{n_{k-1}=0}^{n_k}\cdots\sum_{n_1=0}^{n_2}\frac{q^{n_{k-1}^2+\cdots+n_{1}^2+n_{k-i+1}+\cdots+n_{1}}}{(q;q)_{n_k-n_{k-1}}..(q;q)_{n_2-n_1}(-q^{1/2};q)_{n_1}(q^2;q^2)_{n_1}}\nonumber \\
    &=
    \frac{1}{(q;q)_\infty}\Big(1+\sum_{n\ge 1}(-1)^nq^{n^2}q^{(i-1)n^2}q^{(\frac{3}{2}+2(k-i))\binom{n}{2}}(1+q^{(\frac{3}{2}+2(k-i))n})\Big) \nonumber \\
    &=\frac{1}{(q;q)_\infty}\sum_{n=-\infty}^\infty (-1)^nq^{in^2}q^{(\frac{3}{2}+2(k-i))\binom{n}{2}} =\frac{1}{(q;q)_\infty}\sum_{n=-\infty}^{\infty}(-1)^nq^{in}q^{(\frac{3}{2}+2k)\binom{n}{2}} \nonumber \\
    &=\frac{(q^i,q^{\frac{3}{2}+2k-i},q^{\frac{3}{2}+2k};q^{\frac{3}{2}+2k})_\infty}{(q;q)_\infty}. \qedhere
\end{align*}
\end{proof}
As a consequence of Theorem \ref{thm-gen-13-original}, we have the following identity which generalizes the first three identities in Theorem \ref{thm-13}.
\begin{corollary}\label{cor-gen-13}
For $k\geq 1$, $1 \le i \le k+1$, we have
\begin{align}
   & \sum_{m,n_1,\dots,n_k\ge 0}\frac{q^{\frac{1}{2}m^2+mn_k+(N_1^2+N_2^2+\cdots+N_k^2)+N_i+N_{i+1}+\cdots+N_k}}{(q;q)_{m}(q;q)_{n_1}\cdots(q;q)_{n_{k-1}}(q^2;q^2)_{n_k}} \nonumber \\
&    =
    \frac{(-q^{\frac{1}{2}};q)_\infty(q^{i},q^{\frac{3}{2}+2k-i},q^{\frac{3}{2}+2k};q^{\frac{3}{2}+2k})_\infty}{(q;q)_\infty}. \label{ex13-general}
\end{align}
\end{corollary}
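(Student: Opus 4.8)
The plan is to derive \eqref{ex13-general} directly from Theorem \ref{thm-gen-13-original} by performing the summation over the extra index $m$ first, thereby collapsing the $(k+1)$-fold sum in \eqref{ex13-general} into the $k$-fold sum treated in that theorem. The key observation is that the summand of \eqref{ex13-general} factors as a product of an $m$-dependent piece and a piece that, apart from a single Pochhammer factor, coincides with the summand of Theorem \ref{thm-gen-13-original}.

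First I would isolate the dependence on $m$. Since $m$ appears only through $q^{\frac{1}{2}m^2+mn_k}/(q;q)_m$, and the exponents $N_1^2+\cdots+N_k^2$ and $N_i+\cdots+N_k$ are independent of $m$, the left-hand side of \eqref{ex13-general} equals
\begin{align*}
\sum_{n_1,\dots,n_k\ge 0}\frac{q^{N_1^2+\cdots+N_k^2+N_i+\cdots+N_k}}{(q;q)_{n_1}\cdots(q;q)_{n_{k-1}}(q^2;q^2)_{n_k}}\sum_{m\ge 0}\frac{q^{\frac12 m^2+mn_k}}{(q;q)_m}.
\end{align*}
Writing $\frac12 m^2=\binom{m}{2}+\frac12 m$ so that $q^{\frac12 m^2+mn_k}=q^{\binom{m}{2}}\big(q^{\frac12+n_k}\big)^m$, the inner sum is an instance of the second Euler identity in \eqref{euler} with $z=q^{1/2+n_k}$, and hence evaluates to $(-q^{1/2+n_k};q)_\infty$.

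The remaining step is a bookkeeping one: I would use the factorization $(-q^{1/2+n_k};q)_\infty=(-q^{1/2};q)_\infty/(-q^{1/2};q)_{n_k}$ to pull the $n_k$-independent constant $(-q^{1/2};q)_\infty$ out of the sum, which leaves precisely
\begin{align*}
(-q^{1/2};q)_\infty\sum_{n_1,\dots,n_k\ge 0}\frac{q^{N_1^2+\cdots+N_k^2+N_i+\cdots+N_k}}{(q;q)_{n_1}\cdots(q;q)_{n_{k-1}}(-q^{1/2};q)_{n_k}(q^2;q^2)_{n_k}}.
\end{align*}
The sum here is exactly the left-hand side of Theorem \ref{thm-gen-13-original}, so substituting its evaluation yields the right-hand side of \eqref{ex13-general} at once. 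There is no serious obstacle in this argument; the only point requiring care is matching the newly generated factor $1/(-q^{1/2};q)_{n_k}$ with the corresponding factor in Theorem \ref{thm-gen-13-original}, which is precisely why one splits off exactly the constant $(-q^{1/2};q)_\infty$ rather than the full product $(-q^{1/2+n_k};q)_\infty$.
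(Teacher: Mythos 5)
Your proposal is correct and follows essentially the same route as the paper: sum over $m$ first via Euler's identity to produce $(-q^{1/2+n_k};q)_\infty$, factor this as $(-q^{1/2};q)_\infty/(-q^{1/2};q)_{n_k}$, and then invoke Theorem \ref{thm-gen-13-original}. All steps check out.
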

\begin{proof}
After summing over $m$ first, we deduce that
\begin{align*}%\label{general-13-even-start}
   & \sum_{m,n_1,\dots,n_k\ge 0}\frac{q^{\frac{1}{2}m^2+mn_k+(N_1^2+N_2^2+\cdots+N_k^2)+N_i+N_{i+1}+\cdots+N_k}}{(q;q)_{m}(q;q)_{n_1}\cdots(q;q)_{n_{k-1}}(q^2;q^2)_{n_k}} \nonumber \\
&    =\sum_{n_1,\dots,n_k\ge 0}(-q^{\frac{1}{2}+n_k};q)_\infty \frac{q^{N_1^2+N_2^2+\cdots+N_k^2+N_i+N_{i+1}+\cdots+N_k}}{(q;q)_{n_1}\cdots(q;q)_{n_{k-1}}(q^2;q^2)_{n_k}} \nonumber \\
&    =(-q^{\frac{1}{2}};q)_\infty \sum_{n_1,\dots,n_k\ge 0} \frac{q^{N_1^2+N_2^2+\cdots+N_k^2+N_i+N_{i+1}+\cdots+N_k}}{(q;q)_{n_1}\cdots(q;q)_{n_{k-1}}(-q^{\frac{1}{2}};q)_{n_k}(q^2;q^2)_{n_k}}.
\end{align*}
The desired identity then follows from Theorem \ref{thm-gen-13-original}.
\end{proof}
Setting $(k,i)=(2,3)$, $(2,2)$ and $(2,1)$ in \eqref{ex13-general} and replacing $q$ by $q^2$, we obtain \eqref{table2.13.1}--\eqref{table2.13.3}, respectively.

\begin{proof}[Proof of Theorem \ref{thm-gen-13-odd}]
Applying \eqref{Bailey lemma-infty} $k-1$ times to the \eqref{G(2)} Bailey pair, we obtain the Bailey pair:
\begin{equation}
\begin{split}
    &\alpha_n^{(k-1)}(q;q)=(-1)^n\frac{q^{(\frac{3}{2}+2(k-1))\binom{n+1}{2}}(q^{-\frac{n}{2}}-q^{\frac{n+1}{2}})}{1-q^\frac{1}{2}},
    \nonumber \\
    &\beta_n^{(k-1)}(q;q)=\sum_{n_{k-1}=0}^n \cdots \sum_{n_1=0}^{n_2}
    \frac{q^{n_{k-1}^2+n_{k-1}+\cdots+n_1^2+n_1}}{(q;q)_{n-n_{k-1}}\cdots (q;q)_{n_2-n_1}(q^2;q^2)_{n_1}(-q^{\frac{3}{2}};q)_{n_1}}.
\end{split}
\end{equation}
Using \eqref{cor1} with the above Bailey pair, we have
\begin{align*}
    &\sum_{n_k\ge 0}q^{n_k^2+n_k}\sum_{n_{k-1}=0}^{n_k}\cdots \sum_{n_1=0}^{n_2}
    \frac{q^{n_{k-1}^2+n_{k-1}+\dots+n_1^2+n_1}}{(q;q)_{n_k-n_{k-1}}...(q;q)_{n_2-n_1}(q^2;q^2)_{n_1}(-q^{\frac{3}{2}};q)_{n_1}}
    \nonumber \\
    &=
    \frac{1}{(q^2;q)_\infty}\sum_{n\ge 0}(-1)^nq^{n^2+n}\frac{q^{(\frac{3}{2}+2(k-1))\binom{n+1}{2}}(q^{-\frac{n}{2}}-q^{\frac{n+1}{2}})}{1-q^{\frac{1}{2}}}
    \nonumber \\
    &=\frac{1+q^{\frac{1}{2}}}{(q;q)_\infty}\sum_{n=-\infty}^\infty(-1)^nq^{(\frac{3}{2}+2k)\binom{n+1}{2}-\frac{n}{2}}=\frac{1+q^{\frac{1}{2}}}{(q;q)_\infty}\sum_{n=-\infty}^\infty(-1)^nq^{(1+2k)n+(\frac{3}{2}+2k)\binom{n}{2}}
    \nonumber\\
    &=\frac{(1+q^{\frac{1}{2}})(q^{\frac{1}{2}},q^{1+2k},q^{\frac{3}{2}+2k};q^{\frac{3}{2}+2k})_\infty}{(q;q)_\infty}. \qedhere % \label{general-13-odd-2}
\end{align*}
\end{proof}
As a consequence of Theorem \ref{thm-gen-13-odd}, we have the following result which reduces to the last identity in Theorem \ref{thm-13} when $k=2$.
\begin{corollary}\label{cor-gen-13-last}
For $k\geq 1$, we have
\begin{align}
   & \sum_{m,n_1,\dots,n_k\ge 0}\frac{q^{\frac{1}{2}m^2+mn_k+(N_1^2+N_2^2+\cdots+N_k^2)+m+N_1+\cdots+N_k}}{(q;q)_{m}(q;q)_{n_1}\cdots(q;q)_{n_{k-1}}(q^2;q^2)_{n_k}} \nonumber \\
&    =
    \frac{(-q^{\frac{1}{2}};q)_\infty(q^{\frac{1}{2}},q^{1+2k},q^{\frac{3}{2}+2k};q^{\frac{3}{2}+2k})_\infty}{(q;q)_\infty}. \label{ex13-general-0.5}
\end{align}
\end{corollary}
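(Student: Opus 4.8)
The plan is to mirror the derivation of Corollary~\ref{cor-gen-13} from Theorem~\ref{thm-gen-13-original}, except that the extra linear term in the exponent will steer us toward Theorem~\ref{thm-gen-13-odd} instead. First I would perform the sum over $m$. Since none of $N_1,\dots,N_k$ involves $m$, the only $m$-dependent part of the summand is $q^{\frac{1}{2}m^2+mn_k+m}/(q;q)_m$. Writing $\frac{1}{2}m^2=\binom{m}{2}+\frac{1}{2}m$ gives $q^{\frac{1}{2}m^2+mn_k+m}=q^{\binom{m}{2}}\bigl(q^{\frac{3}{2}+n_k}\bigr)^m$, so by the second Euler identity in \eqref{euler} the inner sum evaluates to $\sum_{m\ge 0}q^{\binom{m}{2}}\bigl(q^{\frac{3}{2}+n_k}\bigr)^m/(q;q)_m=(-q^{\frac{3}{2}+n_k};q)_\infty$.

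Next I would extract the factor independent of $n_k$ via the Pochhammer shift $(-q^{\frac{3}{2}+n_k};q)_\infty=(-q^{\frac{1}{2}};q)_\infty/(-q^{\frac{1}{2}};q)_{n_k+1}$. After pulling $(-q^{\frac{1}{2}};q)_\infty$ outside the sum, the remaining multisum has denominator $(q;q)_{n_1}\cdots(q;q)_{n_{k-1}}(-q^{\frac{1}{2}};q)_{n_k+1}(q^2;q^2)_{n_k}$ and numerator $q^{N_1^2+\cdots+N_k^2+N_1+\cdots+N_k}$, which is precisely the left-hand side of Theorem~\ref{thm-gen-13-odd}. Invoking that theorem then yields the claimed product, completing the proof.

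There is no substantial obstacle here; the entire argument is bookkeeping. The one point worth emphasizing is why we land on Theorem~\ref{thm-gen-13-odd} rather than the $i=1$ instance of Theorem~\ref{thm-gen-13-original}: the additional $+m$ in the exponent (absent in Corollary~\ref{cor-gen-13}) converts the reciprocal $(-q^{\frac{3}{2}+n_k};q)_\infty$, which would otherwise leave behind $(-q^{\frac{1}{2}};q)_{n_k}$, into the factor $(-q^{\frac{1}{2}};q)_{n_k+1}$ that distinguishes Theorem~\ref{thm-gen-13-odd}. Thus it is the precise shape of the linear exponent that selects the correct auxiliary identity, and once this matching is observed the corollary is immediate.
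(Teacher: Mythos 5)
Your proposal is correct and follows essentially the same route as the paper: sum over $m$ via Euler's identity to produce $(-q^{\frac{3}{2}+n_k};q)_\infty$, rewrite this as $(-q^{\frac{1}{2}};q)_\infty/(-q^{\frac{1}{2}};q)_{n_k+1}$, and then invoke Theorem \ref{thm-gen-13-odd}. Your closing remark correctly identifies the role of the extra $+m$ in selecting the $(-q^{\frac{1}{2}};q)_{n_k+1}$ factor, which is exactly the mechanism in the paper's proof.
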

\begin{proof}
After summing over $m$ first, we have
\begin{align}\label{general-13-odd-2-start}
   & \sum_{m,n_1,\dots,n_k\ge 0}\frac{q^{\frac{1}{2}m^2+mn_k+(N_1^2+N_2^2+\cdots+N_k^2)+m+N_1+\cdots+N_k}}{(q;q)_{m}(q;q)_{n_1}\cdots(q;q)_{n_{k-1}}(q^2;q^2)_{n_k}} \nonumber \\
&    =\sum_{n_1,\dots,n_k\geq 0} (-q^{n_k+\frac{3}{2}};q)_\infty \frac{q^{N_1^2+N_2^2+\cdots+N_k^2+N_1+\cdots+N_k}}{(q;q)_{m}(q;q)_{n_1}\cdots(q;q)_{n_{k-1}}(q^2;q^2)_{n_k}} \nonumber \\
&    =(-q^{\frac{1}{2}};q)_\infty \sum_{n_1,\dots,n_k\geq 0} \frac{q^{N_1^2+N_2^2+\cdots+N_k^2+N_1+\cdots+N_k}}{(q;q)_{n_1}\cdots(q;q)_{n_{k-1}}(-q^{\frac{1}{2}};q)_{n_k+1}(q^2;q^2)_{n_k}}.
\end{align}
The desired identity then follows from Theorem \ref{thm-gen-13-odd}.
\end{proof}

Corollaries \ref{cor-gen-13} and \ref{cor-gen-13-last} give a complete generalization of Theorem \ref{thm-13}.

\subsection{Generalizations from known identities}
As we have seen in Section \ref{sec-exam}, Euler's identities \eqref{euler}, Lemmas \ref{lem-12} and \ref{lem-13} can be used to reduce some rank three Nahm sums to rank two Nahm sums. For example, we used Lemma \ref{lem-12} in Examples 3, 6, 9, 10, 14, 17, 18 and 19, and we used Lemma \ref{lem-13} in Example 15 so that the triple sums are converted to double sums.

If we use Euler's identities \eqref{euler}, Lemmas \ref{lem-12} and \ref{lem-13} in another direction, we can also express a rank $r$ Nahm sum as a rank $r+1$ Nahm sum.  Hence we can get modular rank $r+1$ Nahm sums from modular rank $r$ Nahm sums. Below we discuss two sets of examples to demonstrate this process.

\subsubsection{Generalization from the Andrews-Gordon identity}
If we start with the rank $k$ Nahm sum in the Andrews--Gordon identity \eqref{AG}, we can get many higher rank Nahm sum identities. Here we list several such examples which generalize some aforementioned identities.

\begin{theorem}\label{thm-gen-5-8}%example 5
$(1)$ (Generalization of Theorem $\ref{thm-5}$.) For $k\ge 1, 1 \le i \le k+1$, we have
\begin{align}\label{example5-general}
    &\sum_{m,n_1,\dots,n_k\ge 0}\frac{q^{\binom{m+1}{2}+mn_1+2N_1^2+\cdots+2N_k^2+2N_i+\cdots+2N_k}}{(q;q)_{m}(q;q)_{n_1}(q^2;q^2)_{n_2}\cdots(q^2;q^2)_{n_k}}
    =
    \frac{(q^{2i},q^{4k+6-2i},q^{4k+6};q^{4k+6})_\infty}{(q;q)_\infty}.
\end{align}
$(2)$ (Generalization of Theorem $\ref{thm-8}$.) For $k\ge 1, 1 \le i \le k+1$, we have
\begin{align}
    \sum_{m,n_1,\dots,n_k \geq 0}\frac{q^{\binom{m+1}{2}+mn_k+2N_1^2+\cdots+2N_k^2+2N_i+\cdots+2N_k}}{(q;q)_m(q^2;q^2)_{n_1}\cdots(q^2;q^2)_{n_{k-1}}(q;q)_{n_k}}
    =
    \frac{(q^{2i},q^{4k+6-2i},q^{4k+6};q^{4k+6})_\infty}{(q;q)_\infty}.
\end{align}
\end{theorem}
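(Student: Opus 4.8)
The plan is to prove both parts at once, since after summing over the auxiliary variable $m$ they collapse to the same object. Write $\binom{m+1}{2}=\binom{m}{2}+m$, so that the portion of the summand depending on $m$ is $q^{\binom{m}{2}}(q^{1+n_1})^m/(q;q)_m$ in Part $(1)$ and $q^{\binom{m}{2}}(q^{1+n_k})^m/(q;q)_m$ in Part $(2)$; the only difference is whether the cross term attaches $m$ to $n_1$ or to $n_k$. First I would sum over $m$ using the second Euler identity in \eqref{euler}, namely $\sum_{m\ge 0}q^{\binom{m}{2}}z^m/(q;q)_m=(-z;q)_\infty$, taking $z=q^{1+n_1}$ (resp. $z=q^{1+n_k}$). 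This produces the factor $(-q^{1+n_j};q)_\infty$, with $j=1$ in Part $(1)$ and $j=k$ in Part $(2)$.

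Next I would write $(-q^{1+n_j};q)_\infty=(-q;q)_\infty/(-q;q)_{n_j}$ and absorb $(-q;q)_{n_j}$ into the surviving denominator factor $(q;q)_{n_j}$ via $(q;q)_{n_j}(-q;q)_{n_j}=(q^2;q^2)_{n_j}$. The reason the base-$q$ symbol $(q;q)_{n_j}$ (rather than $(q^2;q^2)_{n_j}$) is attached precisely to the index carrying the cross term $mn_j$ is exactly that this absorption upgrades \emph{every} remaining denominator factor to a base-$q^2$ symbol. After pulling out the constant $(-q;q)_\infty$, the residual sum is
\begin{align*}
(-q;q)_\infty\sum_{n_1,\dots,n_k\ge 0}\frac{q^{2(N_1^2+\cdots+N_k^2)+2(N_i+\cdots+N_k)}}{(q^2;q^2)_{n_1}\cdots(q^2;q^2)_{n_k}},
\end{align*}
which is identical in both parts.

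The residual sum is then recognized as the Andrews--Gordon identity \eqref{AG} with parameter $k+1$ (so that its $k$ summation indices match ours) and with $q$ replaced by $q^2$; its modulus $2(k+1)+1=2k+3$ becomes $4k+6$ after the dilation, so it evaluates to $(q^{2i},q^{4k+6-2i},q^{4k+6};q^{4k+6})_\infty/(q^2;q^2)_\infty$ for $1\le i\le k+1$. Finally I would simplify the prefactor through $(-q;q)_\infty(q;q)_\infty=(q^2;q^2)_\infty$, i.e. $(-q;q)_\infty/(q^2;q^2)_\infty=1/(q;q)_\infty$, which delivers exactly the stated right-hand side.

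I do not expect a serious obstacle: the argument is a single application of Euler's identity followed by a dilated instance of Andrews--Gordon, and the only genuine bookkeeping is to track the parameter shift $k\mapsto k+1$ together with the substitution $q\mapsto q^2$ so that the target modulus $4k+6$ emerges correctly. The one point needing care is the boundary value $k=1$, where the parameter of \eqref{AG} equals $2$ and the Andrews--Gordon identity degenerates; there I would instead invoke the Rogers--Ramanujan identities \eqref{R.R.1}--\eqref{R.R.2} directly (at base $q^2$), which cover $i=1,2$ and complete the argument in that case.
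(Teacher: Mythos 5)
Your argument is correct and is precisely the paper's own proof (the paper merely states ``Summing over $m$ first, the two identities reduce to the Andrews--Gordon identity''): Euler's identity converts the $m$-sum into $(-q^{1+n_j};q)_\infty$, the factor $(-q;q)_{n_j}$ upgrades $(q;q)_{n_j}$ to $(q^2;q^2)_{n_j}$, and the residual sum is \eqref{AG} with parameter $k+1$ at base $q^2$. Your extra remark about handling $k=1$ via \eqref{R.R.1}--\eqref{R.R.2} is a sensible precaution given that the paper states \eqref{AG} only for $k>2$, but it does not change the approach.
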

Summing over $m$ first, the two identities reduce to the Andrews-Gordon identity \eqref{AG}.

\begin{theorem}\label{thm-gen-1}%example1
(Generalization of Theorem $\ref{thm-1}$.) For $k\ge 1, 1 \le i \le k+1$, we have
\begin{align}\label{example1-general}
    &\sum_{m_1,m_2,n_1,\dots,n_k\ge 0}\frac{q^{\binom{m_1+1}{2}+m_1m_2+2\binom{m_2+1}{2}+2m_2n_1+4N_1^2+\cdots+4N_k^2+4N_i+\cdots+4N_k}}{(q;q)_{m_1}(q;q)_{m_2}(q^2;q^2)_{n_1}(q^4;q^4)_{n_2}\cdots(q^4;q^4)_{n_k}}
    \nonumber \\
    &=\frac{(q^{4i},q^{8k+12-4i},q^{8k+12};q^{8k+12})_\infty}{(q;q)_\infty}.
\end{align}
\end{theorem}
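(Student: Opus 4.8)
The plan is to peel off the outermost variable $m_1$ with Euler's identity and thereby reduce the whole $(k+2)$-fold sum to the $(k+1)$-fold identity already established in Theorem \ref{thm-gen-5-8}(1), after the rescaling $q\mapsto q^2$. This is the natural continuation of the ``rank $r \to$ rank $r+1$'' mechanism described above: Theorem \ref{thm-gen-5-8}(1) already carries one auxiliary variable $m$, and here I strip away one further auxiliary variable $m_1$ so as to fall back onto it.

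First I would sum over $m_1$. Writing $\binom{m_1+1}{2}=\binom{m_1}{2}+m_1$, the $m_1$-dependent part of the exponent is $\binom{m_1}{2}+m_1(1+m_2)$, so the second identity in \eqref{euler} yields
\[
\sum_{m_1\ge 0}\frac{q^{\binom{m_1}{2}}\bigl(q^{1+m_2}\bigr)^{m_1}}{(q;q)_{m_1}}=(-q^{1+m_2};q)_\infty=\frac{(-q;q)_\infty}{(-q;q)_{m_2}}.
\]
Substituting this and using $(q;q)_{m_2}(-q;q)_{m_2}=(q^2;q^2)_{m_2}$ to absorb the factor $(q;q)_{m_2}$ in the denominator, the sum equals $(-q;q)_\infty$ times
\[
\sum_{m_2,n_1,\dots,n_k\ge 0}\frac{q^{m_2^2+m_2+2m_2 n_1+4N_1^2+\cdots+4N_k^2+4N_i+\cdots+4N_k}}{(q^2;q^2)_{m_2}(q^2;q^2)_{n_1}(q^4;q^4)_{n_2}\cdots(q^4;q^4)_{n_k}}.
\]
I would then recognize this remaining sum as exactly the left-hand side of Theorem \ref{thm-gen-5-8}(1) under $q\mapsto q^2$ with the dummy variable $m$ renamed $m_2$: indeed $q^{m_2^2+m_2}=(q^2)^{\binom{m_2+1}{2}}$, $q^{2m_2 n_1}=(q^2)^{m_2 n_1}$, $q^{4N_j^2}=(q^2)^{2N_j^2}$, $q^{4N_j}=(q^2)^{2N_j}$, and $(q^4;q^4)_{n_j}=((q^2)^2;(q^2)^2)_{n_j}$, so every factor matches. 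Theorem \ref{thm-gen-5-8}(1) with $q\mapsto q^2$ then evaluates it as $(q^{4i},q^{8k+12-4i},q^{8k+12};q^{8k+12})_\infty/(q^2;q^2)_\infty$, and multiplying by the prefactor $(-q;q)_\infty=(q^2;q^2)_\infty/(q;q)_\infty$ cancels the $(q^2;q^2)_\infty$ and produces the asserted product side.

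The argument is essentially mechanical once the reduction is in place, so there is no serious conceptual obstacle; the only point requiring genuine care is the bookkeeping under $q\mapsto q^2$, namely checking that the mixed term $2m_2 n_1$, the weight $2\binom{m_2+1}{2}$, and the two distinct bases $q^2$ and $q^4$ appearing in the denominators all rescale consistently to the left-hand side of Theorem \ref{thm-gen-5-8}(1). If one prefers a self-contained derivation that avoids citing Theorem \ref{thm-gen-5-8}, the same Euler summation can be applied a second time to $m_2$ (now in base $q^2$), which collapses the expression directly onto the $k$-fold Andrews--Gordon identity \eqref{AG} with $q$ replaced by $q^4$; both routes land on the same modulus $8k+12=4(2k+3)$ product.
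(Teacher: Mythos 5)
Your proposal is correct and follows essentially the same route as the paper: sum over $m_1$ via Euler's identity to produce $(-q^{1+m_2};q)_\infty=(-q;q)_\infty/(-q;q)_{m_2}$, absorb $(-q;q)_{m_2}$ into the denominator to get $(q^2;q^2)_{m_2}$, and recognize the remaining sum as \eqref{example5-general} with $q\mapsto q^2$. The bookkeeping you flag all checks out, and your alternative of applying Euler once more to $m_2$ to land on \eqref{AG} with $q\mapsto q^4$ is also valid but not needed.
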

\begin{proof}
After summing over $m_1$, we have by \eqref{example5-general} that
\begin{align*}
    &\sum_{m_1,m_2,n_1,\dots,n_k\ge 0}\frac{q^{\binom{m_1+1}{2}+m_2m_1+2\binom{m_2+1}{2}+2m_2n_1+4N_1^2+\cdots+4N_k^2+4N_i+\cdots+4N_k}}{(q;q)_{m_1}(q;q)_{m_2}(q^2;q^2)_{n_1}(q^4;q^4)_{n_2}\cdots(q^4;q^4)_{n_k}}\nonumber \\
    &=\sum_{m_2,n_1,\dots,n_k\ge 0}\frac{(-q^{1+m_2};q)_\infty q^{2\binom{m_2+1}{2}+2m_2n_1+4N_1^2+\cdots+4N_k^2+4N_i+\cdots+4N_k}}{(q;q)_{m_2}(q^2;q^2)_{n_1}(q^4;q^4)_{n_2}\cdots(q^4;q^4)_{n_k}} \nonumber
    \\
    &=(-q;q)_\infty\sum_{m_2,n_1,\cdots,n_k\ge 0}\frac{q^{2\binom{m_2+1}{2}+2m_2n_1+4N_1^2+\cdots+4N_k^2+4N_i+\cdots+4N_k}}{(q^2;q^2)_{m_2}(q^2;q^2)_{n_1}(q^4;q^4)_{n_2}\cdots(q^4;q^4)_{n_k}} \nonumber
    \\
    &=\frac{(q^{4i},q^{8k+12-4i},q^{8k+12};q^{8k+12})_\infty}{(q;q)_\infty}. \qedhere
\end{align*}
\end{proof}

\begin{theorem}\label{thm-gen-6}%example 6
(Generalization of Theorem $\ref{thm-6}$.) For $k\ge 1, 1 \le i \le k+1$, we have
\begin{align} \label{ex6-general}
    &\sum_{m,n_{1,1},n_{1,2},n_2,\dots,n_k\geq 0}\frac{q^{\binom{m+1}{2}+mn_1+\binom{n_{1,1}}{2}+2N_1^2+\cdots+2N_k^2+2N_{i}+\cdots+2N_{k}}}{(q;q)_m(q;q)_{n_{1,1}}(q^2;q^2)_{n_{1,2}}(q^2;q^2)_{n_2}\cdots(q^2;q^2)_{n_k}} \nonumber \\
    &=\frac{(q^{2i},q^{4k+6-2i},q^{4k+6};q^{4k+6})_\infty}{(q;q)_\infty}
\end{align}
where $n_1:=n_{1,1}+2n_{1,2}$.
\end{theorem}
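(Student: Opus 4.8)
The plan is to recognize the identity \eqref{ex6-general} as a ``split-index'' refinement of \eqref{example5-general}, obtained by breaking the single summation variable $n_1$ into the pair $(n_{1,1},n_{1,2})$ through the consequence of Lemma \ref{lem-12}. Comparing the two summands, the exponents agree except for the extra factor $q^{\binom{n_{1,1}}{2}}$, and the denominators differ only in that $(q;q)_{n_1}$ in \eqref{example5-general} is replaced by $(q;q)_{n_{1,1}}(q^2;q^2)_{n_{1,2}}$ in \eqref{ex6-general}.

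First I would isolate the $n_1$-dependence of \eqref{example5-general}. Since $N_1=n_1+n_2+\cdots+n_k$ while $N_2,\dots,N_k$ do not involve $n_1$, the entire exponent $\binom{m+1}{2}+mn_1+2N_1^2+\cdots+2N_k^2+2N_i+\cdots+2N_k$ depends on $n_1$ only through the value $n_1$ itself, and the sole factor carrying $n_1$ in the summand is $1/(q;q)_{n_1}$. Hence, holding $m,n_2,\dots,n_k$ fixed, I would apply the consequence of Lemma \ref{lem-12}, namely $\frac{1}{(q;q)_{n_1}}=\sum_{n_{1,1}+2n_{1,2}=n_1}\frac{q^{\binom{n_{1,1}}{2}}}{(q;q)_{n_{1,1}}(q^2;q^2)_{n_{1,2}}}$, to each term of \eqref{example5-general}. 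This substitution introduces the summation over $n_{1,1},n_{1,2}$ with $n_1=n_{1,1}+2n_{1,2}$, produces the factor $q^{\binom{n_{1,1}}{2}}$, and turns the left-hand side of \eqref{example5-general} into exactly the left-hand side of \eqref{ex6-general}, leaving the product on the right untouched. Equivalently, running the argument in the forward direction, the inner sum over $n_{1,1},n_{1,2}$ in \eqref{ex6-general} collapses back to $1/(q;q)_{n_1}$, reducing the claim to \eqref{example5-general}, which is Theorem \ref{thm-gen-5-8}(1).

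There is no real obstacle here; the only step requiring care is verifying that every summand factor other than $q^{\binom{n_{1,1}}{2}}$, $(q;q)_{n_{1,1}}$, and $(q^2;q^2)_{n_{1,2}}$ is a function of $n_1=n_{1,1}+2n_{1,2}$ alone, so that the inner sum over the split indices factors out cleanly. Once this is confirmed, Lemma \ref{lem-12} applies verbatim and the identity follows immediately from Theorem \ref{thm-gen-5-8}(1).
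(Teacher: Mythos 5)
Your proposal is correct and matches the paper's proof: both group the triple of split indices by $n_1=n_{1,1}+2n_{1,2}$, collapse the inner sum via the consequence of Lemma \ref{lem-12} (noting $\binom{n_{1,1}}{2}=n_{1,1}(n_{1,1}-1)/2$ matches the lemma's exponent), and then invoke \eqref{example5-general}. Your check that every other factor depends on $n_{1,1},n_{1,2}$ only through $n_1$ is exactly the point that makes the reduction work.
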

\begin{proof}
 We have by Lemma \ref{lem-12} and \eqref{example5-general} that
 \begin{align*}
    &\sum_{m,n_{1,1},n_{1,2},n_2,\dots,n_k\geq 0}\frac{q^{\binom{m+1}{2}+mn_1+\binom{n_{1,1}}{2}+2N_1^2+\cdots+2N_k^2+2N_{i}+\cdots+2N_{k}}}{(q;q)_m(q;q)_{n_{1,1}}(q^2;q^2)_{n_{1,2}}(q^2;q^2)_{n_2}\cdots(q^2;q^2)_{n_k}} \nonumber \\
     &=\sum_{m,n_1,\dots,n_k\ge 0}\frac{q^{\binom{m+1}{2}+mn_1+2N_1^2+\cdots+2N_k^2+2N_i+\cdots+2N_k}}{(q;q)_m(q^2;q^2)_{n_2}\cdots(q^2;q^2)_{n_k}}\sum_{n_{1,1}+2n_{1,2}=n_1}\frac{q^{\binom{n_{1,1}}{2}}}{(q;q)_{n_{1,1}}(q^2;q^2)_{n_{1,2}}}
     \nonumber\\
     &=\sum_{m,n_1,\dots,n_k\ge 0}\frac{q^{\binom{m+1}{2}+mn_1+2N_1^2+\cdots+2N_k^2+2N_i+\cdots+2N_k}}{(q;q)_{m}(q;q)_{n_1}(q^2;q^2)_{n_2}\cdots(q^2;q^2)_{n_k}} \nonumber \\
    &=
    \frac{(q^{2i},q^{4k+6-2i},q^{4k+6};q^{4k+6}_\infty)}{(q;q)_\infty}. \qedhere
 \end{align*}
\end{proof}

\begin{theorem}\label{thm-gen-7} %example 7
(Generalization of Theorem $\ref{thm-7}$.) For $k\ge 1, 1 \le i \le k+1$, we have
\begin{align} \label{ex7-general}
    &\sum_{m_1,m_2,n_1,\dots,n_k\geq 0}\frac{q^{\binom{m_1+1}{2}+m_1n_1+2\binom{m_2+1}{2}+2m_2n_1+4N_1^2+\cdots+4N_k^2+4N_i+\cdots+4N_k}}{(q;q)_{m_1}(q^2;q^2)_{m_2}(q;q)_{n_1}(q^4;q^4)_{n_2}\cdots(q^4;q^4)_{n_k}} \nonumber \\
    &=\frac{(q^{4i},q^{8k+12-4i},q^{8k+12};q^{8k+12})_\infty}{(q;q)_\infty}.
\end{align}
\end{theorem}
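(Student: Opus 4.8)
The plan is to sum over $m_1$ first by Euler's identity \eqref{euler} and thereby reduce the sum to the one already evaluated in the proof of Theorem \ref{thm-gen-1}. Indeed, the starting denominators here are arranged so that the generalizations of Theorems \ref{thm-1} and \ref{thm-7} are dual: in Theorem \ref{thm-gen-1} the index $m_1$ couples to $m_2$, whereas here it couples to $n_1$, but after the $m_1$-summation both collapse to the same multisum.

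First I would isolate the $m_1$-dependent factor. Writing $\binom{m_1+1}{2}=\binom{m_1}{2}+m_1$, the exponent contributes $q^{\binom{m_1}{2}}(q^{1+n_1})^{m_1}$, so the second identity in \eqref{euler} gives
\begin{align*}
\sum_{m_1\ge 0}\frac{q^{\binom{m_1+1}{2}+m_1n_1}}{(q;q)_{m_1}}=(-q^{1+n_1};q)_\infty=\frac{(-q;q)_\infty}{(-q;q)_{n_1}}.
\end{align*}
Combining the freed factor $1/(-q;q)_{n_1}$ with the $1/(q;q)_{n_1}$ already present, and using $(q;q)_{n_1}(-q;q)_{n_1}=(q^2;q^2)_{n_1}$, the whole sum collapses to
\begin{align*}
(-q;q)_\infty\sum_{m_2,n_1,\dots,n_k\ge 0}\frac{q^{2\binom{m_2+1}{2}+2m_2n_1+4N_1^2+\cdots+4N_k^2+4N_i+\cdots+4N_k}}{(q^2;q^2)_{m_2}(q^2;q^2)_{n_1}(q^4;q^4)_{n_2}\cdots(q^4;q^4)_{n_k}}.
\end{align*}

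This is exactly the intermediate sum reached after summing over $m_1$ in the proof of Theorem \ref{thm-gen-1}, so I would finish identically: the remaining multisum is \eqref{example5-general} with $q$ replaced by $q^2$ (all indices and moduli doubling), which evaluates it as $(q^{4i},q^{8k+12-4i},q^{8k+12};q^{8k+12})_\infty/(q^2;q^2)_\infty$; clearing the prefactor via $(-q;q)_\infty=(q^2;q^2)_\infty/(q;q)_\infty$ then yields the claimed right-hand side.

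I do not anticipate a real obstacle; the only delicate point is the bookkeeping that the $m_1$-summation merges $(q;q)_{n_1}$ and the freed $(-q;q)_{n_1}$ into a single $(q^2;q^2)_{n_1}$, so that the reduced sum matches \eqref{example5-general} at base $q^2$ term by term — in particular the cross term $2m_2n_1$ and the quadratic form $4N_1^2+\cdots+4N_k^2$ must line up. Once that is confirmed the identity is immediate.
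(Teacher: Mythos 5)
Your proof is correct and follows exactly the route the paper intends: the paper's entire proof of this theorem is the one-line remark "After summing over $m_1$, this reduces to \eqref{example5-general}," and your computation — freeing $(-q^{1+n_1};q)_\infty=(-q;q)_\infty/(-q;q)_{n_1}$, merging $(q;q)_{n_1}(-q;q)_{n_1}=(q^2;q^2)_{n_1}$, and invoking \eqref{example5-general} at base $q^2$ — is precisely that reduction, carried out correctly. Your observation that the resulting intermediate multisum coincides with the one in the proof of Theorem \ref{thm-gen-1} is also accurate.
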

After summing over $m_1$, this reduces to \eqref{example5-general}.

\begin{theorem}\label{thm-gen-10}%example 10
(Generalization of Theorem $\ref{thm-10}$.) For $k\ge 1, 1 \le i \le k+1$, we have
\begin{align}\label{ex10-general}
    &\sum_{m_1,m_2,n_1,\dots,n_k\ge 0}\frac{q^{\binom{m_1}{2}+\binom{m_1+2m_2+1}{2}+(m_1+2m_2)n_1+2N_1^2+\cdots+2N_k^2+2N_i+\cdots+2N_k}}{(q;q)_{m_1}(q^2;q^2)_{m_2}(q;q)_{n_1}(q^2;q^2)_{n_2}\cdots(q^2;q^2)_{n_{k}}}\nonumber \\
    &=
    \frac{(q^{2i},q^{4k+6-2i},q^{4k+6};q^{4k+6})_\infty}{(q;q)_\infty}.
\end{align}
\end{theorem}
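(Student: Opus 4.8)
The plan is to reduce the quadruple sum over $m_1,m_2,n_1,\dots,n_k$ to the sum in \eqref{example5-general} by collapsing the pair $(m_1,m_2)$ into a single index via Lemma \ref{lem-12}. The key observation is that upon setting $m:=m_1+2m_2$, the two awkward pieces of the exponent combine cleanly: $\binom{m_1+2m_2+1}{2}=\binom{m+1}{2}$ and $(m_1+2m_2)n_1=mn_1$, so the full exponent becomes
$$\binom{m_1}{2}+\binom{m+1}{2}+mn_1+2N_1^2+\cdots+2N_k^2+2N_i+\cdots+2N_k.$$
Only the term $\binom{m_1}{2}$, together with the factor $(q;q)_{m_1}(q^2;q^2)_{m_2}$ in the denominator, still depends on the individual values of $m_1$ and $m_2$; everything else depends on $(m_1,m_2)$ only through $m$.

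First I would group the summation according to the value of $m=m_1+2m_2$, rewriting the left-hand side as
$$\sum_{m,n_1,\dots,n_k\ge 0}\frac{q^{\binom{m+1}{2}+mn_1+2N_1^2+\cdots+2N_k^2+2N_i+\cdots+2N_k}}{(q;q)_{n_1}(q^2;q^2)_{n_2}\cdots(q^2;q^2)_{n_k}}\Bigg(\sum_{m_1+2m_2=m}\frac{q^{\binom{m_1}{2}}}{(q;q)_{m_1}(q^2;q^2)_{m_2}}\Bigg).$$
Then I would apply the consequence of Lemma \ref{lem-12}, namely $\sum_{i+2j=n}q^{i(i-1)/2}/\big((q;q)_i(q^2;q^2)_j\big)=1/(q;q)_n$ with $(i,j,n)=(m_1,m_2,m)$ and $i(i-1)/2=\binom{m_1}{2}$, to evaluate the inner sum as $1/(q;q)_m$. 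After this substitution the quadruple sum becomes exactly the left-hand side of \eqref{example5-general}, and the asserted product then follows at once from Theorem \ref{thm-gen-5-8}(1).

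As for difficulty, there is essentially no analytic obstacle: the content of the theorem is simply that the two-parameter deformation $(m_1,m_2)$ of the single index $m$ appearing in \eqref{example5-general} is precisely the one trivialized by Lemma \ref{lem-12}. The only point requiring care is bookkeeping, namely confirming that the exponent contributes $\binom{m_1}{2}$ (and not some other quadratic in $m_1$), so that the hypothesis of Lemma \ref{lem-12} is met on the nose, and checking that the $m$-dependent exponent left behind matches \eqref{example5-general} term by term. Since this mirrors the reductions already carried out in the proofs of Theorems \ref{thm-gen-6} and \ref{thm-gen-7}, I would present it in the same concise style.
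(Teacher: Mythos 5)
Your proposal is correct and matches the paper's intended argument: the paper proves Theorem \ref{thm-gen-10} exactly by collapsing $(m_1,m_2)$ into $m=m_1+2m_2$ via Lemma \ref{lem-12} and then invoking \eqref{example5-general}, in the same manner as Theorem \ref{thm-gen-6}. Your bookkeeping of the exponent (isolating $\binom{m_1}{2}$ so that the inner sum evaluates to $1/(q;q)_m$) is exactly the verification required.
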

This can be proved analogously to Theorem \ref{thm-gen-6} using Lemma \ref{lem-12} and \eqref{example5-general}.

\begin{theorem}\label{thm-gen-14-17}%ex14
$(1)$ (Generalization of Theorem $\ref{thm-14}$.) For $k\ge 1, 1 \le i \le k+1$, we have
\begin{align}
    \sum_{n_1,\dots,n_{k-1},n_{k,1},n_{k,2}\geq 0}\frac{q^{\binom{n_{k,1}}{2}+N_1^2+\cdots+N_k^2+N_i+\cdots+N_k}}{(q;q)_{n_1}\cdots(q;q)_{n_{k-1}}(q;q)_{n_{k,1}}(q^2;q^2)_{n_{k,2}}}=\frac{(q^{i},q^{2k+3-i},q^{2k+3};q^{2k+3})_\infty}{(q;q)_\infty}
\end{align}
where $n_k=n_{k,1}+2n_{k,2}$.

$(2)$ (Generalization of Theorem $\ref{thm-17}$.) For $k\ge 1, 1 \le i \le k+1$, we have
\begin{align}\label{ex17-general}
    \sum_{n_{1,1},n_{1,2},n_2,\dots,n_k}\frac{q^{\binom{n_{1.1}}{2}+N_1^2+\cdots+N_k^2+N_i+\cdots+N_k}}{(q;q)_{n_{1,1}}(q^2;q^2)_{n_{1,2}}(q;q)_{n_2}\cdots(q;q)_{n_k}}
    =\frac{(q^i,q^{2k+3-i},q^{2k+3};q^{2k+3})_\infty}{(q;q)_\infty}
\end{align}
where $n_{1}=n_{1,1}+2n_{1,2}$.
\end{theorem}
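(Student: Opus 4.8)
The plan is to reduce each identity to the Andrews--Gordon identity \eqref{AG} by collapsing the split summation indices via Lemma \ref{lem-12}, exactly in the spirit of the proof of Theorem \ref{thm-gen-6}. The key observation is that the product $(q^{i},q^{2k+3-i},q^{2k+3};q^{2k+3})_\infty/(q;q)_\infty$ on the right-hand side is precisely the product side of \eqref{AG} with the Andrews--Gordon parameter taken to be $k+1$: the modulus $2k+1$ there becomes $2(k+1)+1=2k+3$, the number of summation indices becomes $k$, and the numerator exponent $N_1^2+\cdots+N_{k}^2+N_i+\cdots+N_{k}$ matches verbatim under this shift.

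For Part $(1)$ I would fix $n_1,\dots,n_{k-1}$ together with $n_k=n_{k,1}+2n_{k,2}$ and first sum over all pairs $(n_{k,1},n_{k,2})$ with $n_{k,1}+2n_{k,2}=n_k$. Since the numerator exponent depends on these indices only through $N_k=n_k$ and not on how $n_k$ is split, and since $\binom{n_{k,1}}{2}=n_{k,1}(n_{k,1}-1)/2$, the consequence of Lemma \ref{lem-12} gives
\[
\sum_{n_{k,1}+2n_{k,2}=n_k}\frac{q^{\binom{n_{k,1}}{2}}}{(q;q)_{n_{k,1}}(q^2;q^2)_{n_{k,2}}}=\frac{1}{(q;q)_{n_k}}.
\]
This replaces the factor $(q;q)_{n_{k,1}}(q^2;q^2)_{n_{k,2}}$ by $(q;q)_{n_k}$, so the whole multi-sum collapses to $\sum_{n_1,\dots,n_k\ge 0} q^{N_1^2+\cdots+N_k^2+N_i+\cdots+N_k}/\big((q;q)_{n_1}\cdots(q;q)_{n_k}\big)$, which is \eqref{AG} with parameter $k+1$ and hence equals the claimed product.

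For Part $(2)$ the argument is identical, except the split is carried out on the \emph{first} index: I would fix $n_2,\dots,n_k$ and $n_1=n_{1,1}+2n_{1,2}$ and sum over $(n_{1,1},n_{1,2})$. The crucial point is again that only $N_1=n_1+\cdots+n_k$ (and none of $N_2,\dots,N_k$) involves $n_1$, so the numerator exponent is unchanged under the split; Lemma \ref{lem-12} then turns $q^{\binom{n_{1,1}}{2}}/\big((q;q)_{n_{1,1}}(q^2;q^2)_{n_{1,2}}\big)$ into $1/(q;q)_{n_1}$ and we once more recover \eqref{AG} with parameter $k+1$.

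I do not expect a genuine obstacle here beyond bookkeeping the index shift between the statement's $k$ and the Andrews--Gordon parameter $k+1$. The one point needing care is the range of validity of \eqref{AG}, which is stated for parameter exceeding $2$, whereas the parameter $k+1$ can be as small as $2$ (when $k=1$). In that boundary case the collapsed sum is a single sum and the required instance of \eqref{AG} is simply the Rogers--Ramanujan identities \eqref{R.R.1}--\eqref{R.R.2}, so the result holds there as well; for $k\ge 2$ the parameter is at least $3$ and \eqref{AG} applies directly.
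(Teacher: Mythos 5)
Your proof is correct and is exactly the argument the paper intends: the paper dispatches this theorem with the single remark that it ``can be proved analogously to Theorem \ref{thm-gen-6} using Lemma \ref{lem-12} and \eqref{AG}'', which is precisely your collapse of the split index via the consequence of Lemma \ref{lem-12} followed by an appeal to the Andrews--Gordon identity with parameter $k+1$. Your extra attention to the $k=1$ boundary case, where \eqref{AG} degenerates to the Rogers--Ramanujan identities \eqref{R.R.1}--\eqref{R.R.2}, is a detail the paper glosses over but handles nothing differently.
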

This can be proved analogously to Theorem \ref{thm-gen-6} using Lemma \ref{lem-12} and \eqref{AG}.

\begin{theorem}\label{thm-gen-15}%ex15
(Generalization of Theorem $\ref{thm-15}$.) For $k\ge 1, 1 \le i \le k+1$, we have
\begin{align}
    &\sum_{m,n_{1,1},n_{1,2},n_2,\dots,n_k\ge 0}\frac{q^{\binom{m+1}{2}+mn_{1,1}+n_{1,1}^2+n_{1,2}^2-n_{1,1}-\binom{n_1}{2}+N_1^2+\cdots+N_k^2+N_i+\cdots+N_k}}{(q;q)_m(q;q)_{n_{1,1}}(q^2;q^2)_{n_{1,2}}(q;q)_{n_2}\cdots(q;q)_{n_k}}\nonumber \\
    &=\frac{(-q;q)_\infty(q^i,q^{2k+3-i},q^{2k+3};q^{2k+3})_\infty}{(q;q)_\infty}\label{ex15-general-1}
    , \\
    &\sum_{m,n_{1,1},n_{1,2},n_2,\dots,n_k\ge 0}\frac{q^{\binom{m+1}{2}+mn_{1,1}+n_{1,1}^2+n_{1,2}^2-n_{1,2}-\binom{n_1}{2}+N_1^2+\cdots+N_k^2+N_i+\cdots+N_k}}{(q;q)_m(q;q)_{n_{1,1}}(q^2;q^2)_{n_{1,2}}(q;q)_{n_2}\cdots(q;q)_{n_k}}\nonumber \\
    &=\frac{(-q;q)_\infty(q^i,q^{2k+3-i},q^{2k+3};q^{2k+3})_\infty}{(q;q)_\infty},\label{ex15-general-2}
\end{align}
where $n_1=n_{1,1}+n_{1,2}$.
\end{theorem}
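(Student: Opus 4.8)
The plan is to deduce both identities from the Andrews--Gordon identity \eqref{AG} by reversing the reduction used in the proof of Theorem \ref{thm-15}: the extra index $m$ is created by Euler's identity \eqref{euler}, and the split index $n_1=n_{1,1}+n_{1,2}$ is recombined by Lemma \ref{lem-13}. I will treat \eqref{ex15-general-1} first and then observe that \eqref{ex15-general-2} is its mirror image. In \eqref{ex15-general-1}, the only $m$-dependence is the factor $q^{\binom{m+1}{2}+mn_{1,1}}/(q;q)_m$. Writing $\binom{m+1}{2}+mn_{1,1}=\binom{m}{2}+m(n_{1,1}+1)$ and applying the second identity in \eqref{euler}, the sum over $m$ evaluates to $(-q^{1+n_{1,1}};q)_\infty=(-q;q)_\infty/(-q;q)_{n_{1,1}}$. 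This pulls out a global factor $(-q;q)_\infty$ and, using $(q;q)_{n_{1,1}}(-q;q)_{n_{1,1}}=(q^2;q^2)_{n_{1,1}}$, converts the $(q;q)_{n_{1,1}}$ in the denominator into $(q^2;q^2)_{n_{1,1}}$.

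After this step the remaining sum is
\[
(-q;q)_\infty\sum_{n_{1,1},n_{1,2},n_2,\dots,n_k\ge 0}\frac{q^{n_{1,1}^2+n_{1,2}^2-n_{1,1}-\binom{n_1}{2}+N_1^2+\cdots+N_k^2+N_i+\cdots+N_k}}{(q^2;q^2)_{n_{1,1}}(q^2;q^2)_{n_{1,2}}(q;q)_{n_2}\cdots(q;q)_{n_k}},
\]
in which $n_{1,1}$ and $n_{1,2}$ enter the quadratic form only through $n_1=n_{1,1}+n_{1,2}$ inside $N_1=n_1+N_2$. Fixing $n_1,n_2,\dots,n_k$ and summing over $n_{1,1}+n_{1,2}=n_1$, the consequence of Lemma \ref{lem-13} gives $\sum_{n_{1,1}+n_{1,2}=n_1}q^{n_{1,1}^2+n_{1,2}^2-n_{1,1}}/\big((q^2;q^2)_{n_{1,1}}(q^2;q^2)_{n_{1,2}}\big)=q^{\binom{n_1}{2}}/(q;q)_{n_1}$, and the $q^{\binom{n_1}{2}}$ exactly cancels the $q^{-\binom{n_1}{2}}$ already present. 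Thus the double index collapses to a single index with weight $1/(q;q)_{n_1}$, leaving $(-q;q)_\infty$ times the rank-$k$ Andrews--Gordon sum. By \eqref{AG} with $k$ replaced by $k+1$ (so that the modulus is $2k+3$ and $1\le i\le k+1$), this sum equals $(q^i,q^{2k+3-i},q^{2k+3};q^{2k+3})_\infty/(q;q)_\infty$, which proves \eqref{ex15-general-1}. For $k=1$ the base identity is the boundary case of \eqref{AG}, namely the Rogers--Ramanujan identities \eqref{R.R.1}--\eqref{R.R.2}.

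For \eqref{ex15-general-2} the computation is word-for-word the same through the summation over $m$; the surviving exponent is then $n_{1,1}^2+n_{1,2}^2-n_{1,2}-\binom{n_1}{2}$, and I apply the $n_{1,1}\leftrightarrow n_{1,2}$ symmetric form of the consequence of Lemma \ref{lem-13}, which yields the identical value $q^{\binom{n_1}{2}}/(q;q)_{n_1}$. I expect the only delicate part to be the exponent bookkeeping: verifying that the summand's $-\binom{n_1}{2}$ is precisely what absorbs the $q^{\binom{n_1}{2}}$ produced by Lemma \ref{lem-13}, and confirming that every $N_j$ with $j\ge 2$ is untouched throughout since $n_{1,1},n_{1,2}$ affect them only via $n_1=n_{1,1}+n_{1,2}$. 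The empty linear term in the boundary case $i=k+1$ is handled by the convention $N_j=0$ for $j>k$ fixed at the start of the section.
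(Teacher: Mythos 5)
Your proposal is correct and follows essentially the same route as the paper's proof: sum over $m$ via Euler's identity \eqref{euler} to extract $(-q;q)_\infty$ and upgrade $(q;q)_{n_{1,1}}$ to $(q^2;q^2)_{n_{1,1}}$, collapse the pair $(n_{1,1},n_{1,2})$ with the consequence of Lemma \ref{lem-13} so that $q^{\binom{n_1}{2}}$ cancels the $q^{-\binom{n_1}{2}}$, and finish with the Andrews--Gordon identity, handling \eqref{ex15-general-2} by the $n_{1,1}\leftrightarrow n_{1,2}$ symmetry of the inner sum. Your explicit remarks on the $k=1$ boundary case and the exponent bookkeeping only make explicit what the paper leaves implicit.
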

\begin{proof}
Summing over $m$ first, we have by Lemma \ref{lem-13} and \eqref{AG} that
\begin{align*}
    &\sum_{m,n_{1,1},n_{1,2},n_2,\dots,n_k\ge 0}\frac{q^{\binom{m+1}{2}+mn_{1,1}+n_{1,1}^2+n_{1,2}^2-n_{1,1}-\binom{n_1}{2}+N_1^2+\cdots+N_k^2+N_i+\cdots+N_k}}{(q;q)_m(q;q)_{n_{1,1}}(q^2;q^2)_{n_{1,2}}(q;q)_{n_2}\cdots(q;q)_{n_k}}
    \\&=
    \sum_{n_{1,1},n_{1,2},n_2,\dots,n_k\ge 0}\frac{(-q^{1+n_{1,1}};q)_\infty q^{n_{1,1}^2+n_{1,2}^2-n_{1,1}-\binom{n_1}{2}+N_1^2+\cdots+N_k^2+N_i+\cdots+N_k}}{(q;q)_{n_{1,1}}(q^2;q^2)_{n_{1,2}}(q;q)_{n_2}\cdots(q;q)_{n_k}}
    \\
    &=(-q;q)_\infty\sum_{n_1,\dots,n_k\ge 0}\frac{ q^{-\binom{n_1}{2}+N_1^2+\cdots+N_k^2+N_i+\cdots+N_k}}{(q;q)_{n_2}\cdots(q;q)_{n_k}}\sum_{n_{1,1}+n_{1,2}=n_1}\frac{q^{n_{1,1}^2+n_{1,2}^2-n_{1,1}}}{(q^2;q^2)_{n_{1,1}}(q^2;q^2)_{n_{1,2}}}
    \\
    &=(-q;q)_\infty\sum_{n_1,\dots,n_{k}\geq 0} \frac{q^{N_1^2+\cdots+N_{k}^2+N_i+\cdots +N_{k}}}{(q;q)_{n_1}(q;q)_{n_2}\cdots (q;q)_{n_{k-1}} (q;q)_{n_{k}}} \\
    &=\frac{(-q;q)_\infty(q^i,q^{2k+3-i},q^{2k+3};q^{2k+3})_\infty}{(q;q)_\infty}.
\end{align*}
This proves \eqref{ex15-general-1}. Note that the inner sum in the third line does not change after interchanging $n_{1,1}$ and $n_{1,2}$. Hence \eqref{ex15-general-2} holds as well.
\end{proof}

\subsubsection{Generalizations from two identities of Andrews}
Let us discuss one more example regarding generalizations of Theorems \ref{thm-3} and \ref{thm-9}. This time we start with the following identities of Andrews \cite[Theorem 3]{Andrews2010}:
\begin{enumerate}[(1)]
\item For $k\geq a \geq 1$ with $k\equiv a$ (mod 2),
\begin{align}\label{eq-And-1}
    &\sum_{n_1,n_2,\dots,n_{k-1}\geq 0} \frac{q^{N_1^2+N_2^2+\cdots+N_{k-1}^2+2N_a+2N_{a+2}+\cdots+2N_{k-2}}}{(q^2;q^2)_{n_1}\cdots (q^2;q^2)_{n_{k-2}}(q^2;q^2)_{n_{k-1}}} \nonumber \\
    &=\frac{(-q;q^2)_\infty(q^a,q^{2k+2-a},q^{2k+2};q^{2k+2})_\infty}{(q^2;q^2)_\infty}.
\end{align}
\item For $k\geq a\geq 1$ with $k$ odd and $a$ even,
\begin{align}\label{eq-And-2}
    &\sum_{n_1,n_2,\dots,n_{k-1}\geq 0} \frac{q^{N_1^2+N_2^2+\cdots+N_{k-1}^2+n_1+n_3+\cdots+n_{a-3}+N_{a-1}+N_{a}+\cdots+N_{k-1}}}{(q^2;q^2)_{n_1}\cdots (q^2;q^2)_{n_{k-2}}(q^2;q^2)_{n_{k-1}}} \nonumber \\
    &=\frac{(-q^2;q^2)_\infty(q^a,q^{2k+2-a},q^{2k+2};q^{2k+2})_\infty}{(q^2;q^2)_\infty}.
\end{align}
\end{enumerate}
See the works of Kur\c{s}ung\"oz \cite{Kursungoz2010} and Kim and Yee \cite{Kim-Yee} for companion identities of the remaining cases.

Replacing $k$ by $k+1$ in \eqref{eq-And-1} and \eqref{eq-And-2}, and then applying Lemma \ref{lem-12} to express $1/(q^2;q^2)_{n_k}$,  we obtain the following consequence.
\begin{theorem}
Let $n_k=n_{k,1}+2n_{k,2}$. For $k+1\geq a \geq 1$ with $k+1\equiv a$ $(\mathrm{mod}~~ 2)$, we have
\begin{align}
    &\sum_{n_1,\dots,n_{k-1},n_{k,1},n_{k,2}\geq 0} \frac{q^{2\binom{n_{k,1}}{2}+N_1^2+N_2^2+\cdots+N_{k}^2+2N_a+2N_{a+2}+\cdots+2N_{k-1}}}{(q^2;q^2)_{n_1}(q^2;q^2)_{n_2}\cdots (q^2;q^2)_{n_{k-1}}(q^2;q^2)_{n_{k,1}}(q^4;q^4)_{n_{k,2}}} \nonumber \\
     &=\frac{(-q;q^2)_\infty(q^a,q^{2k+4-a},q^{2k+4};q^{2k+4})_\infty}{(q^2;q^2)_\infty}.
\end{align}
For $k+1\geq a\geq 1$ with $k$ even and $a$ even, we have
\begin{align}
      &\sum_{n_1,\dots,n_{k-1},n_{k,1},n_{k,2}\geq 0} \frac{q^{2\binom{n_{k,1}}{2}+N_1^2+N_2^2+\cdots+N_{k}^2+n_1+n_3+\cdots+n_{a-3}+N_{a-1}+N_{a}+\cdots+N_{k}}}{(q^2;q^2)_{n_1}(q^2;q^2)_{n_2}\cdots (q^2;q^2)_{n_{k-2}}(q^2;q^2)_{n_{k-1}}(q^2;q^2)_{n_{k,1}}(q^4;q^4)_{n_{k,2}}} \nonumber \\
    &=\frac{(-q^2;q^2)_\infty(q^a,q^{2k+4-a},q^{2k+4};q^{2k+4})_\infty}{(q^2;q^2)_\infty}
\end{align}
\end{theorem}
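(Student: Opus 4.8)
The plan is to obtain both identities as direct specializations of Andrews's identities \eqref{eq-And-1} and \eqref{eq-And-2}, combined with the splitting afforded by Lemma \ref{lem-12}. No new analytic input is required: the proof is a substitution together with careful bookkeeping of the summation indices and the congruence conditions.

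First I would replace $k$ by $k+1$ throughout \eqref{eq-And-1}. This turns the summation into one over $n_1,\dots,n_k$, the quadratic part becomes $N_1^2+\cdots+N_k^2$, the linear part becomes $2N_a+2N_{a+2}+\cdots+2N_{k-1}$, the modulus $2k+2$ becomes $2k+4$, and the parity condition $k\equiv a$ becomes $k+1\equiv a\pmod 2$; this is exactly the hypothesis of the first asserted identity. The same substitution in \eqref{eq-And-2} produces the left side and modulus of the second asserted identity, with the condition ``$k$ odd, $a$ even'' becoming ``$k+1$ odd, $a$ even'', i.e. ``$k$ even, $a$ even''. At this stage the only factor that still needs to be rewritten is $1/(q^2;q^2)_{n_k}$.

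Next I would apply the consequence of Lemma \ref{lem-12} with $q$ replaced by $q^2$. Since $i(i-1)=2\binom{i}{2}$, that consequence reads
\begin{align*}
\frac{1}{(q^2;q^2)_{n_k}}=\sum_{n_{k,1}+2n_{k,2}=n_k}\frac{q^{2\binom{n_{k,1}}{2}}}{(q^2;q^2)_{n_{k,1}}(q^4;q^4)_{n_{k,2}}}.
\end{align*}
Substituting this for $1/(q^2;q^2)_{n_k}$ and letting $n_{k,1},n_{k,2}$ become free summation variables (with $n_k=n_{k,1}+2n_{k,2}$ understood inside every $N_i$) converts the single index $n_k$ into the pair $(n_{k,1},n_{k,2})$ and inserts the weight $q^{2\binom{n_{k,1}}{2}}$ into the numerator. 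Because each $N_i$ depends on $n_k$ only through the combination $n_{k,1}+2n_{k,2}$, the quadratic and linear exponents are unchanged under this substitution, and the right-hand sides, being independent of the summation variables, are untouched. This yields precisely the two displayed identities.

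The only point requiring care — and the closest thing to an obstacle — is verifying that the linear exponents survive both the shift and the splitting intact. In \eqref{eq-And-2} the terms $n_1+n_3+\cdots+n_{a-3}$ involve only indices with $a-3\le k-2<k$, so they never touch $n_k$ and are unaffected by the splitting, while the tail $N_{a-1}+\cdots+N_k$ is expressed through the $N_i$ and hence transforms automatically. Likewise one should confirm that the parity restrictions line up so that the sums $2N_a+2N_{a+2}+\cdots$ and $n_1+n_3+\cdots$ terminate at the correct index after $k\mapsto k+1$. Once this bookkeeping is checked, no further computation is needed.
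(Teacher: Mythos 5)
Your proposal is correct and is exactly the paper's argument: the paper obtains this theorem by replacing $k$ with $k+1$ in Andrews's identities \eqref{eq-And-1} and \eqref{eq-And-2} and then applying the second part of Lemma \ref{lem-12} (with $q\mapsto q^2$) to expand $1/(q^2;q^2)_{n_k}$ as the sum over $n_{k,1}+2n_{k,2}=n_k$. Your bookkeeping of the parity conditions and of the linear exponents is also the right check, and nothing further is needed.
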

Let $(k,a)=(2,1),(2,2)$ and $(2,3)$. We obtain \eqref{table2.3.5}, \eqref{table2.3.4} and \eqref{table2.3.1}, respectively.

Applying Lemma \ref{lem-12} to express $1/(q^2;q^2)_{n_1}$ in \eqref{eq-And-1} and \eqref{eq-And-2}, we obtain the following consequence.
\begin{theorem}\label{thm-exam9-gen}
Let $n_1=n_{1,1}+2n_{1,2}$. For $k\geq a \geq 1$ with $k\equiv a$ $(\mathrm{mod}~~ 2)$, we have
\begin{align}
    &\sum_{n_{1,1},n_{1,2},n_2,\dots,n_{k-1}\geq 0} \frac{q^{2\binom{n_{1,1}}{2}+N_1^2+N_2^2+\cdots+N_{k-1}^2+2N_a+2N_{a+2}+\cdots+2N_{k-2}}}{(q^2;q^2)_{n_{1,1}}(q^4;q^4)_{n_{1,2}}(q^2;q^2)_{n_2}\cdots (q^2;q^2)_{n_{k-2}}(q^2;q^2)_{n_{k-1}}} \nonumber \\
     &=\frac{(-q;q^2)_\infty(q^a,q^{2k+2-a},q^{2k+2};q^{2k+2})_\infty}{(q^2;q^2)_\infty}.
\end{align}
For $k\geq a\geq 1$ with $k$ odd and $a$ even, we have
\begin{align}
      &\sum_{n_{1,1},n_{1,2},n_2,\dots,n_{k-1}} \frac{q^{2\binom{n_{1,1}}{2}+N_1^2+N_2^2+\cdots+N_{k-1}^2+n_1+n_3+\cdots+n_{a-3}+N_{a-1}+N_{a}+\cdots+N_{k-1}}}{(q^2;q^2)_{n_{1,1}}(q^4;q^4)_{n_{1,2}}(q^2;q^2)_{n_2}\cdots (q^2;q^2)_{n_{k-2}}(q^2;q^2)_{n_{k-1}}} \nonumber \\
    &=\frac{(-q^2;q^2)_\infty(q^a,q^{2k+2-a},q^{2k+2};q^{2k+2})_\infty}{(q^2;q^2)_\infty}.
\end{align}
\end{theorem}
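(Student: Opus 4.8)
The plan is to deduce both identities directly from Andrews' identities \eqref{eq-And-1} and \eqref{eq-And-2}, by re-expressing the factor $1/(q^2;q^2)_{n_1}$ through Lemma \ref{lem-12}, in complete analogy with the proof of Theorem \ref{thm-gen-6}. First I would replace $q$ by $q^2$ in the second (finite) identity of Lemma \ref{lem-12} to obtain
\begin{align*}
\sum_{n_{1,1}+2n_{1,2}=n_1}\frac{q^{2\binom{n_{1,1}}{2}}}{(q^2;q^2)_{n_{1,1}}(q^4;q^4)_{n_{1,2}}}=\frac{1}{(q^2;q^2)_{n_1}}.
\end{align*}
The left-hand side here matches precisely the new summation index $n_{1,2}$, the factor $q^{2\binom{n_{1,1}}{2}}$, and the split denominator $(q^2;q^2)_{n_{1,1}}(q^4;q^4)_{n_{1,2}}$ that appear in both displayed multi-sums of the theorem.

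The essential point is that, apart from this factor and split denominator, every remaining ingredient of each summand depends on the pair $(n_{1,1},n_{1,2})$ only through the single quantity $n_1=n_{1,1}+2n_{1,2}$. Indeed $N_1=n_1+n_2+\cdots+n_{k-1}$ while $N_i$ for $i\ge 2$ does not involve $n_1$ at all, so the quadratic part $N_1^2+\cdots+N_{k-1}^2$ and all the linear terms (the $2N_a+2N_{a+2}+\cdots$ of \eqref{eq-And-1}, and the $n_1+n_3+\cdots+n_{a-3}+N_{a-1}+\cdots+N_{k-1}$ of \eqref{eq-And-2}) are functions of $n_1,n_2,\dots,n_{k-1}$ alone. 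Consequently, fixing $n_1$ and summing the double sum over $(n_{1,1},n_{1,2})$ with $n_{1,1}+2n_{1,2}=n_1$ collapses the extra factor to $1/(q^2;q^2)_{n_1}$, and the remaining multi-sum over $n_1,\dots,n_{k-1}$ is exactly the left-hand side of \eqref{eq-And-1} or \eqref{eq-And-2}.

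With this reduction the two product evaluations follow at once from Andrews' identities. There is no real analytic difficulty; the only step requiring care is the exponent bookkeeping, namely confirming that enlarging the index set from $n_1$ to $(n_{1,1},n_{1,2})$ introduces no $q$-power other than $q^{2\binom{n_{1,1}}{2}}$, and that every occurrence of $n_1$ (in particular the linear $n_1$ appearing in \eqref{eq-And-2} when $a\ge 4$) is consistently read as $n_{1,1}+2n_{1,2}$. Once this is verified, the identity of Lemma \ref{lem-12} does all the work and no further computation is needed.
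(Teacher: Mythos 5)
Your proposal is correct and is exactly the paper's route: the paper likewise obtains Theorem \ref{thm-exam9-gen} by applying Lemma \ref{lem-12} (with $q$ replaced by $q^2$) to rewrite $1/(q^2;q^2)_{n_1}$ in Andrews' identities \eqref{eq-And-1} and \eqref{eq-And-2}, just as in the proof of Theorem \ref{thm-gen-6}. Your observation that every other factor depends on $(n_{1,1},n_{1,2})$ only through $n_1=n_{1,1}+2n_{1,2}$ is precisely the point that makes the substitution legitimate, so nothing is missing.
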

Let $(k,a)=(3,1), (3,2)$ and $(3,3)$. We obtain \eqref{table2.9.8}, \eqref{table2.9.6} and \eqref{table2.9.2}, respectively.

Of course, if we start with some other identities and perform the procedure as above, we can obtain more multi-sum identities. Since this process is routine and the identities we obtained are essentially equivalent to the initial identities, we will not pursue more on this.

\subsection*{Acknowledgements}
This work was supported by the National Natural Science Foundation of China (12171375).

\end{document}